\newcommand{\subjclass}[2][2010]{%
  \let\@oldtitle\@title%
  \gdef\@title{\@oldtitle\footnotetext{#1 \emph{Mathematics subject classification.} #2}}%
}
\newcommand{\Kd}{\mathcal{K}_\delta}
\newcommand{\eps}{\epsilon}
\newcommand{\leqc}{\lesssim}
\newcommand{\grad}{\nabla}
\newcommand{\abs}[1]{\left| #1 \right|}
\newcommand{\set}[1]{\left\{ #1 \right\}}
\newcommand{\brak}[1]{\left\langle #1 \right\rangle} 
\newcommand{\R}{\mathbb{R}}
\newcommand{\C}{\mathbb{C}}
\newcommand{\N}{\mathbb{N}}
\renewcommand{\S}{\mathbb{S}}
\newcommand{\dee}{\mathrm{d}}
\newcommand{\dt}{\dee t}
\newcommand{\dx}{\dee x}
\DeclareMathOperator{\Id}{\mathrm{Id}}
\DeclareMathOperator{\Span}{\mathrm{span}}
\renewcommand{\P}{\mathbf{P}}
\newcommand{\E}{\mathbf{E}}
\newcommand{\B}{\mathbb{B}}
\newcommand{\PP}{\mathbf P}
\newcommand{\Kc}{\mathcal{K}}
\newcommand{\Bc}{\mathcal{B}}
\newcommand{\Ec}{\mathcal{E}}
\newtheorem{theorem}{Theorem}[section]
\newtheorem{proposition}[theorem]{Proposition}
\newtheorem{lemma}[theorem]{Lemma}
\newtheorem*{lemma*}{Lemma}
\newtheorem{claim}[theorem]{Claim}
\newtheorem{assumption}{Assumption}
\theoremstyle{definition}
\newtheorem{definition}[theorem]{Definition}
\newtheorem{remark}[theorem]{Remark}
\numberwithin{equation}{section}
\begin{document}

\title{Existence of stationary measures for partially damped SDEs with generic, Euler-type nonlinearities}
\author{Jacob Bedrossian \and Alex Blumenthal \and Keagan Callis \and Kyle Liss 
} 
\maketitle

\begin{abstract}
We study nonlinear energy transfer and the existence of stationary measures in a class of degenerately forced SDEs on $\R^d$ with a quadratic, conservative nonlinearity $B(x,x)$ constrained to possess various properties common to finite-dimensional fluid models and a linear damping term $-Ax$ that acts only on a proper subset of phase space in the sense that $\mathrm{dim}(\mathrm{ker}A) \gg 1$. Existence of a stationary measure is straightforward if $\mathrm{ker}A = \{0\}$, but when the kernel of $A$ is nontrivial a stationary measure can exist only if the nonlinearity transfers enough energy from the undamped modes to the damped modes. We develop a set of sufficient dynamical conditions on $B$ that guarantees the existence of a stationary measure and prove that they hold ``generically'' within our constraint class of nonlinearities provided that $\mathrm{dim}(\mathrm{ker}A) < 2d/3$ and the stochastic forcing acts directly on at least two degrees of freedom. We also show that the restriction $\mathrm{dim}(\mathrm{ker}A) < 2d/3$ can be removed if one allows the nonlinearity to change by a small amount at discrete times. In particular, for a Markov chain obtained by evolving our SDE on approximately unit random time intervals and slightly perturbing the nonlinearity within our constraint class at each timestep, we prove that there exists a stationary measure whenever just a single mode is damped.
\end{abstract}

\setcounter{tocdepth}{3}
{\small\tableofcontents}

\section{Introduction}
\label{sec:intro}

Many physical phenomena involve the nonlinear, conservative transfer of energy from weakly damped degrees of freedom driven by an external force to other modes that are more strongly damped. 
For example, in hydrodynamic turbulence, energy enters the system primarily at large spatial scales, but at high Reynolds number, dissipative effects are only significant at very high frequencies. Nevertheless, empirical observations suggest that the nonlinearity transfers energy to small scales at a rate that allows statistically stationary solutions to have bounded energy in the infinite Reynolds number limit, with the energy input balanced by a nontrivial flux of energy through arbitrarily small length scales. This is an instance of a phenomenon typically referred to as \textit{anomalous dissipation} and is one of the fundamental predictions of turbulence theory (see e.g. discussions in \cite{Frisch}). While energy cascades and dissipation anomalies have been satisfactorily studied in restricted settings such as passive scalar turbulence \cite{BBPSBatchelor}, linear shell models \cite{Mattingly07,Mattingly07Short}, and some simplified nonlinear models \cite{VicolGlattHoltzFriedlander}, an understanding of such phenomena in realistic, infinite dimensional nonlinear systems seems largely out of reach.
Motivated by discussions found in \cite{Majda16} and \cite{EHS15}, there is a natural (much simpler) analogue in stochastically forced, finite-dimensional models with fluid-like properties, namely, to determine under what conditions the system admits a stationary measure even if only a subset of the degrees of freedom are directly damped.
This can only be possible if the nonlinearity transfers enough energy from the undamped modes to the damped modes.

In this paper, we study the nonlinear transfer of energy and existence of stationary measures in a class of SDEs on $\R^d$ of the form
\begin{align}
\dee x_t = B(x_t,x_t) \dee t - Ax_t \dee t + \sum_{j=1}^d \sigma_j e_j \dee W_t^{(j)}, \label{eq:SDE}
\end{align}
where $\set{e_j}$ denote the canonical basis vectors, $\sigma_j \in \mathbb R$ are fixed coefficients (some of which are allowed to be zero), $A$ is a symmetric positive semi-definite matrix (with $\mathrm{dim}(\mathrm{ker}A) > 0$, representing the number of modes left undamped), and the $\set{W_t^{(j)}}$ are iid Brownian motions on the canonical stochastic basis $(\Omega,\PP,\mathcal{F}, \mathcal{F}_t)$. In many of the motivating infinite dimensional examples the forcing usually acts mainly on scales widely separated from those on which the dissipation is important. With this in mind,  we consider the general case in which many of the $\sigma_j$ may vanish (below we will assume only two coefficients are non-vanishing). Here, $B = B(x,x)$ is a bilinear vector field on $\R^d$ satisfying 
\begin{equation} \label{eq:energycons}
	x \cdot B(x,x) = 0 \quad \forall x \in \R^d
\end{equation}
along with some additional constraints to be specified below (see Definition~\ref{def:CCI}). The property \eqref{eq:energycons} implies that $B$ is conservative in the sense that solutions to the ODE $\dot{x} = B(x,x)$ conserve the energy $x \mapsto |x|^2$. Our choice of nonlinearity is primarily motivated by fluid mechanics, as most finite-dimensional approximations of the incompressible Euler equations yield such a bilinear nonlinearity. The class contains several other classical fluid mechanics models such as the shell models GOY \cite{YamadaOhkitani87,Gledzer1973} and Sabra \cite{SABRA}, in addition to the well-studied Lorenz 96 model \cite{Lorenz96}, used frequently as a standard benchmark for simulating chaotic dynamics. 

The first and fourth authors studied this problem recently in \cite{BedrossianLiss22}, finding a variety of sufficient conditions on $B$ under which \eqref{eq:SDE} would admit stationary measures. The conditions could be verified for a handful of examples with $\mathrm{dim}(\mathrm{ker}A)$ relatively small, but were not suitable for treating cases of \eqref{eq:SDE} with $\mathrm{dim}(\mathrm{ker}A) \gg 1$. In this paper, we make the following contributions. First, we provide fairly robust sufficient dynamical conditions for the existence of a stationary measure that are in principle applicable to a variety of examples when only a few modes are damped. Next, we show that for ``generic'' choices of  nonlinearity $B$, these conditions are satisfied provided that $\mathrm{dim}(\mathrm{ker}A) < 2d/3$. Lastly, with just a slight modification of the earlier proofs, we show that it suffices to damp just a single mode if one allows 
the nonlinearity to fluctuate slightly in time. In particular, for a Markov chain obtained by iterating \eqref{eq:SDE} on approximately unit random time intervals and slightly perturbing the nonlinearity within our constraint class at each timestep, we prove that there exists at least one stationary measure provided that $\mathrm{dim}(\mathrm{ker}A) < d$.

\subsection{Results for fixed, generic nonlinearities} \label{sec:fixedresults}

The following is the class of ``Euler-like'' bilinear vector fields we shall consider in this paper. 
\begin{definition}[Constraint class $\Bc$]  \label{def:CCI}
We say that a bilinear vector field $B$ belongs to the \emph{constraint class} $\Bc$ if the following conditions hold: 
\begin{align}
	x \cdot B(x,x) & = 0 \quad \text{ and } \label{eq:energyPres} \\ 
	\grad \cdot B(x,x)  & = 0 \quad \label{eq:divFree}
\end{align}
at all $x \in \R^d$; and 
\begin{align}
	B(e_j,e_j) = 0 \label{eq:vanishCoordAxes} 
\end{align}
for all $1 \leq j \leq d$. 
\end{definition}
Bilinear vector fields $B$ as in Definition \ref{def:CCI} can be represented in the form
\[B(x,y) = \begin{pmatrix}
	x^T B^1 y \\ \vdots \\ x^T B^d y 
\end{pmatrix} \, , \]
where $B^i$ is a $d\times d$ matrix with $(j,k)$ entry $b^i_{jk}$. Since we are evaluating $B(x,x)$ throughout, we lose no generality in assuming each $B^i$ is symmetric. In this way, we shall view \[ \Bc \subset \R^{d^3} \, , \] with a coefficient tensor $b = (b^i_{jk}) \in \Bc$ corresponding to the bilinear vector field $B = B_b$. 
It is straightforward to check that equations \eqref{eq:energyPres}, \eqref{eq:divFree} and \eqref{eq:vanishCoordAxes} are linear in the coefficient $b = (b^i_{jk}) \in \R^{d^3}$, hence $\Bc \subset \R^{d^3}$ is a linear subspace, carrying with it the natural topology and notion of Lebesgue measure.


Throughout, $A$ will be a positive semidefinite $d \times d$ matrix with
\begin{align*}
\Kc := \mathrm{ker} A = \Span\set{e_1,...,e_J}
\end{align*}
for some $J \leq d$. 
That is, modes $e_1, \dots, e_J$ are left undamped, and modes $e_{J+1}, \dots, e_d$ are damped. Our first main result is then stated as follows.
\begin{theorem} \label{thm:main} 
There is an open, dense, and full Lebesgue-measure subset $\mathring{\Bc} \subset \Bc$ with the following properties. 
Assume \[J  < \frac23 d \qquad\] and that \[\text{$\sigma_i \neq 0$ for at least two modes $i \in \set{1, \dots, d}$} \,. \] Then, for any $b \in \mathring{\Bc}$, equation \eqref{eq:SDE} with $B = B_b$ admits a
unique stationary measure $\mu$. This stationary measure is absolutely continuous with respect to Lebesgue measure, with smooth and strictly positive density, and satisfies the moment estimates
\begin{align} \label{eq:momentEstimates} \int_{\mathbb R^d} |x|^p \dee \mu(x) < \infty \end{align}
for any $p < \infty$. 
\end{theorem}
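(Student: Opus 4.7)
The plan is to produce the stationary measure via Krylov--Bogolyubov, applied to a Lyapunov function built out of energy-like observables, and then to upgrade to uniqueness and smoothness using Hörmander's theorem together with a support-theorem irreducibility argument. Since $B$ is energy-preserving and divergence-free, Itô's formula immediately yields $\mathcal{L}|x|^{2p} = -2p|x|^{2p-2}\langle x, Ax\rangle + \text{lower order}$, which only controls the components of $x$ transverse to $\Kc = \Span\{e_1,\dots,e_J\}$. The heart of the problem is extracting effective damping along $\Kc$ out of the bilinear drift.

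The approach I would take is to perturb the energy by a low-degree polynomial correction: set
\[
V(x) = |x|^{2p} + \eps\,\Phi(x)\,|x|^{2p-2}
\]
for $p$ large and $\eps$ small, with $\Phi$ a cubic polynomial chosen so that the drift contribution $\langle \nabla \Phi(x), B(x,x)\rangle$ reproduces, modulo terms involving damped components, a positive multiple of $|P_\Kc x|^4$. Because the quadratic monomials $x_j x_k$ with $1 \leq j,k \leq J$ span a space of dimension $\binom{J+1}{2}$, such a $\Phi$ can be sought within a linear matching problem whose coefficients are algebraic functions of $b = (b^i_{jk})$. The remaining mixed terms can then be absorbed by the genuine damping $\langle x, Ax\rangle |x|^{2p-2}$ via Young's inequality, giving $\mathcal{L}V \leq -cV + C$ off a compact set; this immediately yields tightness, a stationary measure, and the moment bounds \eqref{eq:momentEstimates} for every $p < \infty$.

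The role of the sharp threshold $J < \tfrac{2}{3}d$ is to guarantee that this matching problem is solvable generically. The relevant map from corrections $\Phi$ to positive targets on $\Kc$ has rank governed by the inequality $2J < 2(d-J)$, and its rank condition is expressible as nonvanishing of an explicit polynomial $p(b)$ in the entries $b^i_{jk}$. One must exhibit at least one $b_0 \in \Bc$ at which $p(b_0) \neq 0$; a concrete construction can be made by cascading energy from $\Kc$ to $\Kc^\perp$ via disjoint triples of modes, which uses precisely the dimensional budget $d - J > J/2$. I would then define $\mathring{\Bc}$ as the open locus where the appropriate quantitative lower bound on $|p(b)|$ holds; density and full Lebesgue measure in $\Bc$ follow because the complement lies in the zero set of a nontrivial polynomial on the linear space $\Bc \subset \R^{d^3}$.

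For uniqueness, smoothness, and positivity of the density, I would invoke the standard hypoelliptic program. The iterated Lie brackets $[e_i\!\cdot\!\nabla,\, B(x,x)\!\cdot\!\nabla - Ax\!\cdot\!\nabla] = 2B(e_i,x)\!\cdot\!\nabla - Ae_i\!\cdot\!\nabla$ and their successive commutators with $e_{i_1}, e_{i_2}$ produce a family of linear and constant vector fields that, for $b$ in a further open dense full-measure subset (which can be absorbed into $\mathring{\Bc}$ without cost), span $\R^d$ at every point; this gives Hörmander's condition and hence smoothness of transition densities, while the Stroock--Varadhan support theorem yields irreducibility and the strong Feller property, completing uniqueness. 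The main obstacle, and by far the most delicate step, is the construction of $\Phi$ together with the verification that the required algebraic nondegeneracy is indeed generic at the sharp dimensional threshold; the hypoellipticity and irreducibility steps are comparatively standard.
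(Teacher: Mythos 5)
Your proposal has a fundamental gap at its center: no polynomial Lyapunov function of the form $V(x) = |x|^{2p} + \eps\,\Phi(x)\,|x|^{2p-2}$ can satisfy an instantaneous drift inequality $\mathcal{L}V \le -cV + C$ off a compact set, because the undamped set $\Kc$ contains unboundedly many fixed points of the drift. Recall that the constraint class imposes $B(e_j,e_j)=0$ for \emph{all} $j$, including $j \le J$, so every point $x = \alpha e_j$ with $j \le J$ and $\alpha$ arbitrary satisfies $B(x,x) = 0$ and $Ax = 0$ simultaneously. At such points the entire drift contribution to $\mathcal{L}V$ vanishes exactly, and in particular $\langle \nabla\Phi(x), B(x,x)\rangle = 0$ even though $|\Pi_\Kc x|^4 = \alpha^4$; thus the proposed matching of $\langle \nabla\Phi, B\rangle$ to a positive multiple of $|\Pi_\Kc x|^4$ is impossible, not merely nongeneric. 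What remains at $x = \alpha e_j$ is only the diffusion term $\tfrac12\sum_i \sigma_i^2 \partial_{x_i}^2 V$, which is $O(\alpha^{2p-1})$ (and of indefinite sign), while $-cV(x) = -c\alpha^{2p}$. There is therefore no choice of $\Phi$, $\eps$, or $p$ for which the claimed drift inequality holds near these rays, and the Krylov--Bogolyubov/tightness step collapses. The dimensional-counting heuristic you give for the threshold also does not reproduce the paper's bound: your rank condition $2J < 2(d-J)$ gives $J < d/2$, not $J < 2d/3$.

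This obstruction is exactly why the paper cannot argue via an infinitesimal Lyapunov estimate and instead proceeds through a \emph{time-averaged} coercivity criterion (Theorem~\ref{thrm:abstract}, reduced to \eqref{eq:scaledcoercivity} after rescaling). The central technical work is then a quantitative exit-time estimate (Lemma~\ref{lem:scaledexit}) from the neighborhood of the undamped fixed points, established via a random center-stable manifold construction, a cone argument for unstable growth, and a quantitative hypoelliptic $L^1 \to L^2$ smoothing bound that is polynomial in the noise strength. The noise is essential here: one must show that with high probability the trajectory is kicked off the stable manifold and escapes in time $O(|\log \epsilon|)$, which is a genuinely probabilistic statement that no deterministic Lyapunov function can encode. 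Separately, the constraint $J < 2d/3$ arises from a transversality argument with the $2$-jet map $\B^{(2)}(b,x) = (x,\dot x, \ddot x) \in \R^{3d}$: generic transversality to $(\R^{\mathcal S})^3$ forces $3d \le 3J + d$ to fail, giving $J < 2d/3$. Your concluding remarks on Hörmander brackets, smoothness, and Stroock--Varadhan irreducibility are correct and in line with what the paper does for uniqueness and smoothness of the density, but they cannot repair the existence step.
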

\begin{remark}
It is likely that the result holds all the way to $J < d$. Our proof provides a scheme that can in principle be used to show this, but requires verifying an algebraic condition that seems quite difficult to check. For more discussion on this, see Section~\ref{subsec:genericQuadPassthrough} and in particular Remark~\ref{rem:d/3}. 

 \end{remark}

\begin{remark}
Consider for definiteness the case that $\sigma_j \neq 0$ if and only if $j=1,2$ and that the hypotheses of Theorem \ref{thm:main} hold.
Then, by It\^o's lemma and the pointwise ergodic theorem, for $\mathbb P \times Leb$-a.e. $(\omega,x)$ there holds
\begin{align*}
\lim_{T \to \infty} \frac{1}{T} \int_0^T \brak{\Pi_{\Kc} x_t, \Pi_{\Kc}B(x_t,x_t) } \dee t = \frac{1}{2}(\sigma_1^2 + \sigma_2^2), 
\end{align*}
which is simply an expression of the non-vanishing energy flux from the undamped modes to the damped modes.
This also implies that the stationary measures cannot be close to Gaussian measures; see \cite{Majda16} for further discussions. 
\end{remark}

\subsection{Outline of the proof of Theorem \ref{thm:main}} \label{sec:outline}

We comment here primarily on the proof of existence of a stationary measure $\mu$. Uniqueness and other properties of $\mu$ follow via standard techniques and will be discussed briefly at the end of this section.

\subsubsection*{Overcoming partial damping}

Applying It\^{o}'s formula and using \eqref{eq:energyPres}, it can be shown that the solution of \eqref{eq:SDE} with initial condition $x_0 \in \R^d$ satisfies the energy estimate

\begin{equation} \label{eq:energyestimate}
	\E |x_t|^2 + 2 \int_0^t \E [Ax_s \cdot x_s] \dee s = |x_0|^2  +  t \sum_{i=1}^d \sigma_i^2 \, . 
\end{equation}
When $A$ is positive definite (i.e., $\Kc = \emptyset$), equation \eqref{eq:energyPres} and the estimate $Ax \cdot x \gtrsim |x|^2$ (valid for all $x \in \R^d$) implies that for any fixed initial $x_0 \in \R^d$, 
\[\sup_{t \ge 1} \frac{1}{t}\int_0^t \E |x_s|^2 \dee s < \infty \,. \] 
This bound implies that the time-averaged empirical measures 
\[\frac{1}{t} \int_0^t \E [\delta_{x_s}] \dee s\]
of the process $(x_t)$ are tight, which from a standard application of the Krylov-Bogoliubov argument (see, e.g., \cite[Theorem 1.1]{sinai1989dynamical}) implies existence of a stationary measure $\mu$ for $(x_t)$ with finite second moments. 

On the other hand, in the partial damping setting $\Kc \neq \emptyset$, the estimate $Ax \cdot x \gtrsim |x|^2$ is false along $x \in \Kc$ and the argument breaks down. However, one can still hope to recover some control if, roughly speaking, typical trajectories $(x_t)$ do not spend too much time near $\Kc$. Indeed, it is not hard to show that one can carry out the Krylov-Bogoliubov argument as intended under the weaker time-averaged condition 
\begin{equation} \label{eq:O(1)average}
	1+\E \int_0^T Ax_s \cdot x_s \dee s \gtrsim \E \int_0^T |x_s|^{2r}\dee s \quad \forall x_0 \in \R^d \, ,  
\end{equation}
where $T \approx 1$ and $r \in (0,1]$ are constants; see \cite[Lemma 2.1]{BedrossianLiss22} for details.


To this end, the paper \cite{BedrossianLiss22} showed that one can reduce \eqref{eq:O(1)average} to studying time-averaged coercivity estimates for the dissipation over short, $x_0$-dependent timescales, and for initial data at sufficiently high energy and close to the undamped region $\Kc$. Theorem~\ref{thrm:abstract} below is a version of the main abstract criteria for existence from \cite{BedrossianLiss22}. In all that follows, $\Pi_\Kc$ denotes the orthogonal projection onto $\Kc$, and $\Pi^\perp_\Kc = \Id - \Pi_\Kc$. 

\begin{theorem} \label{thrm:abstract}
	Let $T:[1,\infty) 
	\to (0,\infty)$ and $F:[1,\infty)\to (0,\infty)$ be functions satisfying $$\lim_{E\to \infty} T(E) = 0 \quad \text{and} \quad \liminf_{E \to \infty}E^{-2r}F(E) > 0$$ for every $r \in (0,1)$. 
	Suppose that there exist positive constants $c_0,C_0, E_0$, and $\delta_0$ such that for every $E \ge E_0$ and initial condition 
	\[ x_0 \in \{x \in \R^d: ||x| - E| \le \delta_0 E \text{ and } |\Pi_{\Kc}^\perp x| \le \delta_0 E \} \]
	the associated solution $(x_t)$ of \eqref{eq:SDE} satisfies the time-averaged estimate
	\begin{equation} \label{eq:coercivity}
		\frac{1}{T(E)}\E \int_0^{C_0 T(E)} A x_t \cdot x_t \dt \ge c_0 F(E). 
		\end{equation}
	Then, there exists an invariant measure for \eqref{eq:SDE} that has polynomial moments of all orders.
\end{theorem}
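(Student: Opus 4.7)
My strategy is Krylov--Bogoliubov applied to the time-averaged empirical measures $\tfrac{1}{t}\int_0^t \E[\delta_{x_s}]\,\dee s$; tightness of this family together with the polynomial moment bounds reduces, as in \cite[Lemma~2.1]{BedrossianLiss22}, to establishing a time-averaged coercive bound of the form \eqref{eq:O(1)average} with some fixed $T \approx 1$ and some $r \in (0,1)$. Thus the task is to promote the hypothesis \eqref{eq:coercivity} --- localized both in time (on the $x_0$-dependent short window $[0, C_0 T(E)]$, with $T(E) \to 0$) and in phase space (high-energy data near $\Kc$) --- into an estimate over a fixed unit time interval, uniform in $x_0 \in \R^d$.

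I would partition initial conditions into three regimes: (i) $|x_0| \leq E_0$; (ii) $|x_0| > E_0$ with $|\Pi_\Kc^\perp x_0| \geq \delta_0 |x_0|$ (the ``good'' region); and (iii) $|x_0| > E_0$ with $|\Pi_\Kc^\perp x_0| \leq \delta_0 |x_0|$ (the ``bad'' region). Regime (i) is immediate from \eqref{eq:energyestimate} and Jensen. In (ii), the pointwise lower bound $Ax_0 \cdot x_0 \gtrsim \delta_0^2 |x_0|^2$ combined with a short It\^o continuity/bootstrap --- keeping $|x_t|$ and $|\Pi_\Kc^\perp x_t|$ comparable to their initial values with high probability on a fixed deterministic sub-interval --- yields $\int_0^T \E[Ax_s \cdot x_s]\,\dee s \gtrsim |x_0|^2$.

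The heart of the argument is regime (iii). I would define stopping times $\tau_0 = 0$ and, recursively, $\tau_{k+1}$ as the next re-entry time into the bad region after the previous exit at time $\tau_k + C_0 T(|x_{\tau_k}|)$. By the strong Markov property, hypothesis \eqref{eq:coercivity} applies at each $\tau_k$ with $E$ replaced by $|x_{\tau_k}|$, contributing at least $c_0 T(|x_{\tau_k}|) F(|x_{\tau_k}|)$ of dissipation on the subsequent disjoint window of length $C_0 T(|x_{\tau_k}|)$; summing,
\[ \E\int_0^T Ax_s \cdot x_s\,\dee s \gtrsim \E\sum_{\tau_k \leq T} T(|x_{\tau_k}|) F(|x_{\tau_k}|). \]
Assuming $|x_{\tau_k}| \sim E := |x_0|$ uniformly in $k$, the argument closes by dichotomy: either the process spends a fraction of $[0,T]$ of order one inside the bad region, so that $\sum T(|x_{\tau_k}|) \gtrsim T$ and the right-hand side is $\gtrsim T F(E) \gtrsim E^{2r}$ for any $r < 1$ by the growth hypothesis on $F$; or a positive fraction of time is spent in the good region at comparable energy, whereupon the regime-(ii) pointwise bound yields the same conclusion.

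The main obstacle is this Markov patching step: ensuring $|x_{\tau_k}| \sim E$ uniformly in $k$, since otherwise $F(|x_{\tau_k}|) \ll F(E)$ would spoil the lower bound. I would work on the high-probability event $\{\sup_{t \leq T}\big||x_t|^2 - |x_0|^2\big| \leq \tfrac{1}{2}|x_0|^2\}$, and control its complement by a dichotomy based on \eqref{eq:energyestimate}: either $\E\int_0^T Ax_s \cdot x_s\,\dee s \gtrsim E^2$ already (in which case we are done), or $\E|x_T|^2 \geq \tfrac{1}{2}E^2$, in which case a BDG/Chebyshev estimate on the martingale part of $|x_t|^2$ --- whose quadratic variation is $\lesssim E^2 T$, thanks crucially to the energy-conservation identity \eqref{eq:energyPres} eliminating the bilinear contribution to the drift of $|x_t|^2$ --- gives the required sup-control on an event of overwhelming probability.
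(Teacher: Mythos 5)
The paper does not actually prove Theorem~\ref{thrm:abstract} itself: the remark following it explicitly appeals to \cite[Lemmas 2.1 and 2.2]{BedrossianLiss22} and describes the theorem as a ``simple corollary'' of those results, which are established there by exactly the kind of stopping-time/strong-Markov argument you propose. Your plan — Krylov--Bogoliubov on time-averaged laws, reduction to the fixed-unit-time average coercivity bound \eqref{eq:O(1)average}, stopping times plus the strong Markov property to patch the short, $x_0$-dependent windows into a global estimate, and a BDG/Chebyshev estimate on $|x_t|^2$ (using \eqref{eq:energyPres} to kill the nonlinear drift) to keep $|x_{\tau_k}|\sim E$ — is therefore a reconstruction of the cited proof rather than an alternative to anything appearing in this paper, and the route is the right one.

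One local misstep in your write-up: the standalone treatment of regime (ii) asserts that, when $|x_0|=E$ is large and $|\Pi_\Kc^\perp x_0|\ge \delta_0 E$, a ``short It\^o continuity/bootstrap'' keeps $|\Pi_\Kc^\perp x_t|$ comparable to $|\Pi_\Kc^\perp x_0|$ over a \emph{fixed} deterministic sub-interval and so delivers $\int_0^T\E[Ax_s\cdot x_s]\,\dee s \gtrsim E^2$. At high energy this fails: $|B(x,x)|\sim E^2$, so the deterministic flow can drive $\Pi_\Kc^\perp x_t$ to zero on a timescale $\lesssim E^{-1}$, and the only window guaranteed by continuity of $t\mapsto Ax_t\cdot x_t$ has length $\sim \delta_0^2 E^{-1}$, yielding integrated dissipation $\gtrsim E$, which is adequate only for $r\le 1/2$ and so cannot give moments of all orders on its own. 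This is a presentation defect rather than a gap, because your regime-(iii) dichotomy already covers initial data in the good region: either a macroscopic fraction of $[0,T]$ is spent with $|\Pi_\Kc^\perp x_s|\gtrsim \delta_0|x_s|$ at energy $\sim E$, in which case the pointwise bound $Ax\cdot x\gtrsim \delta_0^2|x|^2$ integrates to $\gtrsim E^2$, or the stopping-time windows fill a macroscopic fraction of $[0,T]$ and deliver $\gtrsim F(E)$. Regime (ii) should simply be absorbed into that dichotomy rather than argued separately over a fixed interval.
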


\begin{remark}
What is shown in \cite{BedrossianLiss22} is actually that if the hypotheses above hold, but with some $F$ that satisfies $\liminf_{E \to \infty} E^{-2r}F(E) > 0$ for just some fixed $r < 1$, then \eqref{eq:SDE} admits an invariant measure $\mu$ with $\int_{\R^d} |x|^p \mu(dx) < \infty$ for every $p < 2r/(1-r)$ (see in particular \cite[Lemmas 2.1 and 2.2]{BedrossianLiss22}). In other words, $F$ can grow much slower than assumed in \eqref{eq:coercivity} if the goal is just to deduce existence. 
Theorem~\ref{thrm:abstract} is a simple corollary of this result that gives a criteria for the invariant measure constructed to have polynomial moments of all orders and will be natural to apply in our setting.
\end{remark}

\begin{remark}
The condition \eqref{eq:coercivity} is close to obtaining quantitative exit time estimates from the set $\set{x \in \R^d: \abs{\Pi_{\Kc}^\perp x} \leq \delta_0\abs{\Pi_{\Kc} x} \approx \delta_0 E}$ that vanish at least like $T(E)$ as $E \to \infty$ for initial conditions starting on or near $\Kc$ (i.e. quantitative estimates on how quickly solutions escape from neighborhoods of $\Kc$).  
\end{remark}

\subsubsection*{Dynamical conditions to apply Theorem \ref{thrm:abstract}}

The plan for applying Theorem \ref{thrm:abstract} is as follows: first, to develop a set of sufficient \emph{dynamical} conditions on the dynamics of the deterministic flow $\dot x = B(x,x), B = B_b$ which allow to apply Theorem \ref{thrm:abstract}; and second, to show that these dynamical conditions hold for a `generic' set of $b \in \Bc$. 

The dynamical conditions we  impose are as follows. Here and in all that follows we will write 
\begin{align}
	\Ec = \bigcup_{i = 1}^d \Span(e_i)
\end{align}
for the union of the coordinate axes of $\R^d$, corresponding to the required equilibria of $B_b$ in the constraint class as in \eqref{eq:vanishCoordAxes}.

\begin{itemize}
	\item[(1)] (Hyperbolicity) The linearization $DB(x,x)$ at each equilibrium $x \in \Ec \setminus \{0\}$ admits a single eigenvalue along the imaginary axis. 
	\item[(2)] (Hypoellipticity) For any $d \times d$ matrix $M$ and $\epsilon \in [0,1]$, the collection of vector fields $\{B = B_b + \epsilon Mx, \sigma_1 e_1, \ldots, \sigma_d e_d\}$ satisfies the parabolic H\"ormander condition.\footnote{Let $\{X_j\}_{j=0}^m$ be a collection of smooth vector fields on $\R^d$. Define $V_0 = \{X_j\}_{j=1}^m$ and then for $k \ge 1$ let 
		$$V_k = V_{k-1} \cup \{[X_j,Y]: 0 \le j \le m, \quad Y \in V_{k-1}\}.$$  
		Recall that $\{X_j\}_{j=0}^m$ is said to satisfy the \textit{parabolic H\"{o}rmander condition} if for every $x \in \R^d$ there exists some $N \in \N$ such that $\mathrm{span} (V_N) = \R^d$.}
	\item[(3)] (Dynamics on $\Kc$) For any solution $x(t)$ to the deterministic problem $\dot x = B(x,x)$, it holds that if $x(t) \in \mathring{\Kc} := \Kc \setminus \Ec$, then
	\[\frac{d^j}{dt^j} x(t) \notin \mathcal{\Kc} \]
	for some $j \geq 1$. 
\end{itemize}

The following is proved in Section \ref{sec:existsStatMeas}. 

\begin{proposition}\label{prop:dynCondImpliesExist1}
	Assume $b \in \Bc$ is such that (1) -- (3) above hold for the bilinear vector field $B = B_b$. Then, the hypotheses of Theorem \ref{thrm:abstract} hold with $$T(E) = \frac{\log E}{E} \quad \text{ and } \quad F(E) = \frac{E^2}{\log E},$$ and in particular there exists a stationary measure for \eqref{eq:SDE} satisfying the moment estimate \eqref{eq:momentEstimates} for all $p \geq 1$. 
\end{proposition}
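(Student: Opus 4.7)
\emph{Strategy.} My plan is to verify the coercivity estimate \eqref{eq:coercivity} with $T(E) = \log E/E$ and $F(E) = E^2/\log E$ via a rescaling that renders \eqref{eq:SDE} a small random perturbation of the deterministic energy-conserving flow $\dot y = B(y,y)$. Setting $y_s = E^{-1} x_{s/E}$ yields
\begin{align*}
\dee y_s = B(y_s,y_s) \dee s - E^{-1} A y_s \dee s + E^{-3/2} \sum_{j=1}^d \sigma_j e_j \dee \tilde W^{(j)}_s,
\end{align*}
with rescaled initial data satisfying $||y_0|-1|, |\Pi_\Kc^\perp y_0| \leq \delta_0$. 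A direct change of variables in the dissipation integral transforms \eqref{eq:coercivity} with the stated $T, F$ into the uniform lower bound
\begin{align*}
\E \int_0^{C_0 \log E} A y_s \cdot y_s \, \dee s \geq c_0.
\end{align*}
Since $B$ preserves $|y|$ and is Lipschitz on bounded sets, a Gr\"onwall and BDG argument shows that, on an event of probability $1-o(1)$ as $E \to \infty$, the rescaled trajectory shadows the deterministic flow $\phi_s(y_0)$ to accuracy $\ll 1$ on $[0, C_0 \log E]$, provided $C_0$ is smaller than the reciprocal of the Lipschitz constant of $B$ on the unit ball. The problem thus reduces to showing that $\phi_s(y_0)$ exits a small neighborhood of $\Kc$ within time $C_0 \log E$ and spends a definite fraction of a subsequent $O(1)$ subwindow at distance $\gtrsim 1$ from $\Kc$.

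\emph{Dynamical decomposition.} I would partition the set of rescaled initial data into three regions. First, if $|\Pi_\Kc^\perp y_0| \gtrsim 1$, the desired bound follows immediately from continuity. Second, on a compact subset of $\mathring \Kc := \Kc \setminus \Ec$ bounded away from the equilibria in $\Ec \cap \Kc$, condition (3) furnishes an integer $j = j(y_0) \geq 1$ with $\frac{\dee^j}{\dee s^j}\phi_s(y_0)|_{s=0} \notin \Kc$; Taylor expansion then gives $|\Pi_\Kc^\perp \phi_s(y_0)| \gtrsim s^j$ over a definite time window, with constants uniform by compactness and semicontinuity of $j$. Third, in a small neighborhood of some equilibrium $e_i \in \Ec \cap \Kc$, the identity $B(e_i,e_i)=0$ together with differentiating $x \cdot B(x,x) \equiv 0$ at $x=e_i$ shows that the Jacobian of $F(x) := B(x,x)$ at $e_i$ annihilates the radial direction and maps $\R^d$ into the tangent space $T_{e_i}S^{d-1}$. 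Condition (1) then forces its restriction to $T_{e_i}S^{d-1}$ to be hyperbolic, so the stable manifold theorem supplies a local stable manifold $W^s_\mathrm{loc}(e_i)$ of codimension $\geq 1$; orbits starting off $W^s_\mathrm{loc}(e_i)$ at distance $\eta$ exit a fixed neighborhood of $e_i$ in time $\lesssim \log(1/\eta)$, at which point they fall into the second region.

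\emph{Noise input and main obstacle.} The remaining danger is a trajectory that starts very close to the exceptional set $\Sigma := \bigcup_i W^s_\mathrm{loc}(e_i)$. Here condition (2), applied with $\epsilon = E^{-1}$ and $M = -A$, gives the parabolic H\"ormander condition for the rescaled SDE uniformly in $E$. Standard Malliavin calculus, of the type already used in \cite{BedrossianLiss22}, then produces a smooth transition density for $y_{s_0}$ at a fixed $s_0 \in (0,1)$ that is polynomially controlled in $E$ on bounded sets, so $\PP(\mathrm{dist}(y_{s_0}, \Sigma) \leq E^{-\kappa}) \lesssim E^{\alpha-\kappa}$ for any $\kappa > 0$ and some fixed $\alpha$. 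Choosing $\kappa$ large enough to make this negligible but still bounded by $C_0/C_1$, where $C_1^{-1}$ is the minimum positive expansion rate across the finitely many equilibria, the strong Markov property at $s_0$ restarts the deterministic analysis with initial condition at distance $\geq E^{-\kappa}$ from $\Sigma$, yielding exit from a neighborhood of $\Kc$ within total time $\leq \kappa C_1 \log E + O(1) \leq C_0 \log E$, followed by an $O(1)$ subwindow contributing dissipation $\gtrsim 1$. The main obstacle---and the source of the logarithmic loss in $F(E)$---is the interface between the second and third regions: the uniform constants in condition (3) degenerate as $y_0 \to \Ec$, and reconciling this degeneracy with the hyperbolic escape timescale near each $e_i$ requires a quantitative, \L{}ojasiewicz-type bound on how rapidly $|\Pi_\Kc^\perp \phi_s(y_0)|$ grows uniformly in $\mathrm{dist}(y_0, \Ec)$.
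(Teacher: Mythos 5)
Your overall plan — rescaling to the high-energy regime, partitioning initial data into (i) away from $\Kc$, (ii) in $\mathring\Kc$ but bounded away from $\Ec$, (iii) near a fixed point in $\Ec$, invoking condition (3) with compactness in region (ii), and condition (1) plus hypoellipticity in region (iii) — is structurally the same as the paper's. Lemma 2.11 of the paper is exactly your compactness argument for region (ii), and the quantitative-in-$\epsilon$ density bound you invoke is Lemma 2.6.

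However, there is a genuine gap in your treatment of region (iii), and it is the crux of the proof. You propose to show that at some fixed time $s_0$ the trajectory lies at distance $\geq E^{-\kappa}$ from the \emph{deterministic} stable set $\Sigma = \bigcup_i W^s_{\mathrm{loc}}(e_i)$, and then to conclude escape by shadowing the deterministic flow for a further time $\sim \kappa |\log E|$. The difficulty is that these two requirements pull $\kappa$ in opposite directions and may be incompatible. The density bound forces $\kappa > \alpha$, where $\alpha$ is the polynomial loss in the hypoelliptic smoothing estimate; $\alpha$ is determined by the number of brackets in H\"ormander's condition and is not a priori small. On the other hand, the Gr\"onwall discrepancy between the SDE trajectory and the deterministic flow near a hyperbolic fixed point grows like $E^{-3/2}\,e^{\Lambda s}$ where $\Lambda$ is the largest unstable rate, while the deterministic distance from $\Sigma$ grows only like $E^{-\kappa}\,e^{\lambda s}$ with $\lambda \leq \Lambda$ the slowest unstable rate. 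For the shadowing to remain informative throughout the escape time $\sim \kappa|\log E|/\lambda$, one needs (roughly) $\kappa < \tfrac{3\lambda}{2\Lambda}$, a bound that has nothing to do with $\alpha$ and can easily be violated. Your parenthetical constraint ``$C_0$ smaller than the reciprocal of the Lipschitz constant'' is a symptom of the same tension and is not actually available: the escape time of order $|\log E|$ is forced by the noise amplitude $E^{-3/2}$ and the spectral gap, not something one gets to choose small.

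The paper resolves precisely this issue by replacing the deterministic stable manifold with a \emph{random} center-stable manifold (Lemma 2.9), constructed via a Perron-type contraction directly for the noisy flow on the time window $[0, C_0|\log\epsilon|]$, together with a cone-preservation argument (Lemma 2.10) showing that the unstable component of any trajectory started off that random manifold grows exponentially for the entire stay near $\Ec$. Because the random manifold is adapted to the realized noise, there is no competing shadowing error: a trajectory at distance $\epsilon^m$ from the random manifold escapes in time $\lesssim m|\log\epsilon|$, and $m$ can be taken as large as the density bound requires, independently of any Lipschitz or spectral constants. One further point: since each $e_i$ lies on a whole line of fixed points, the linearization has a one-dimensional center (radial) direction, so the relevant invariant object is a center-stable, not merely stable, manifold; the paper handles the center component separately using energy conservation (Lemma 2.8) to keep $|\Pi_c x_*|$ controlled. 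Finally, your diagnosis of ``the source of the logarithmic loss in $F(E)$'' as the interface between regions (ii) and (iii) is off: that interface is handled simply by fixing a small $\delta$ and using compactness of $\Kd$; the logarithmic factor comes from the $|\log\epsilon|$ escape timescale near the unstable equilibria, which is the genuinely technical part of the argument.
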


\begin{remark} \  
	\begin{itemize}
\item[(a)] For the deterministic flow $\dot x = B(x,x)$, assumption (1) ensures trajectories initiated off of stable manifolds through $\Ec$ are repelled away from $\Ec$ exponentially fast. Assumption (2) is a standard tool in stochastic analysis, used to check that the transition kernels for the Markov process $(x_t)$ have smooth densities with respect to Lebesgue measure \cite{H67}. In our setting, hypoellipticity will be used to ensure that with high probability, enough noise is injected so that  trajectories $(x_t)$ of the SDE \eqref{eq:SDE} avoid the stable manifolds through $\Ec$. Finally, assumption (3) is used to ensure trajectories near $\mathring{\Kc}$ do not linger there for too long, and in particular 
precludes the existence of invariant sets for the deterministic flow within the rest of the undamped set $\mathring{\Kc}$. Notice, for example, that this assumption implies that the stable manifolds through $\Ec$ intersect $\Kc$ on sets of positive codimension. 
\item[(b)] For high-energy initial conditions and at the timescales considered in our application of Theorem \ref{thrm:abstract}, the noise and dissipation affecting $(x_t)$ are actually quite weak relative to the strength of the nonlinearity $B = B_b$. This is essentially why it suffices to impose dynamical conditions on $B_b$.
On the other hand, this is in tension with how the noise must be used in a critical way to avoid the stable manifolds of $\Ec$. 
		Dealing with the degeneracy of the noise and its effective weakness at high energy in the proof of Proposition~\ref{prop:dynCondImpliesExist1} requires a slightly quantitative hypoelliptic smoothing estimate and crucially relies on the exponential (instead of algebraic-in-time) instability of $\Ec$. 
		\item[(c)] With $T(E)$ and $F(E)$ as defined in the statement of Proposition~\ref{prop:dynCondImpliesExist1}, the estimate \eqref{eq:coercivity} says essentially that solutions of \eqref{eq:SDE} with $|x_t| \approx E$ spend roughly $(\log E)^{-1}$ fraction of their time in regions where $|\Pi_{\Kc}^\perp x_t|^2 \approx |x_t|^2$. This is most difficult to establish for initial conditions very close to $\Ec$ and indeed obtaining a quantitative estimate on the escape time of solutions from a suitable neighborhood of $\Ec$ is the most technical step in the proof of Proposition~\ref{prop:dynCondImpliesExist1}. Our choice of the particular timescale $T(E) = E^{-1}\log E$ comes from the scaling of the typical escape time for \eqref{eq:SDE} from the vicinity of a spectrally unstable fixed point in the high-energy limit. For more discussion on the exit time bound that we require and how it fits into the existing literature, see the beginning of Section~\ref{sec:exittimes}.
		\item[(d)] It is likely that Proposition~\ref{prop:dynCondImpliesExist1} can still be established with assumption (1) weakened to the spectral instability of $\Ec\setminus \{0\}$ (i.e., to allow $DB(x,x)$ to have a more general center subspace) or even when $B$ possesses a more general manifold of unstable equilibria contained in $\mathrm{ker}A$. Such generalizations could become relevant if more constraints are put on $B$ (e.g., a second conservation law), but we did not need to pursue them in the present paper. 
		
	\end{itemize}
\end{remark}

To complete the proof of the existence portion of Theorem \ref{thm:main}, it will suffice now to check that assumptions (1) -- (3) are `generic'. The following is proved in Section \ref{sec:genericConditions1}. 

\begin{proposition} \label{prop:genericconditions}
	There is an open, dense, and full Lebesgue-measure subset $\mathring{\Bc} \subset \Bc$ with the following property. 
	Assume $J < 2d/3$ and that $\sigma_i \neq 0$ for at least two indices $i \in \{ 1, \dots, d\}$. Then, dynamical conditions (1) -- (3) as above hold for all $B = B_b, b \in \mathring{\Bc}$. 
\end{proposition}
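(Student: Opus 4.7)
The plan is to show that each of conditions (1)--(3) fails only on a proper real algebraic (or at worst semi-algebraic) subvariety of $\Bc$; since $\Bc$ is a finite-dimensional real linear subspace of $\R^{d^3}$, any such subvariety is closed, nowhere dense, and of Lebesgue measure zero, so defining $\mathring{\Bc}$ as the intersection of the three ``good'' sets will yield an open, dense, full-measure subset. For each condition, the reduction is to identify a polynomial $P_i$ in $b \in \Bc$ (and possibly auxiliary parameters) whose non-vanishing implies the condition, and then to exhibit a single witness $b_\star \in \Bc$ for which $P_i(b_\star) \neq 0$. The nontrivial work is the construction of these witnesses consistent with the linear constraints \eqref{eq:energyPres}--\eqref{eq:vanishCoordAxes} defining $\Bc$.

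For condition (1), at $x = e_j$ the linearization $DB_b(x,x)\big|_{x=e_j} = 2B_b(e_j,\cdot)$ is a linear operator depending linearly on $b$, with $e_j$ in its kernel by \eqref{eq:vanishCoordAxes}; absence of additional purely imaginary eigenvalues is the nonvanishing of an appropriate resultant of the characteristic polynomial restricted to $e_j^\perp$, which is polynomial in $b$. A witness can be constructed by perturbing a model such as Lorenz-96 or Sabra, whose linearizations at the axes can be diagonalized explicitly and shown to have generic imaginary-axis behavior. For condition (2), the parabolic Hörmander condition at any fixed $x$ is the nonvanishing of a determinant of iterated Lie brackets of $\{B_b + \epsilon M x, \sigma_{j_1} e_{j_1}, \sigma_{j_2} e_{j_2}\}$. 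The key observation is that brackets with the constant fields $e_j$ produce vector fields of increasing polynomial degree in $x$, whose top-degree terms depend only on $b$ (since $\epsilon M x$ only contributes linear-in-$x$ corrections). Hence the leading part of the bracket determinant at a generic $x$ is a nontrivial polynomial in $b$ alone, which can be checked on a single witness; the lower-order $M$-dependence then gives hypoellipticity for all $(M,\epsilon)$ on any compact, and uniformity in $(M,\epsilon) \in \R^{d\times d} \times [0,1]$ propagates by an openness argument together with real-analyticity of bracket computations in $x$.

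For condition (3), I would define the descending chain of real algebraic subsets $\Kc^{(j)}(b) \subseteq \Kc$ by requiring that the first $j$ derivatives of the trajectory under $\dot x = B_b(x,x)$ starting at $x$ all lie in $\Kc$; this chain stabilizes at some $\Kc^{(\infty)}(b)$, which is precisely the set where (3) can fail, and we need $\Kc^{(\infty)}(b) \subseteq \Ec$ for generic $b$. The first-order condition $\Pi_\Kc^\perp B_b(x,x) = 0$ is a system of $d - J$ quadratic forms on $\Kc \cong \R^J$; generically this common zero locus has dimension at most $2J - d$, and the hypothesis $J < 2d/3$ is the dimensional threshold under which, together with the higher-order derivative conditions and the a priori inclusion $\Ec \cap \Kc \subseteq \Kc^{(\infty)}(b)$ forced by \eqref{eq:vanishCoordAxes}, the residual zero locus can be made to collapse onto $\Ec \cap \Kc$. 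The witness is constructed by choosing the components of $B_b$ normal to $\Kc$ so as to produce a maximally transverse system of quadrics on $\Kc$ vanishing on the $J$ coordinate axes, compatible with all linear constraints of $\Bc$.

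The main obstacle is condition (3), which carries the dimensional restriction $J < 2d/3$: verifying that a generic system of $d - J$ quadrics on $\R^J$ subject to the constraints \eqref{eq:energyPres}--\eqref{eq:vanishCoordAxes} has common zero locus equal to the $J$ coordinate axes is a subtle algebraic-geometric problem, and it is precisely this step that the paper flags as resisting extension to the full range $J < d$ (cf.\ Remark following Theorem~\ref{thm:main}). Once the witness for (3) is in hand, conditions (1) and (2) can be imposed additionally by an arbitrarily small perturbation within $\Bc$, since both are stable under small perturbation and their ``good'' sets are open.
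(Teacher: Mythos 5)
Your overall skeleton --- show each bad set is small, take the intersection, produce witnesses --- matches the paper's strategy, and your sketches for conditions (1) and (2) are directionally right (though for (2) the paper actually arranges the iterated brackets $v_1,\ldots,v_d$ to be \emph{constant} vector fields independent of $x$, $\epsilon$, $M$, not vector fields of increasing polynomial degree; it then checks the determinant $G(b)$ at a single explicit witness $b_*$ for which $v_i(b_*)=e_i$). For (1) the paper does not exhibit an algebraic witness polynomial at all: it first kills the discriminant locus, then argues by real-analyticity of $\Phi(A)=\prod\Re\lambda_i(A)$ on each component of $\{\Disc\neq0\}$, proving density by an explicit orthogonal-conjugation perturbation lemma (Lemma~\ref{lem:snapToHollow}) rather than by perturbing Lorenz-96 or Sabra --- but your resultant route (nonvanishing of $\operatorname{Res}(p,q)$ where $\det(A-i\omega I)=p(\omega)+iq(\omega)$) could in principle substitute, since that is a sufficient polynomial condition; you would still need to verify its non-identical vanishing on $\Hc_m$ for $m\ge 3$.

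The genuine gap is in (3), which you correctly identify as the crux. Your dimension count from the first-order condition ($d-J$ quadrics on $\R^J$, generic zero locus of dimension $2J-d$) only collapses the zero locus to $\Ec$ when roughly $J<d/2$, and you appeal to unspecified ``higher-order derivative conditions'' to reach $J<2d/3$ without saying how. The paper's mechanism is concrete: it applies the Transversality Theorem to the \emph{two-jet} map $\B^{(2)}(b,x)=(x,\dot x,\ddot x):\Bc\times(\R^d\setminus\Ec_{q-1})\to\R^{3d}$ with target submanifold $(\R^\mathcal{S})^3$, so that for generic $b$ a nonempty preimage forces $3d\le d+3q\le d+3J$, i.e.\ $J\ge 2d/3$. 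Proving this transversality (Proposition~\ref{prop:quadTransversality2}) requires an explicit row reduction of the Jacobian $D_{b}(\dot x,\ddot x)$ restricted to a carefully chosen block of coefficient directions, and in turn needs the auxiliary Lemma~\ref{lem:equilibria} (generic absence of non-axis equilibria of $B_b$ on coordinate hyperplanes). Beyond the dimension count, you also omit the necessary chain argument: the two-jet transversality only controls points $x\in\R^\mathcal{S}\setminus\Ec_{q-1}$ (all $q$ relevant coordinates nonzero), so one must iteratively ``grow'' the support set $\mathcal{S}_0\subsetneq\mathcal{S}_1\subsetneq\cdots$ as in the proof of Proposition~\ref{prop:genericPassthrough} and use analyticity of $t\mapsto x(t)$ to pass from a forward-in-time escape to a condition on derivatives at $t=0$. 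Without both the two-jet transversality count and the nested escape chain, the $J<2d/3$ threshold is not actually established.
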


\begin{remark}
The most challenging part of proving Proposition~\ref{prop:genericconditions} is establishing that condition (3) holds generically under the assumption $J < 2d/3$. For this, we rely on a suitable application of the Transversality Theorem (see Theorem~\ref{thm:transversality} and Section~\ref{subsec:genericQuadPassthrough} for details).
\end{remark}

With the existence portion of Theorem~\ref{thm:main} established, the remaining claims follow from well-known arguments. Smoothness of the Markov transition kernel follows from hypoellipticity, while positivity of the Markov transition kernel follows from the geometric control theory arguments found e.g. in \cite{HerzogMattPositivity,HMGHSaturation} (together with the conditions in Definition \ref{def:CCI} and the parabolic H\"ormander condition). 
The uniqueness of the measure then finally follows from Doob-Khasminskii \cite{DPZErgodicity}. 

\subsection{Results for generic nonlinearities with time switching} \label{sec:introswitching}

In the proof of Proposition~\ref{prop:genericconditions}, the restriction $J < 2d/3$ is used only to establish the dynamical condition (3). That is, to show that for $b \in \mathring{\Bc}$ the deterministic problem $\dot{x} = B_b(x,x)$ does not possess any invariant sets in $\Kc$ other than the fixed points $\Kc \cap \Ec$. Regardless of $J$, it is reasonable to expect that any \textit{particular} trajectory for $B_b$ that remains in $\Kc \setminus \Ec$ for all time should not persist under typical perturbations of the coefficient $b$. This intuition comes in part from the easily verified fact that for any $J < d$ and fixed $x \in \Kc \setminus \Ec$, the Jacobian of the map $b \mapsto B_b(x,x)$ evaluated at any $b_0 \in \Bc$ contains $\Kc^\perp$ in its range. This discussion suggests that the restriction $J < 2d/3$ in Theorem~\ref{thm:main} can be weakened if one modifies the model to allow the choice of generic coefficient to fluctuate in some way temporally. We show that this is indeed the case for a Markov chain obtained by running \eqref{eq:SDE} on roughly unit time intervals and changing the coefficient that defines $B$ slightly at each timestep. 

Informally, the Markov chain we consider is defined as follows. Fix coefficients $b_* \in \mathring{\Bc}$ and let $\mathcal{S} \subseteq \mathring{\Bc}$ be any open ball that contains $b_*$ and is compactly contained in $\mathring{\Bc}$ (recall that $\mathring{\Bc}$ is open). Let $I = [1/2,3/2]$. The first step in the chain is defined by picking an element $(b_1,t_1)$ from the uniform measure on $\mathcal{S} \times I$ and running \eqref{eq:SDE} for a time $t_1$ with $B = B_{b_1}$. The second step is obtained by independently sampling another pair $(b_2, t_2) \in \mathcal{S} \times I$ and running \eqref{eq:SDE} with $B= B_{b_2}$ for a time $t_2$. This process continues, with a new choice of coefficient and runtime for \eqref{eq:SDE} being chosen at each step. 

To define precisely the discrete-time system described above, let us assume here that $\Omega = C_0([0,\infty);\R)^{\otimes d}$ is the $d$-fold product of the classical Wiener spaces equipped with the associated Borel $\sigma$-algebra and Wiener measure $\P$. Let $\theta_t: \Omega \to \Omega$ denote the left shift by $t \ge 0$ (i.e., $\theta_t \omega(s) = \omega(s+t) - \omega(t)$) and recall that $\theta_t$ leaves $\P$ invariant. Since we work with additive noise, for $\omega \in \Omega$ and $b \in \Bc$ the SDE \eqref{eq:SDE} with $B = B_b$ has a well-defined random flow map $(t,x) \mapsto \varphi_{(\omega, b)}^t(x)$. In the definition of $\varphi_{(\omega,b)}^t$, the same damping matrix $A$ with $\Kc = \mathrm{span}\{e_1,\ldots, e_J\}$ and noise coefficients $\{\sigma_i\}_{i=1}^d$ with at least two nonzero are fixed and used for every $b \in \mathcal{S}$.
We now augment the probability space to allow for the random time intervals and perturbations in the nonlinearity.  Let $m_\mathcal{S}$ and $m_I$ be the normalized Lebesgue measures on the Borel $\sigma$-algebras of $\mathcal{S}$ and $I$, respectively. Then, define the probability space $(\hat{\Omega}, \hat{\P}, \hat{\mathcal{F}})$, where $\hat{\Omega} = \Omega \times \mathcal{S}^\N \times I^\N$, $\hat{\P} = \P \times m_\mathcal{S}^\N \times m_I^\N$, and $\hat{\mathcal{F}}$ is the associated product $\sigma$-algebra. For $\hat{\omega} = (\omega, \{b_i\}_{i=1}^\infty, \{t_i\}_{i=1}^\infty) \in \hat{\Omega}$ and $x \in \R^d$, we define 
\begin{equation} \label{eq:ChainDefinition}
	\Phi_n(\hat{\omega},x) = \varphi_{(\theta_{\tau_{n-1}}\omega, b_n)}^{t_n} \circ \varphi_{(\theta_{\tau_{n-2}}\omega, b_{n-1})}^{t_{n-1}} \circ \ldots \circ \varphi_{(\omega,b_1)}^{t_1}(x),
\end{equation}
where $\tau_k= \sum_{i=1}^k t_k$. It is easy to check that $\{\Phi_n\}_{n\in \N}$ defines an $\R^d$-valued Markov chain over the probability space $(\hat{\Omega}, \hat{\P}, \hat{\mathcal{F}})$.

Our second main result, proven in Section~\ref{sec:timedependent}, concerns the existence of an invariant measure for $\{\Phi_n\}_{n \in \N}$ when just a single mode is damped. 
\begin{theorem} \label{thm:mainswitched}
	Let $J < d$. For any $b_* \in \mathring{\Bc}$ the Markov chain $\{\Phi_n\}_{n\in \N}$ defined above has at least one stationary distribution.
\end{theorem}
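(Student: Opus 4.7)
The plan is to adapt the continuous-time coercivity argument underlying Theorem \ref{thm:main} to the switched chain, replacing the role of dynamical condition (3) by the randomization of $b$. Let $(\tilde x_t)_{t\ge 0}$ denote the continuous-time interpolation of $\{\Phi_n\}$: on each interval $[\tau_{n-1},\tau_n)$ the process $(\tilde x_t)$ solves \eqref{eq:SDE} with $B=B_{b_n}$. Because every $b\in\mathcal{S}\subseteq\Bc$ satisfies the energy identity \eqref{eq:energyPres}, the It\^o computation underlying \eqref{eq:energyestimate} applies verbatim, yielding
\begin{align*}
\hat{\mathbf{E}}|\tilde x_T|^2 + 2\int_0^T \hat{\mathbf{E}}[A\tilde x_s\cdot\tilde x_s]\,ds \;=\; |x_0|^2 + T\sum_i \sigma_i^2.
\end{align*}
Exactly as in \cite[Lemma 2.1]{BedrossianLiss22}, whose proof uses only the energy identity and the Markov property, tightness of the empirical measures of $\{\Phi_n\}$ follows from a discrete-time coercivity estimate
\begin{align*}
\hat{\mathbf{E}}\int_0^{T_0} A\tilde x_s\cdot\tilde x_s\,ds \;\ge\; c_0 E^{2r}
\end{align*}
for some fixed $T_0, c_0>0$ and $r\in(0,1]$, uniformly over $x_0$ with $||x_0|-E|\le \delta_0 E$ and $|\Pi_\Kc^\perp x_0|\le\delta_0 E$, $E\ge E_0$. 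Any weak subsequential limit of those empirical measures is then a stationary distribution for the chain.

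To produce the coercivity I would split into two regimes according to proximity of $x_0$ to the axis set $\Ec\cap\Kc=\bigcup_{j\le J}\Span(e_j)$, which is fixed pointwise by every $B_b$, $b\in\Bc$, thanks to \eqref{eq:vanishCoordAxes}. In the regime $|\Pi_\Ec^\perp x_0|\le \delta_1 E$, the randomization offers no direct help, so I would instead invoke conditions (1) and (2) of Section \ref{sec:outline}, which by shrinking $\mathcal{S}$ if necessary hold uniformly for all $b\in\mathcal{S}\subseteq\mathring{\Bc}$. Freezing the first-step parameter at $b=b_1$ and applying the exit-time and hypoelliptic smoothing estimates developed in Section \ref{sec:exittimes} shows that with large probability the trajectory exits a $\delta_1 E$-neighborhood of $\Ec\cap\Kc$ within time $\sim\log E/E$, after which the argument reduces to the complementary regime.

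The novel ingredient — and the main obstacle — is the complementary regime $|\Pi_\Ec^\perp x_0|\ge \delta_1 E$. Here I would use the fact recalled in Section \ref{sec:introswitching}: for every $x\in\Kc\setminus\Ec$ and every $b_0\in\Bc$, the linear map $b\mapsto \Pi_\Kc^\perp B_b(x,x)$ is surjective onto $\Kc^\perp$ whenever $J<d$. By bilinearity in $x$ and compactness of the set $\{\hat x\in \Kc\cap S^{d-1}:|\Pi_\Ec^\perp\hat x|\ge\delta_1\}$, this upgrades to a uniform quantitative estimate: there exists $\kappa>0$, depending on $\delta_1$, $b_*$, $\mathcal{S}$ but not on $x_0$, such that
\begin{align*}
m_{\mathcal{S}}\bigl(\{b\in\mathcal{S}:|\Pi_\Kc^\perp B_b(x,x)|\ge\kappa|x|^2\}\bigr)\;\ge\;\kappa
\end{align*}
for every $x\in\Kc$ with $|\Pi_\Ec^\perp x|\ge\delta_1|x|$. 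On this positive-measure set of $b_1$'s, the deterministic component of the flow drives $|\Pi_\Kc^\perp\tilde x_t|\gtrsim E^2 t$ for small $t$, so the trajectory exits the $\delta_0 E$-neighborhood of $\Kc$ within time $\sim 1/E$. Once outside and at high energy, a Gronwall-type bootstrap, based on dominance of the nonlinearity over noise and linear damping on unit time scales, keeps the trajectory a definite distance from $\Kc$ for a constant fraction of a step, contributing $\sim E^2$ to the dissipation integral.

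The hard part is the technical bookkeeping in the second regime: upgrading the pointwise surjectivity of $b\mapsto \Pi_\Kc^\perp B_b(x,x)$ to the uniform quantitative bound above, extending it from $x\in\Kc$ to $x$ merely near $\Kc$ by perturbing and using continuity of $b\mapsto B_b$, and then pushing the resulting deterministic-escape estimate through the noise and damping uniformly over $t_1\in I$ at high energy. Once these are in hand, combining the two regimes and feeding the coercivity into the discrete-time Krylov–Bogoliubov scheme (as in Theorem \ref{thrm:abstract} and Proposition \ref{prop:dynCondImpliesExist1}) yields the stationary distribution claimed in Theorem \ref{thm:mainswitched}.
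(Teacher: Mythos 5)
Your key new idea---replacing the paper's Lemma~\ref{lem:SwitchingTransverse} with a direct quantitative surjectivity bound on the linear map $b\mapsto\Pi_\Kc^\perp B_b(x,x)$---is a genuine and arguably simpler alternative. The paper instead applies the Transversality Theorem to the family $F(t,b)=\varphi_b(t,x)$, concluding that for a.e.\ $b$ the trajectory meets $\Kc$ transversally on a short time interval, hence by analyticity some higher Taylor coefficient at $t=0$ has a nonzero $\Kc^\perp$-component. Your version produces the $j=1$ coefficient directly for a uniformly positive-measure set of $b$, which is cleaner and avoids the isolated-zeros/Taylor argument. The pointwise surjectivity is exactly the observation recorded in Section~\ref{sec:introswitching}, and the compactness upgrade you sketch is correct: for $x\in\Kc\cap S^{d-1}$ away from $\Ec$ the smallest singular value of $L_x$ is bounded below, so the tube $\{b\in\mathcal{S}:|L_x(b)|<\kappa\}$ has small $m_\mathcal{S}$-measure for $\kappa$ small, uniformly in $x$; the extension to $x\in\Kd$ merely near $\Kc$ and the trivial $j=0$ case when $|\Pi_\Kc^\perp x|\gtrsim\delta$ then cover all of $\Kd$.

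Where the proposal has a genuine gap is the phrase ``after which the argument reduces to the complementary regime.'' After exiting the $\Ec$-neighborhood at time $\sim \log E/E\ll 1/2$, the coefficient is still $b_1$: your surjectivity argument needs a \emph{fresh} sample of $b$, which only occurs at the switch time $t_1\in[1/2,3/2]$, and by then the trajectory may have wandered back near $\Ec$, where the surjectivity bound degenerates. This is exactly what Lemma~\ref{lem:KdeltaTime} handles: it shows the rescaled process spends a $\gtrsim 1/|\log\epsilon|$ fraction of any unit interval in $\Kd$, which together with the uniform randomness of $t_1$ gives $\hat\P(\Phi_1\in\mathcal{U}_{|x_0|})\gtrsim 1/\log|x_0|$, so that the \emph{second} step's $b_2$ delivers dissipation; the whole thing is packaged as a two-step Lyapunov drift via Lemma~\ref{lem:discreteexistence}. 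Some occupation-time input of this kind is unavoidable and is missing from your sketch. A smaller issue: the claim of $\sim E^2$ dissipation over a constant fraction of a step is too strong---on the natural $E^{-1}$ timescale the nonlinear flow can rotate the state back to $\Kc$---and the paper (Remark 1.9(c) and the $-c_1|x_0|$ drift in the final proof) obtains only $\sim E$ per step from the short $E^{-1}$ window after each switch. That weaker rate, divided by the $\log E$ probability loss, still gives the $|x|/\log|x|$ drift needed for existence, but your stated estimate should be corrected accordingly.
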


\begin{remark} \
	\begin{itemize}
		\item[(a)] The proof of Theorem~\ref{thm:mainswitched} is mostly a corollary of the arguments needed to obtain Theorem~\ref{thm:main}. The key new ingredient is Lemma~\ref{lem:SwitchingTransverse}, which provides a modified version of the dynamical assumption (3)  (holding for any $J < d$) and makes rigorous the intuition given at the beginning of this section.
		\item[(b)] Switching the nonlinearity on random, rather than fixed, time intervals is not related to adapting assumption (3). Instead, it is used just to ensure that the switches occur with sufficiently high probability when trajectories are not too close to $\Ec$. It is likely that the switching times could be made deterministic at the cost of complicating Lemma~\ref{lem:KdeltaTime}, but for the sake of simplicity we did not pursue this.
		\item [(c)] Given the assumption $\lim_{E \to \infty} T(E) = 0$ in Theorem~\ref{thrm:abstract} and that the natural timescale of $\dot{x} = B(x,x)$ is $E^{-1}$ when $|x| = E$, it might be surprising that in Theorem~\ref{thm:mainswitched} we are able to switch the nonlinearity on approximately unit time intervals that do not depend on $|x|$. This is because while we will only extract dissipation on a small $E^{-1}$ portion of the time interval following each switch, when $|\Pi_{\Kc}^\perp x_t| \approx E$ this is more than enough to outweigh the energy input by the noise. It is true, however, that switching the nonlinearity on shorter timescales at high energy would allow us to prove sharper estimates on the stationary measure (e.g., moment bounds closer to or matching those in Theorem~\ref{thrm:abstract}). 
	\end{itemize}
\end{remark}

\subsection{Previous work and discussion}

As mentioned above, the first and fourth authors already considered the problem of constructing a stationary measure for \eqref{eq:SDE} in \cite{BedrossianLiss22}. They were able to develop sufficient conditions implying the hypothesis of a generalized Theorem~\ref{thrm:abstract} that could be checked in examples including the Galerkin Navier-Stokes equations on a period box, Lorenz 96, and the Sabra shell model, but in all cases restricted to when $\mathrm{dim}(\mathrm{ker}A)$ is relatively small. There are various earlier works besides \cite{BedrossianLiss22} that have considered the existence of stationary measures for examples of \eqref{eq:SDE} and related partially damped systems. Those that consider settings most similar to the one here are \cite{CamrudThesis, FoldesHerzogGlatt21, BrendanThesis, HerzogMatt24}. The works \cite{CamrudThesis, FoldesHerzogGlatt21} both prove results for low-dimensional models, specifically the existence of stationary measures for Lorenz 96 in 4d with two damped modes \cite{CamrudThesis} and the Lorenz 63 model with a single axis of unstable fixed points left undamped \cite{FoldesHerzogGlatt21}. The previous work perhaps most related to ours is \cite{BrendanThesis}, which considers exactly \eqref{eq:SDE} in general dimension, but with $B$ assumed just to satisfy \eqref{eq:energyPres}. In the same spirit as our Proposition~\ref{prop:dynCondImpliesExist1}, the author proves a general existence theorem under a set of assumptions that include the only deterministic trajectories remaining in the undamped region for all time consisting precisely of spectrally unstable fixed points of $B$ (i.e., a version of our assumptions (1) and (3)). However, in addition to using elliptic noise, the result assumes a special structure of the polynomials 
$$P_{j+1}(x):=\frac{d^j}{dt^j}x(t)|_{t = 0}, \quad x(0)= x \not \in \Kc\cap \Ec$$
that is much stronger than our dynamical assumption (3) and not true in typical examples.\footnote{Specifically, it is assumed that there is some $n_* \in \N$ such that for every $x \not \in \Kc \cap \Ec$ there exists $n \le n_*$ for which $|\Pi_{\Kc}^\perp P_{n+1}(x)| \gtrsim |x|^n d(x,\Kc \cap \Ec)$. This is easily seen to be false, for instance, in typical shell models, even in cases where our assumption (3) can be checked quickly by hand. This modified version of (3) was required in \cite{BrendanThesis} primarily to compensate for an analysis of the dynamics in a neighborhood of the undamped fixed points that was based purely on approximating by the linearization.} Together with the practical challenge of proving the instability of equilibria in fixed models, the sufficient conditions in \cite{BrendanThesis} were not verified in any high-dimensional examples. On the other hand, the general result in \cite{BrendanThesis} does allow for a manifold of undamped equilibria much more complicated than just the coordinate axes. 

The very recent paper \cite{HerzogMatt24} studied partial damping in the setting of the ``randomly split'' models introduced in \cite{MattinglySplit1} and provided the first examples of fluid-like systems where existence of a stationary measure could be established with a fixed number of modes damped and dimension arbitrary. These models split the conservative vector field into a sum of simpler interactions (often each giving rise to an exactly solvable or integrable system) that respect the systems conservation laws and can be thought of as natural building blocks of the original dynamics. The associated stochastic system is then defined by cycling through the flow of each individual interaction of the splitting over random, independent time intervals. Existence of a stationary measure for random splittings of Galerkin Euler and Lorenz 96 is proven in \cite{HerzogMatt24} when just a few degrees of freedom are damped. The random splittings and setting of our Theorem~\ref{thm:mainswitched} share some similarities in that they both involve a Markov chain obtained by composing dynamics with a different conservative vector field on each timestep, but how this property is utilized is different in the two cases. The proofs in \cite{HerzogMatt24} that sufficient energy transfers to the damped modes use the solvability of the individual interactions and leverage some rare stochastic realizations allowed because of the splitting to simplify the overall dynamics. This should be contrasted with the ``random switching'' in our Theorem~\ref{thm:mainswitched}. The flows that we compose to form our Markov chain $\Phi_n$ involve the full evolution of \eqref{eq:SDE} and hence do not simplify the dynamics of any given $B_b$, but cycling between different nonlinearities is used crucially to break possible invariant sets in $\Kc$ when only a single mode is damped.


The study of partial dissipation and related problems also extends outside of \eqref{eq:SDE} and its variants. For example,  partially damped and hypoelliptic equations arise naturally in Langevin dynamics and related Hamiltonian systems, where forcing and dissipation act only on the momentum variable. Exponential convergence to equilibrium was proven in \cite{HerzogMatt19} for the Langevin SDE of many particles interacting via singular potentials by constructing a nontrivial Lyapunov function.\footnote{Note that in this setting there is an exact formula for the invariant Gibbs measure, and so existence is trivial.} Other results for Hamiltonian systems include  \cite{CuneoEckmannHairer18,HairerMattchain09}, which establish the existence of a stationary measure for systems of anharmonic coupled oscillators that interact with Langevin heat baths through select modes (see also the related work \cite{Eckmann19}). Another body of work connected to partial damping considers stabilization by noise for systems with deterministic trajectories that exhibit finite-time blow up; see e.g. \cite{AthreyaMattingly, HerzogMatt15I,HerzogMatt15II,HerzogWehr12}. In these papers, existence of a unique stationary distribution is proven despite the underlying deterministic trajectories not always being globally defined. Even when all directions are damped, establishing such a result shares many similarities with the problem considered here in that one must show that trajectories escape the regions of phase space that exhibit finite-time blow up sufficiently fast.


\vspace{0.2cm}

\noindent \textbf{Acknowledgments}: The authors thank Sam Punshon-Smith for many useful discussions in the early stages of this project. 

This material is based upon work supported by the National Science Foundation under Grant Nos. DMS-2038056 (author J.B.); DMS-2009431 and DMS-2237360 (author A.B.), and DMS-2108633 (author K.L.). 


\section{Existence of stationary measures}
\label{sec:existsStatMeas}

The plan in this section is to prove Proposition~\ref{prop:dynCondImpliesExist1}. Throughout, $B$ denotes a bilinear vector field in the constraint class $\Bc$ that satisfies the dynamical assumptions (1) -- (3) introduced in Section~\ref{sec:outline}. We will begin in Section~\ref{sec:rescaled} by introducing a rescaling of \eqref{eq:SDE} that is natural to use for high-energy initial conditions and which we will employ in our analysis for the remainder of the section. Then, in Section~\ref{sec:exittimes} we prove an estimate on the exit time of solutions to \eqref{eq:SDE} from the vicinity of the unstable fixed points in $\Ec$. This will be the main technical step in checking the hypotheses in Theorem~\ref{thrm:abstract}.  The proof of Proposition~\ref{prop:dynCondImpliesExist1} using the exit time estimates is finally completed in Section~\ref{sec:finishproof}.

\subsection{High-energy rescaling of SDE} \label{sec:rescaled}

Recall that to prove Proposition~\ref{prop:dynCondImpliesExist1} it is sufficient to verify the hypotheses of Theorem~\ref{thrm:abstract} with 
\begin{equation} \label{eq:timescale}
	T(E) = \frac{\log E}{E} \quad \text{ and } \quad F(E) = \frac{E^2}{\log E}.
\end{equation}
As we need to prove \eqref{eq:coercivity} only for $E \gg 1$, it will be more convenient to work with a high-energy rescaling of \eqref{eq:SDE}. Fix $x _0 \in \R^n$ and let $|x_0| = E \gg 1$. If $x_t$ solves \eqref{eq:SDE}, then $\frac{1}{E} x_{t/E}$ is equal in law to the solution $\bar{x}_t$ of 
$$
		\dee \bar{x}_t = B( \bar{x}_t,  \bar{x}_t)\dt - \frac{1}{E} A  \bar{x}_t \dt + \frac{1}{E^{3/2}}\sum_{j=1}^d \sigma_j e_j \dee W_t^{(j)}
$$
with an initial condition  $\bar{x}_0$ lying on the unit sphere $\S^{d-1}$. Therefore, for $\epsilon \in (0,1)$ we introduce the following rescaling of \eqref{eq:SDE}:
\begin{equation} \label{eq:scaledSDE}
	\begin{cases}
		\dee x^\epsilon_t = B(x^\epsilon_t, x^\epsilon_t)\dt - \epsilon A x^\epsilon_t \dt + \epsilon^{3/2}\sum_{j=1}^r \sigma_j e_j \dee W_t^{(j)}, \\
		x^\epsilon_t|_{t=0} = x_0.
	\end{cases}
\end{equation}
It is straightforward to check that in order to verify \eqref{eq:coercivity} with the definitions \eqref{eq:timescale}, it suffices to show that there are constants $\delta_0, \epsilon_0,c_0,C_0 > 0$ such that for every $\epsilon \in (0,\epsilon_0)$ and $x_0 \in \S^{d-1}$ with $|\Pi_\Kc^\perp x_0| \le \delta_0$ we have 
\begin{equation} \label{eq:scaledcoercivity}
	\E \int_0^{C_0|\log \epsilon|} A x^\epsilon_t \cdot x^\epsilon_t \dt \ge c_0.
\end{equation}
Our goal in the remainder of this section is thus to prove \eqref{eq:scaledcoercivity}.

Before proceeding we establish some notation. We denote the Markov transition kernel associated with \eqref{eq:scaledSDE} by $P_t^\epsilon(x,\cdot)$, defined for $t > 0$, $x \in \R^d$, and a Borel subset $U \subseteq \R^d$ by
$$ P_t^\epsilon(x,U) = \P(x_t^\epsilon \in U| x_0 = x).$$
The associated Markov semigroup $P_t^\epsilon: B_b(\R^d) \to B_b(\R^d)$, which acts on the space $B_b(\R^d)$ of bounded, Borel measurable functions, is defined for $f \in B_b(\R^d)$ by
$$P_t^\epsilon f(x) = \int_{\R^d} f(x')P_t^\epsilon(x,\dee x') = \E (f(x^\epsilon_t)|x_0 = x).$$
Note that by assumption (2), for any initial condition $x_0 \in \R^d$ and $t > 0$, the law of \eqref{eq:scaledSDE} has a smooth density with respect to Lebesgue measure. We will denote this density by $p_\epsilon(t,x_0,x)$. That is, $p_\epsilon$ is such that  $P_t^\epsilon(x_0,\dx) = p_\epsilon(t,x_0,x)\dx$. Lastly, throughout this entire section, we write $a \leqc b$ if $a \le Cb$ for a constant $C$ that is independent of $\epsilon \in (0,1)$ and any other relevant parameters.

\subsection{Quantitative exit time estimates for hypoelliptic diffusions} \label{sec:exittimes}

The  constraint class $\Bc$ guarantees the existence of fixed points of $B$ that lie in $\Kc$, namely, the elements of $\cup_{i=1}^J \mathrm{span}(e_i) \subseteq \Ec$. In order to verify \eqref{eq:scaledcoercivity} for an initial condition $x_0$ nearby such a fixed point $\bar{x}$, we will require an estimate on the exit time of the rescaled process \eqref{eq:scaledSDE} from a small neighborhood of the unstable equilibria comprising $\Ec$. For $\delta \in (0,1)$ define the compact set
\begin{equation} \Kd = \{x \in \R^d:  \mathrm{dist}(x,\Ec) \ge \delta \text{ and } 1/2 \le |x| \le 3/2\} 
\end{equation}
Then, for $x_0 \in \R^d$ we define the stopping time 
\begin{equation}
	\tau^\epsilon_\delta(x_0,\omega) = \inf\{t \ge 0: x^\epsilon_t(\omega) \in \Kd\},
\end{equation}
where $x^\epsilon_t$ denotes the solution of \eqref{eq:scaledSDE} with initial condition $x_0$. The main result of this section is the following lemma.


\begin{lemma} \label{lem:scaledexit}
There exist constants $C,c, \delta > 0$ such that for all $\epsilon$ sufficiently small and $x_0 \in \S^{d-1}\setminus \Kd$ we have 
	\begin{equation} \label{eq:stoppingtimebound}
		\P(\tau^\epsilon_\delta(x_0,\omega) \le C|\log \epsilon|) \ge c.
	\end{equation}
\end{lemma}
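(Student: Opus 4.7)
The plan is to reduce to a neighborhood of a single equilibrium $\bar x \in \Ec \cap \S^{d-1}$ and combine a quantitative hypoellipticity estimate with the exponential instability from assumption (1). Since $\Ec \cap \{1/2 \le |x| \le 3/2\}$ is compact, a finite cover reduces the proof to the following: fix $\bar x = e_j$ for some $j$ and establish \eqref{eq:stoppingtimebound} uniformly for $x_0$ in a small neighborhood of $\bar x$. Setting $y_t = x^\epsilon_t - \bar x$ gives
\[ dy_t = L y_t\, dt + B(y_t,y_t)\, dt - \epsilon A(\bar x + y_t)\, dt + \epsilon^{3/2}\sum_i \sigma_i e_i\, dW_t^{(i)}, \]
with $L := 2B(\bar x,\cdot)$. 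Energy conservation implies $L(\R^d)\subset T_{\bar x}\S^{d-1}$, the divergence-free condition \eqref{eq:divFree} gives $\tr(L|_{T_{\bar x}\S^{d-1}}) = 0$, and assumption (1) then forces $L|_{T_{\bar x}\S^{d-1}}$ to have at least one eigenvalue with strictly positive real part. Let $E^u \subset T_{\bar x}\S^{d-1}$ be its unstable subspace and $\lambda > 0$ the smallest positive real part.

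The argument has two phases. In the \emph{hypoelliptic injection} phase we run the SDE up to a unit time $t_0$ and show that, with probability bounded below uniformly in $\epsilon$, $|\Pi_{E^u} y_{t_0}| \gtrsim \epsilon^{3/2}$ while $|y_t| \le \rho$ throughout $[0,t_0]$ for a small fixed $\rho$. The confinement bound follows from the energy identity together with the $\epsilon^{3/2}$ noise scaling; the lower bound on $|\Pi_{E^u} y_{t_0}|$ rests on a quantitative upgrade of assumption (2), namely that the Malliavin matrix at time $t_0$ has smallest eigenvalue $\gtrsim \epsilon^3$ uniformly in $x_0$ near $\bar x$, combined with a small-ball / Gaussian comparison argument (or a Norris-type lemma) to convert this into a probabilistic lower bound on $|\Pi_{E^u} y_{t_0}|$. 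In the \emph{exponential amplification} phase we run for an additional time $T_1 = C|\log\epsilon|$ and write, by variation of constants,
\[ \Pi_{E^u} y_t = e^{L(t-t_0)}\Pi_{E^u} y_{t_0} + \int_{t_0}^t e^{L(t-s)}\Pi_{E^u}\bigl[B(y_s,y_s) - \epsilon A(\bar x + y_s)\bigr]\, ds + M_t, \]
where $M_t$ is a martingale of typical size $\epsilon^{3/2}$. A bootstrap on the event $\{|y_t| \le \rho\}$ shows that while $|\Pi_{E^u} y_t| \le \delta$ the leading term dominates, yielding $|\Pi_{E^u} y_t| \gtrsim |\Pi_{E^u} y_{t_0}|\, e^{\lambda(t-t_0)}$ with high probability; choosing $C > 3/(2\lambda)$ drives the unstable coordinate to magnitude $\delta$ by time $t_0+T_1$. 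The endpoint has the form $\bar x + \delta v + O(\delta^2)$ with $v \in E^u$ a unit vector, hence its distance to $\Span(\bar x)$ is $\gtrsim \delta$; even if $v \propto e_i$ with $i \ne j$, its distance to $\Span(e_i)$ stays $\approx 1$, so $\mathrm{dist}(x^\epsilon_{t_0+T_1}, \Ec) \ge \delta$ after possibly shrinking $\delta$. Energy fluctuations over time $O(|\log\epsilon|)$ are $O(\epsilon|\log\epsilon|) = o(1)$, so $|x^\epsilon_{t_0+T_1}| \in [1/2, 3/2]$ and therefore $x^\epsilon_{t_0+T_1} \in \Kd$.

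The main obstacle is the quantitative hypoellipticity estimate in phase one: parabolic H\"ormander as stated in assumption (2) is purely qualitative, and obtaining a polynomial-in-$\epsilon$ lower bound on the Malliavin covariance that is uniform in the base point requires explicitly tracking how many iterated Lie brackets are needed to span each direction and checking that the only $\epsilon$-powers arise from the initial diffusion coefficients (since each bracket with the drift $B(x,x) - \epsilon A x$ is essentially $\epsilon$-free up to subleading corrections). This step is inseparable from phase two, in line with Remark~(b) of Section~\ref{sec:outline}: an algebraically unstable equilibrium would not suffice, because the noise scale $\epsilon^{3/2}$ can be amplified to order one within a logarithmic time window only when the instability is genuinely exponential.
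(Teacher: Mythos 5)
Your proposal identifies the right two mechanisms (quantitative hypoellipticity to seed an unstable perturbation, exponential instability to amplify it), but the way you combine them in Phase~2 has a genuine gap. In the variation-of-constants identity you write, the martingale $M_t$ is not subdominant: its standard deviation grows like $\epsilon^{3/2}e^{\lambda(t-t_0)}$, which is exactly the same order as the signal $|e^{L(t-t_0)}\Pi_{E^u}y_{t_0}|$ when $|\Pi_{E^u}y_{t_0}|\approx \epsilon^{3/2}$. The claim ``while $|\Pi_{E^u}y_t|\le\delta$ the leading term dominates'' is therefore false; the post-$t_0$ noise can cancel the unstable seed, and a direct bootstrap on this formula does not give escape with probability bounded below. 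The paper avoids this by never running the noisy variation-of-constants estimate around the deterministic fixed point. It instead constructs a \emph{random} center-stable manifold (Lemma~\ref{lem:centerstable}, a Perron-type fixed-point argument) and compares a pair of trajectories $\varphi^\epsilon_\omega(t,\Psi^\epsilon_\omega(x)+v)$ and $\varphi^\epsilon_\omega(t,\Psi^\epsilon_\omega(x))$ driven by the \emph{same} noise realization; the noise cancels in the difference, so the unstable growth in Lemma~\ref{lem:unstablepert} is a purely deterministic cone-preservation statement with no martingale term to contend with.

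Your Phase~1 is also substantively different from the paper's and is the step you flag as ``the main obstacle.'' The paper does not go through a Malliavin-matrix lower bound plus a Norris-type or small-ball anticoncentration argument. Instead it proves a quantitative $L^1\to L^2$ transition-density smoothing estimate $\|p_\epsilon(1,x_0,\cdot)\|_{L^2}\lesssim\epsilon^{-p}$ (Lemma~\ref{lem:densitysmoothing}, via Hörmander's inequality and an energy/commutator iteration), and then bounds the probability of the time-$1$ state lying in an $\epsilon^m$-thin tubular neighborhood $\mathcal{U}_{\epsilon^m}$ of the random manifold graph by Hölder: $\|p_\epsilon\|_{L^2}|\mathcal{U}_{\epsilon^m}|^{1/2}\le\epsilon$ for $m$ large. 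This circumvents both the uniform Malliavin lower bound and the anticoncentration step you sketch, and it dovetails cleanly with the random-manifold decomposition: outside the tube, $v(x)=\Pi_u x-\psi^\epsilon_\omega(\Pi_{sc}x)$ has $|v(x)|\ge\epsilon^m$, which Lemma~\ref{lem:unstablepert} amplifies to size $\delta$ within time $C|\log\epsilon|$. So the overall picture you have is right, but both the seeding step and the amplification step as you wrote them would need to be reorganized essentially into the paper's decomposition to close.
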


The asymptotic properties in the small noise limit of the exit time and location of a diffusion process from the vicinity of an unstable, hyperbolic fixed point have been examined previously in some detail \cite{Kifer1981, Bakhtin2008, BakhtinNormalforms}. Heuristically, the fact that $|\log \epsilon|$ is both the natural and optimal scaling in $\epsilon$ for \eqref{eq:stoppingtimebound} to hold with $c$ independent of $\epsilon$ and $x_0$ is fairly clear. Indeed, provided that the diffusion is at least hypoelliptic, typical trajectories that begin on the stable manifold of $\bar{x}$ experience unstable perturbations on the order of $\epsilon^p$ for some positive power $p$. Therefore, due to the exponential instability, one expects there to be some $p,\lambda > 0$ such that $e^{\lambda \tau_\delta^\epsilon(x_0)} \epsilon^p \leqc 1$ with high probability and for the corresponding lower bound to also hold for certain choices of $x_0$. The previous works mentioned above provide estimates more precise than just capturing the logarithmic scaling in $\epsilon$, but they either assume that the diffusion is uniformly elliptic and the initial condition is exactly on the stable manifold \cite{Kifer1981, Bakhtin2008}, or that the diffusion is two-dimensional and the initial condition is already given a small unstable perturbation \cite{BakhtinNormalforms}. In proving Lemma~\ref{lem:scaledexit}, we must deal with the facts that the diffusion is merely hypoelliptic, the initial condition is entirely general, and each fixed point in $\Ec$ has a center manifold consisting of the radial direction.

The general structure of the proof of Lemma~\ref{lem:scaledexit} is to first construct a local-in-time (on the $|\log \epsilon|$ timescale), random center-stable manifold in a neighborhood of each fixed point in $\Ec \cap \S^{d-1}$. We then show that the unstable component of any random trajectory that starts off of this manifold grows exponentially fast using a straightforward argument based on the preservation of unstable cones. We conclude the proof by combining the steps above with a hypoelliptic smoothing estimate which implies that typical random trajectories quickly find themselves outside of an $\epsilon^p$-neighborhood ($p \gg 1$) of the random manifold, and hence escape on a $|\log \epsilon|$ timescale. 

\subsubsection{Hypoellipticity preliminaries}

We begin by establishing the needed quantitative-in-$\epsilon$ smoothing estimates. The bounds that we require are not especially precise, in the  sense that it is not necessary to capture the optimal regularization in each direction. Instead, we will just need an $L^1 \to L^2$ smoothing estimate that scales polynomially in $\epsilon$. As such, the estimates in this section are relatively straightforward  and probably clear to experts, but to our knowledge cannot be obtained as an immediate corollary of any estimates in the literature. Recall that we write $p_\epsilon(t,x_0,x)$ for the Lebesgue density of the law of $x_t^\epsilon$. The smoothing estimate we will require is given by lemma below.
\begin{lemma} \label{lem:densitysmoothing}
	There exist $C,p > 0$ such that for any $\epsilon \in (0,1)$ and $|x_0| \le 1$ we have
	\begin{equation}
		\|p_\epsilon(1,x_0,\cdot)\|_{L^2} \le C \epsilon^{-p}.
	\end{equation}
\end{lemma}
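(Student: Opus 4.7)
Since $p_\epsilon(1, x_0, \cdot)$ is a probability density, $\|p_\epsilon(1,x_0,\cdot)\|_{L^1} = 1$, so the interpolation $\|p\|_{L^2}^2 \leq \|p\|_{L^1}\|p\|_{L^\infty}$ reduces the statement to an $L^\infty$ bound of the form $\|p_\epsilon(1, x_0, \cdot)\|_{L^\infty} \leq C \epsilon^{-2p}$. My plan is to prove this $L^\infty$ bound via the Malliavin calculus integration by parts formula applied directly to the rescaled SDE \eqref{eq:scaledSDE}. Iterating the integration by parts formula $d$ times represents the density as
\[
p_\epsilon(1, x_0, y) \;=\; \E\bigl[\1_{\{x_1^\epsilon \leq y\}} \, H_{(1, \dots, 1)}\bigr],
\]
with componentwise inequality in the indicator and with $H_{(1,\dots,1)}$ the standard Malliavin weight (as in e.g.\ Proposition 2.1.4 of Nualart's book) built inductively from Skorohod integrals of products of first and second order Malliavin derivatives of $x_1^\epsilon$ with the inverse Malliavin covariance matrix $\mathcal{M}_1^{-1}$. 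The entire problem thus reduces to the single expectation estimate $\E |H_{(1,\dots,1)}| \lesssim \epsilon^{-2p}$.

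I would split this into two classes of estimates. First, control on the Malliavin derivatives: because the diffusion vector fields $\sigma_j e_j$ are constant, the Jacobian $J_t = Dx_t^\epsilon/Dx_0$ solves the pathwise-linear ODE $\dot J_t = DV_0(x_t^\epsilon) J_t$ with $V_0(x) = B(x,x) - \epsilon A x$, and likewise $K_t = J_t^{-1}$ solves $\dot K_t = -K_t DV_0(x_t^\epsilon)$. Energy estimates analogous to \eqref{eq:energyestimate} give uniform-in-$\epsilon$ moment bounds $\E|x_t^\epsilon|^q \lesssim 1$ for $|x_0| \leq 1$ and $t \leq 1$, and Gronwall then yields $\E(\|J_t\|^q + \|K_t\|^q) \lesssim 1$ uniformly in $\epsilon$ and $t \in [0,1]$. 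Since the first Malliavin derivative is
\[
D_s^j x_1^\epsilon \;=\; \epsilon^{3/2} \sigma_j J_1 K_s e_j,
\]
with similar expressions for higher orders, the Malliavin derivative contributions to $H_{(1,\dots,1)}$ yield only positive powers of $\epsilon$.

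The main obstacle is the inverse moment estimate on the Malliavin covariance matrix
\[
\mathcal{M}_1 \;=\; \epsilon^3 J_1 \left( \sum_{j=1}^{d} \sigma_j^2 \int_0^1 (K_s e_j)(K_s e_j)^T \, \ds \right) J_1^T,
\]
specifically to show $\E[\det \mathcal{M}_1^{-q}] \leq C_q \epsilon^{-Nq}$ for every $q \geq 1$, with a fixed $N$ uniform in $x_0$ with $|x_0|\leq 1$. The explicit $\epsilon^3$ prefactor already delivers the expected part of the $\epsilon^{-N}$ scaling, and the remaining task is a uniform-in-$\epsilon$ lower bound $\geq c_q$ (with probability $\geq 1 - \epsilon^q$) on the smallest eigenvalue of the bracketed matrix. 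Because $K_s e_j$ evolves pathwise linearly with coefficients affine in $x_s^\epsilon$, its short-time Taylor expansion at $s = 0$ reproduces the iterated Lie brackets of $V_0$ with the constant noise fields $\sigma_j e_j$, up to stochastic corrections small in $\epsilon$. By the uniform H\"ormander condition (dynamical assumption (2)), these iterated brackets span $\R^d$ with constants uniform in $\epsilon \in [0,1]$ on compact sets, and a Kusuoka–Stroock / Norris-type argument controlling the moments of the Taylor remainder then yields the required eigenvalue lower bound on $\mathcal{M}_1$. Combining this with the Malliavin derivative bounds delivers $\E|H_{(1,\dots,1)}| \lesssim \epsilon^{-2p}$, and hence the lemma. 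The technical heart of the argument is making the constants in the Norris-type step uniform in $\epsilon$, for which it is essential that assumption (2) holds over the entire family of perturbations $B + \epsilon M x$ rather than only at $\epsilon = 0$.
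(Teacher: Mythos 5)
Your approach is genuinely different from the paper's. The paper does not use Malliavin calculus at all: it dualizes, writing $\|p_\epsilon(1,x_0,\cdot)\|_{L^2} = \sup_{\|f\|_{L^2}\le 1}|P_1^\epsilon f(x_0)|$, and then proves an $L^2\to L^\infty(B_1)$ bound on the semigroup by energy estimates on the backward Kolmogorov equation $\partial_t g = \mathcal{L}_\epsilon g$, iterating a quantitative, $\epsilon$-uniform H\"ormander subelliptic inequality (Lemma~\ref{lem:generalhormander}) through a nested sequence of space--time cutoffs. A crucial trick in that argument is that the radial cutoffs commute with the dominant drift because $B(x,x)\cdot\grad\varphi_j = 0$, which lets the iteration run without ever facing unbounded drift terms. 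Your Malliavin route avoids all of that machinery but confronts the same difficulty in a different guise, and I think two steps in your sketch are understated.

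First, the claim that ``Gronwall then yields $\E(\|J_t\|^q + \|K_t\|^q)\lesssim 1$'' from the polynomial moment bound $\E|x_t^\epsilon|^q\lesssim 1$ is not correct as stated. The Jacobian ODE $\dot J_t = DV_0(x_t^\epsilon)J_t$ has coefficients of size $\sim|x_t^\epsilon|$, so Gronwall gives $\|J_t\|\lesssim\exp(C\int_0^t|x_s^\epsilon|\,\ds)$, and controlling $\E\|J_t\|^q$ requires \emph{exponential} moments of $\int_0^t|x_s^\epsilon|\,\ds$, not polynomial ones. This is salvageable here because the noise is of size $\epsilon^{3/2}$: on the $1-e^{-c/\epsilon}$-probability event that $\sup_{t\le 1}\epsilon^{3/2}|W_t|\le\epsilon$, Lemma~\ref{lem:energyconservation} gives a pathwise bound $|x_t^\epsilon|\lesssim 1$ for $|x_0|\le 1$, and the complementary event can be absorbed; but this localization needs to be said, and it already embeds the energy-conservation structure that the PDE proof exploits through the cutoff trick. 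Relatedly, the standard Malliavin integration-by-parts machinery (Nualart's Proposition 2.1.4) assumes globally Lipschitz coefficients, whereas $B$ is quadratic, so even Malliavin smoothness of $x_1^\epsilon$ requires a truncation/localization argument that you do not mention.

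Second, and more seriously, the $\epsilon$-uniform lower bound on the bracketed matrix inside $\mathcal{M}_1$ is the entire theorem in disguise, and the assertion that ``a Kusuoka--Stroock / Norris-type argument then yields the required eigenvalue lower bound'' elides the same quantitative-in-$\epsilon$ difficulty that the paper resolves via Lemma~\ref{lem:generalhormander}. The iterated brackets $[V_j, V_0], [V_k, [V_j, V_0]], \ldots$ pick up a factor $\epsilon^{3/2}$ at each step, and the drift $V_0 = B - \epsilon A x$ itself depends on $\epsilon$; the Norris lemma constants need to be tracked through all of this, which is precisely where the polynomial-in-$\epsilon^{-1}$ rate comes from and is the nontrivial content of the lemma. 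So the route is viable and in principle delivers the same result, but the ``technical heart'' you flag is the whole proof, not a standard citation, and your proposal does not supply the argument the paper supplies (its $\epsilon$-uniform functional inequality) or an alternative of equivalent content.
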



Our proof of Lemma~\ref{lem:densitysmoothing} will use a functional inequality that follows from \cite{H67}. To state it, we first need to introduce some notation. Let $B_R = \{x \in \R^d: |x| < R\}$  and for an open subset $D$ of a Euclidean space, let $\mathcal{T}(D)$ denote the collection of smooth vector fields defined on $D$. In what follows we identify vector fields with differential operators. That is, for $X \in \mathcal{T}(\R^d)$ and $g:\R^d \to \R$ we write $Xg$ to mean $X \cdot \grad g$. Let $\{X_j\}_{j=1}^m \subseteq \mathcal{T}(\R^d)$. For $f \in C_0^\infty((1/4,2)\times B_2)$, we define the H\"{o}rmander norm
$$\|f\|^2_{\mathscr{X}} = \int_0^\infty \int_{\R^d} |f(t,x)|^2 \dx \dt + \sum_{j=1}^m \int_0^\infty \|X_j f(t,\cdot)\|_{L^2(\R^d)}^2 \dt$$
and the associated dual norm 
$$ \|f\|_{\mathscr{X}^*} = \sup_{\varphi \in C_0^\infty((1/4,2)\times B_2), \|\varphi\|_{\mathscr{X}} \le 1} \int_0^\infty \int_{\R^d} \varphi(t,x)  f(t,x)\dx \dt. $$
The functional inequality below follows from a careful reading of \cite{H67}.

\begin{lemma} \label{lem:generalhormander}
	Let $X, \tilde{X} \in \mathcal{T}(\R^d)$ and for $\epsilon \ge 0$ define $X_{0,\epsilon} = X + \epsilon \tilde{X}$. Suppose that $\{X_j\}_{j=1}^m \subseteq \mathcal{T}(\R^d)$ is such that the collection $\{X_{0,\epsilon}, X_1, \ldots, X_m\} \subseteq \mathcal{T}(\R^d)$ satisfies the parabolic H\"{o}rmander condition for every $\epsilon \in [0,1]$. Then, there exist $s,p, C > 0$ such that for all $\epsilon \in (0,1)$ and $f \in C_0^\infty((1/4,2)\times B_2)$ we have 
	\begin{equation}
		\|f\|_{H^s(\R^{d+1})} \le  C \epsilon^{-p}\left(\|f\|_{\mathscr{X}} + \|(\partial_t - X_{0,\epsilon})f\|_{\mathscr{X}^*}\right).
	\end{equation}
\end{lemma}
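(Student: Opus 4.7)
The plan is to revisit H\"ormander's original $L^2$ subelliptic argument from \cite{H67} and carry out each step quantitatively in $\epsilon$. Write $Z^\epsilon = \partial_t - X_{0,\epsilon}$ for the spacetime drift, and for $k \geq 0$ let $V_k^\epsilon$ denote the iterated-bracket family generated from $\{Z^\epsilon, X_1, \dots, X_m\}$. The uniform parabolic H\"ormander hypothesis guarantees the existence of an integer $N$, independent of $\epsilon \in [0,1]$, such that $V_N^\epsilon$ spans $\R^{d+1}$ at every point of $[1/4,2]\times\overline{B_2}$.

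The first step is the quantitative Kohn--H\"ormander bracket iteration: by induction on $k$ one shows there exist exponents $0 < \sigma_0 < \sigma_1 < \cdots < \sigma_N < 1$ and constants $C_k$ uniform in $\epsilon \in [0,1]$ such that for every $\varphi \in C_0^\infty((1/4,2)\times B_2)$ and every $Y \in V_k^\epsilon$,
$$
\|Y\varphi\|_{H^{\sigma_k - 1}(\R^{d+1})} \leq C_k \bigl(\|\varphi\|_{\mathscr{X}} + \|Z^\epsilon \varphi\|_{\mathscr{X}^*}\bigr).
$$
The inductive step is the standard fractional-derivative commutator trick combined with Young's inequality to absorb lower-order terms. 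Uniformity of the $C_k$ in $\epsilon$ comes from the fact that every $Y \in V_k^\epsilon$ is a polynomial expression in the coefficients of $X_{0,\epsilon} = X + \epsilon\tilde X$ and of $X_1, \dots, X_m$, whose $C^j$-norms on the compact spacetime domain are uniformly bounded over $\epsilon \in [0,1]$.

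Second, I would pass from bracket control to a Sobolev estimate on $\varphi$ itself using a partition-of-unity argument. On a finite open cover of the compact domain, I would select a $(d+1)$-tuple $\{Y_1^\epsilon,\dots,Y_{d+1}^\epsilon\} \subset V_N^\epsilon$ whose determinant $D^\epsilon(y) = \det(Y_1^\epsilon(y),\dots,Y_{d+1}^\epsilon(y))$ is nonzero, and write $\partial_{y_i} = \sum_j a_{ij}^\epsilon(y) Y_j^\epsilon$ via Cramer's rule with $|a_{ij}^\epsilon(y)| \lesssim |D^\epsilon(y)|^{-1}$. Paired with a smooth cutoff and the bracket estimate above, this yields
$$
\|\varphi\|_{H^{\sigma_N}(\R^{d+1})} \lesssim \bigl(\inf_{y} |D^\epsilon(y)|\bigr)^{-1} \bigl(\|\varphi\|_{\mathscr{X}} + \|Z^\epsilon \varphi\|_{\mathscr{X}^*}\bigr).
$$

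The main obstacle, and the sole source of the factor $\epsilon^{-p}$, is the quantitative lower bound on $|D^\epsilon(y)|$. Since each $Y \in V_N^\epsilon$ depends polynomially on $\epsilon$ of degree at most $N$, the determinant $D^\epsilon(y)$ is a polynomial in $\epsilon$ of degree at most $q = q(N,d)$, with $C^\infty$-coefficients in $y$ that are bounded uniformly on the compact domain. The hypothesis that the parabolic H\"ormander condition holds for \emph{every} $\epsilon \in [0,1]$ (including $\epsilon = 0$) allows one to select, locally near each $y_0$, a tuple from $V_N^\epsilon$ so that the polynomial $\epsilon \mapsto D^\epsilon(y_0)$ is not identically zero, and then, after refining the open cover, to arrange this uniformly on each piece. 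A quantitative Lojasiewicz-type inequality for polynomials of degree at most $q$ with coefficients bounded on a compact set then yields $|D^\epsilon(y)| \gtrsim \epsilon^p$ uniformly in $y$ for some $p = p(q) > 0$, and taking $s = \sigma_N$ completes the proof.
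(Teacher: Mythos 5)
There is a genuine gap at the Lojasiewicz step, which is where your proof places all of the $\epsilon$-dependence. You claim that because the determinant $D^\epsilon(y_0)$ is a polynomial of bounded degree $q$ in $\epsilon$ with coefficients bounded on the compact domain, and is not identically zero, a Lojasiewicz-type inequality yields $|D^\epsilon(y)| \gtrsim \epsilon^p$ uniformly in $y$. This is false: a nonzero polynomial in $\epsilon$ with bounded coefficients can vanish at interior points of $(0,1)$ (e.g., $P(\epsilon) = \epsilon - 1/2$), and no lower bound of the form $c\,\epsilon^p$ can hold on the whole interval. The hypothesis that the parabolic H\"ormander condition holds for every $\epsilon \in [0,1]$ only guarantees that \emph{some} $(d+1)$-tuple of iterated brackets spans at each $(\epsilon, y)$; it does not prevent the determinant of your \emph{fixed} tuple (chosen from its behavior at $\epsilon = 0$ or via ``not identically zero'') from passing through zero at some $\epsilon_0 \in (0,1)$, at which point a different tuple must be used. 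Your argument as written does not handle this degeneracy.

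The correct and in fact simpler route is a joint compactness argument: the hypothesis is posed on the closed interval $[0,1]$, so one should work on the compact set $K = ([1/4,2]\times\overline{B_2}) \times [0,1]$ in the $(y,\epsilon)$ variables. Each iterated bracket in $V_N^\epsilon$ has coefficients that are polynomial (hence continuous) in $\epsilon$, so the function $(y,\epsilon) \mapsto \max_{\text{tuples}} |D^\epsilon_{\text{tuple}}(y)|$ is continuous on $K$ and strictly positive there by hypothesis, hence bounded below by a uniform constant by compactness. This gives, on a finite cover, a choice of tuple (locally in $(y,\epsilon)$) with determinant uniformly bounded away from zero, and hence a constant in the Sobolev estimate with \emph{no} $\epsilon^{-p}$ loss from this step -- the $\epsilon^{-p}$ appearing in the Lemma statement is a non-optimal safe form, not a loss that has to be produced. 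Finally, note that the paper does not supply a written proof of this Lemma in the final text (it is attributed to ``a careful reading of \cite{H67}''); a commented-out earlier version of the argument, written for the drift $\partial_t - \epsilon X_0$ rather than $X + \epsilon\tilde X$, relies on the rescaling identity $[\epsilon^{-1}(\partial_t - \epsilon X_0), Y] = [X_0, Y]$ to track $\epsilon$, a mechanism that is not available in the present regime and is structurally different from your partition-of-unity/Cramer scheme.
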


With Lemma~\ref{lem:generalhormander} at hand, Lemma~\ref{lem:densitysmoothing} follows from some relatively standard energy estimates.

\begin{proof}[Proof of Lemma~\ref{lem:densitysmoothing}]
By duality,
$$\|p_\epsilon(t,x_0,\cdot)\|_{L^2} = \sup_{\|f\|_{L^2} \le 1} |P^\epsilon_t f(x_0)| \le \|P_{t}^\epsilon f\|_{L^\infty(B_1)}.$$
Therefore, it is sufficient to show that there exists $p > 0$ that does not depend on $\epsilon$ such that for any $f \in L^2(\R^d)$ we have 
\begin{equation} \label{eq:dualsmooth}
	\|P_1^\epsilon f\|_{L^\infty(B_1)} \leqc \epsilon^{-p}\|f\|_{L^2}.
\end{equation}
 Fix $f \in L^2(\R^d)$ and define $g:[0,\infty) \times \R^d \to \R$ by $g(t,x) = P_t^\epsilon f(x).$ Then, $g$ is a smooth solution to the PDE
	\begin{equation}
		\begin{cases}
			\partial_t g =  \mathcal{L}_\epsilon g, \\
			g|_{t=0} = f,
		\end{cases}
	\end{equation}
	where 
	$$\mathcal{L}_\epsilon = (B-\epsilon Ax) \cdot \grad + \frac{1}{2}\epsilon^3 \sum_{j=1}^d \sigma_j^2 \partial_{x_j}^2 $$
	is the generator of $P_t^\epsilon$. Note that defining $X_{0,\epsilon} = B - \epsilon Ax$ and $X_j = \sigma_j e_j$ we can write 
\begin{equation}\label{eq:horgenerator}
\mathcal{L}_\epsilon = X_{0,\epsilon} + \frac{1}{2} \epsilon^3 \sum_{j=1}^d X_j^2.
\end{equation}
	By Sobolev embedding, to prove \eqref{eq:dualsmooth} it suffices to show that for every $k \in \N$ there is some $q > 0$ and a smooth cutoff $0 \le \chi \le 1$ satisfying $\chi(x) = 1$ for $|x| \le 1$ and $\chi(x) = 0$ for $|x| \ge 2$ such that
	\begin{equation} \label{eq:dualsmooth2}
		\|\chi g(1,\cdot)\|_{H^k} \leqc \epsilon^{-q}\|f\|_{L^2}.
	\end{equation}
	
	We will now prove \eqref{eq:dualsmooth2}. First, note that from $\partial_t g = \mathcal{L}_\epsilon g$ and integration by parts, we have
	$$ \frac{d}{dt}\|g(t,\cdot)\|_{L^2(\R^d)} + \epsilon^3 \sum_{i=1}^d \|X_i g(t,\cdot)\|^2_{L^2(\R^d)} = \int_{\R^d}gX_{0,\epsilon} g \dx = \frac{1}{2} \epsilon \mathrm{Tr}(A)\|g\|_{L^2}^2,$$
	and hence by Gr\"{o}nwall's lemma there holds 
	\begin{equation} \label{eq:baseenergy}
		\|g\|_{L^\infty([0,2];L^2(\R^d))} \leqc \|f\|_{L^2}.
	\end{equation}
	Now for $j \in \N \cup \{0\}$, let $\psi_j:(1/4,2) \to [0,1]$ be a smooth time cutoff with $\psi_j(t) = 1$ for $|t-1| \le 2^{-(j+2)}$ and $\psi_j(t) = 0$ for $|t-1| \ge 2^{-(j+1)}$.  Let $\varphi_j:B_2 \to [0,1]$ be radially symmetric spatial cutoff satisfying $\varphi_j(x) = 1$ for $|x| \le 1+2^{-(j+2)}$ and $\varphi_j(x) = 0$ for $|x| \ge 1+2^{-(j+1)}$. Let $s>0$ be as in Lemma~\ref{lem:generalhormander} applied with $X_{0,\epsilon}$ and $\{X_j\}_{j=1}^d$ as defined in \eqref{eq:horgenerator}. Note that $s$ does not depend on $\epsilon$ and that this application of Lemma~\ref{lem:generalhormander} is justified by our hypoellipticity assumption (2). Let $\chi_j(t,x) = \psi_j(t)\varphi_j(x)$ and set 
	$$g_j = \langle D_{t,x} \rangle^{sj}(\chi_j g) \in C_0^\infty((1/4,2) \times B_2),$$
	where 
	$$\langle D_{t,x} \rangle = \sqrt{1+|\partial_t|^2 + |\grad|^2}$$
	and $|\partial_t|$ and $|\grad|$ are both defined in the natural way as Fourier multipliers. A direct computation using $\partial_t g = \mathcal{L}_\epsilon g$ and $B(x,x) \cdot \grad \varphi_j = 0$ gives, for any $j \in \N \cup \{0\}$, 
	\begin{equation} \label{eq:gjevolution}
		\begin{aligned}
		\partial_t g_j &= X_{0,\epsilon} g_j + \frac{\epsilon^3}{2} \sum_{i=1}^d X_i^2 g_j  \\ 
		 & \quad + \langle D_{t,x} \rangle^{sj}(\varphi_j g \partial_t \psi_j) + \epsilon \langle D_{t,x}
		\rangle^{sj} ([Ax\cdot \grad, \chi_j]g) + \langle D_{t,x} \rangle^{sj} \frac{\epsilon^3}{2} \sum_{i=1}^d [\chi_j,X_i^2]g \\ 
		& \quad + [\langle D_{t,x} \rangle^{sj},X_{0,\epsilon}] \chi_j g.
		\end{aligned}
	\end{equation}
	For convenience, define $\chi_{-1} \equiv 1$. Using \eqref{eq:baseenergy} and that $\chi_{j-1} \equiv 1$ on the support of $\chi_j$ for $j \ge 1$, we obtain the bounds
	\begin{align*}
		\left| \int_0^\infty \int_{\R^d} g_j \langle D_{t,x} \rangle^{sj}([Ax\cdot \grad, \chi_j]g) \dx \dt  \right| &= 	\left| \int_0^\infty \int_{\R^d} g_j \langle D_{t,x} \rangle^{sj}(\chi_{j-1}g Ax\cdot \grad \chi_j)\dx \dt  \right| \\ 
		& \leqc_j \|g_j\|_{L^2(\R^{d+1})}(\mathbf{1}_{j\ge 1}\|g_{j-1}\|_{H^s(\R^{d+1})} + \|f\|_{L^2}),
	\end{align*}
	\begin{align*}
	\left|\int_0^\infty \int_{\R^d} g_j \langle D_{t,x}\rangle^{sj}(\varphi_j g \partial_t \psi_j)\dx \dt \right|\leqc_j \|g_j\|_{L^2(\R^{d+1})}(\mathbf{1}_{j\ge 1}\|g_{j-1}\|_{H^s(\R^{d+1})} + \|f\|_{L^2}),
	\end{align*}
	and 
	\begin{align*}
	&\left| \int_0^\infty \int_{\R^d} g_j \langle D_{t,x}\rangle^{sj}[\chi_j, X_i^2]g \dx \dt\right|  = \left|\int_0^\infty \int_{\R^d} g_j \langle D_{t,x} \rangle^{sj}(\chi_{j-1}g X_i^2 \chi_j + 2 X_i g X_i \chi_j)\dx \dt \right| \\
	& \qquad \le \left|\int_0^\infty \int_{\R^d} g_j \langle D_{t,x} \rangle^{sj}(\chi_{j-1}g X_i^2 \chi_j)\dx \dt\right| + 2 \left|\int_0^\infty \int_{\R^d} X_i g_j \langle D_{t,x} \rangle^{sj}(\chi_{j-1} g X_i \chi_j) \dx \dt \right| \\ 
	& \qquad \qquad + \left|\int_0^\infty \int_{\R^d} g_j \langle D_{t,x} \rangle^{sj}(\chi_{j-1} g X_i^2 \chi_j) \dx \dt \right|\\ 
	& \leqc_j (\|g_j\|_{L^2(\R^{d+1})}+ \|X_i g_j\|_{L^2(\R^{d+1})})(\mathbf{1}_{j \ge 1}\|g_{j-1}\|_{H^s(\R^{d+1})}+\|f\|_{L^2}).
	\end{align*}
	Multiplying \eqref{eq:gjevolution} by $g_j$, integrating over $\R^{d+1}$, and applying the three bounds above then gives 
	\begin{equation} \label{eq:DissipationEnergyEstimate1}
		\epsilon^3 \sum_{i=1}^d \|X_i g_j\|^2_{L^2(\R^{d+1})} \leqc_j \|g_j\|_{L^2(\R^{d+1})}^2 + \mathbf{1}_{j \ge 1}\|g_{j-1}\|_{H^s(\R^{d+1})}^2 + \|f\|_{L^2}^2 + \left|\int_0^\infty \int_{\R^d} g_j [\langle D_{t,x} \rangle^{sj},X_{0,\epsilon}] \chi_j g \dx \dt \right|.
	\end{equation}
	Working on the Fourier side, a relatively standard commutator estimate shows that for any $h \in C_0^\infty((1/4,2)\times B_2)$ there holds 
	\begin{equation}
	\left|\int_0^\infty \int_{\R^d} h [\langle D_{t,x} \rangle^{sj},X_{0,\epsilon}] \chi_j g \dx \dt \right| \leqc \|h\|_{L^2} \|g_j\|_{L^2(\R^{d+1})}.
	\end{equation}
	Hence, we have 
	\begin{equation} \label{eq:DissipationEnergyEstimate2}
		\epsilon^3 \sum_{i=1}^d \|X_i g_j\|^2_{L^2(\R^{d+1})} \leqc_j \|g_j\|_{L^2(\R^{d+1})}^2 + \mathbf{1}_{j \ge 1}\|g_{j-1}\|_{H^s(\R^{d+1})}^2 + \|f\|_{L^2}^2 
	\end{equation}
	Pairing \eqref{eq:gjevolution} with a test function $\psi \in C_0^\infty((1/4,2) \times B_2)$ satisfying $\|\psi\|_{\mathscr{X}} \le 1$, integrating over $\R^{d+1}$, and applying essentially the same estimates as above gives
	\begin{equation} \label{eq:gjdual}
		\|(\partial_t -  X_{0,\epsilon})g_j\|_{\mathscr{X}^*} \leqc \epsilon^3 \sum_{i=1}^d \|X_i g_j\|_{L^2(\R^{d+1})} +\mathbf{1}_{j\ge 1} \|g_{j-1}\|_{H^s(\R^{d+1})} + \|f\|_{L^2} + \|g_j\|_{L^2(\R^{d+1})}.
	\end{equation}
	Putting together \eqref{eq:DissipationEnergyEstimate2} and \eqref{eq:gjdual}, we have proven that for all $j \in \N \cup \{0\}$ there holds
	$$ \|g_j\|_{\mathscr{X}} + \|(\partial_t -  X_{0,\epsilon})g_j\|_{\mathscr{X}^*} \leqc \epsilon^{-3/2}\left(\mathbf{1}_{j\ge 1} \|g_{j-1}\|_{H^s(\R^{d+1})} + \|f\|_{L^2} + \|g_j\|_{L^2(\R^{d+1})}\right).$$
	By Lemma~\ref{lem:generalhormander}, it follows that
	\begin{equation} \label{eq:Horiterate}
		\|g_j\|_{H^s(\R^{d+1})} \leqc \epsilon^{-3/2}\left(\mathbf{1}_{j\ge 1} \|g_{j-1}\|_{H^s(\R^{d+1})} + \|f\|_{L^2} + \|g_j\|_{L^2(\R^{d+1})}\right).
	\end{equation}
	By Young's inequality and \eqref{eq:baseenergy}, there exists $p > 3/2$ with the property for any $\delta > 0$ there is some $C_\delta \ge 1$ such that $$\epsilon^{-3/2}\|g_j\|_{L^2(\R^{d+1})} \le \delta \|g_j\|_{H^s(\R^{d+1})} + C_\delta \epsilon^{-p}\|f\|_{L^2}. $$
	Therefore, the term involving $g_j$ on the right-hand side of \eqref{eq:Horiterate} can be absorbed into the left-hand side, yielding 
	\begin{equation}
		\|g_j\|_{H^s(\R^{d+1})} \leqc \epsilon^{-p}\left(\mathbf{1}_{j\ge 1} \|g_{j-1}\|_{H^s(\R^{d+1})} + \|f\|_{L^2}\right).
	\end{equation}
	Iterating this bound gives \eqref{eq:dualsmooth2}, completing the proof.
\end{proof} 

\subsubsection{Random center-stable manifold}

For any $x_0 \in \S^{d-1}\setminus \Kd$, there exists some $e_j \in \Ec \cap \S^{d-1}$ such that $|x_0 - e_j| \leqc \delta$. In this section, for each of the finitely many $e_j$, we construct a random center-stable manifold for \eqref{eq:scaledSDE} near a line segment of fixed points containing $e_j$. Throughout the section, we fix a single $e_j \in \Ec \cap \S^{d-1}$ to consider and for convenience denote it by $\bar{x}$.

We begin with some notation and basic facts. The linearized dynamics near $\bar{x}$ are determined by the linear map $L_{\bar{x}}:\R^d \to \R^d$ defined by 
$$L_{\bar{x}} (x) = 2 B(\bar{x},x).$$
By the dynamical assumption (1), we have the decomposition
\begin{equation} \label{eq:linearsubs}
	\R^d = E_s \oplus E_u \oplus E_c, 
\end{equation}
where $E_{s}$ is the subspace spanned by the generalized eigenvectors of $L_{\bar{x}}$ corresponding to eigenvalues with negative real part, $E_{u}$ is the subspace spanned by the generalized eigenvectors corresponding to eigenvalues with positive real part, and $E_c = \mathrm{span}(\bar{x})$ is the center subspace. Moreover, $\mathrm{dim}(E_u) \neq 0$. We write $\Pi_{u}$, $\Pi_{s}$, and $\Pi_{c}$ for the associated projections and $$B_{r}^s = \{x \in E_s:|x| < r\}$$ 
for the open ball of radius $r$ centered at the origin in the stable subspace. For $x \in \R^d$ we also define
$$  B_r^s(x) = x + B_r^s$$
and use the corresponding notations with ``s'' replaced by ``u'' or ``ce''  for the closed balls in the unstable and center subspaces, respectively. Note that by  bilinearity, $\alpha \bar{x}$ is a fixed point of $B$ for any $\alpha > 0$ and the stable, unstable, and center subspaces do not change. Moreover, $L_{\alpha \bar{x}} = \alpha L_{\bar{x}}.$ We denote the random solution map associated with \eqref{eq:scaledSDE} by $(t,x) \mapsto \varphi^{\epsilon}_{\omega}(t,x)$. That is, $\varphi_\omega^\epsilon(t,x_0) = x_t^\epsilon(\omega)$, where $x^\epsilon_t$ solves \eqref{eq:scaledSDE} with initial condition $x_0 \in \R^d$. 

We will need a bound for $\varphi^\epsilon_\omega$ that is straightforward consequence of the energy conservation property of $B$.

\begin{lemma} \label{lem:energyconservation}
	Fix $T > 0$ and suppose that $\omega \in \Omega$ is such that 
	$$\sup_{0\le t \le T} \epsilon^{3/2}\sum_{j=1}^d |\sigma_j e_j W_t^{(j)}| \le \epsilon.$$
	There exists a constant $C \ge 1$ that does not depend on $\epsilon$ so that for any $x_0 \in \R^d$ we have 
	\begin{align*} 
		\sup_{0\le t \le T} |\varphi_\omega^\epsilon(t,x_0)| &\le e^{\epsilon C T}\left(|x_0| + C\sqrt{\epsilon}\right) \quad \text{and}  \\ 
		\inf_{0 \le t \le T} |\varphi_\omega^\epsilon(t,x_0)| &\ge e^{-\epsilon C T}|x_0| - C \sqrt{\epsilon}.
		\end{align*}
\end{lemma}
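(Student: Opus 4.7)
The main idea is to work pathwise: since the noise in \eqref{eq:scaledSDE} is additive, once we fix $\omega$ the trajectory $\varphi_\omega^\epsilon(\cdot,x_0)$ is just the solution of an ODE driven by $B$, $-\epsilon A$, and the continuous path $t\mapsto Z_t(\omega):=\epsilon^{3/2}\sum_j \sigma_j e_j W_t^{(j)}(\omega)$, which by hypothesis satisfies $\sup_{0\le t\le T}|Z_t|\le \epsilon$. The plan is to subtract off $Z_t$ to obtain a genuine ODE, then use the energy conservation $y\cdot B(y,y)=0$ together with the positive semi-definiteness of $A$ to run a Gr\"onwall argument for $|y_t|^2$.

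Concretely, set $x_t=\varphi_\omega^\epsilon(t,x_0)$ and $y_t:=x_t-Z_t$. Then $y_t$ solves the pathwise ODE
\[
\dot y_t \;=\; B(y_t+Z_t,y_t+Z_t)-\epsilon A(y_t+Z_t),\qquad y_0=x_0.
\]
Compute $\tfrac{d}{dt}|y_t|^2=2y_t\cdot\dot y_t$. Expanding $B(y+Z,y+Z)=B(y,y)+2B(y,Z)+B(Z,Z)$ by bilinearity/symmetry and invoking $y\cdot B(y,y)=0$ kills the leading cubic term, leaving
\[
\tfrac{d}{dt}|y_t|^2 \;=\; 4\,y_t\!\cdot\! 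B(y_t,Z_t)+2\,y_t\!\cdot\! B(Z_t,Z_t)-2\epsilon\, y_t\!\cdot\! A y_t-2\epsilon\, y_t\!\cdot\! A Z_t.
\]
The bilinearity bound $|B(u,v)|\lesssim |u||v|$, the bound $|A y\cdot Z|\lesssim |y|\epsilon$, and $|Z_t|\le\epsilon$ yield
\[
\bigl|\tfrac{d}{dt}|y_t|^2 + 2\epsilon\, y_t\!\cdot\! Ay_t\bigr| \;\lesssim\; \epsilon|y_t|^2+\epsilon^2|y_t|.
\]
For the upper bound we drop the non-negative dissipation $y\cdot Ay\ge 0$; for the lower bound we bound it above by $\|A\||y|^2$. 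Either way, dividing by $2|y_t|$ produces a linear differential inequality of the form $|\dot{|y_t|}|\lesssim \epsilon|y_t|+\epsilon^2$, and Gr\"onwall yields
\[
e^{-C\epsilon t}(|x_0|+\epsilon)-\epsilon \;\le\; |y_t| \;\le\; e^{C\epsilon t}(|x_0|+\epsilon).
\]
Finally, combining with the triangle inequality $\bigl||x_t|-|y_t|\bigr|\le |Z_t|\le\epsilon$ and absorbing the additive $O(\epsilon)$ into the (weaker) additive $O(\sqrt{\epsilon})$ allowed in the statement, we conclude the two claimed bounds with a new constant $C$.

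No serious obstacle arises: the only algebraic input is the energy cancellation \eqref{eq:energyPres}, the regularization of the noise from hypoellipticity plays no role here, and the dimensionality constraints on $\Kc$ are irrelevant. The only care needed is to track that the constants depend only on $\|B\|$ (on the unit ball) and $\|A\|$, not on $\epsilon$ or $x_0$.
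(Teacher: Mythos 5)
Your proof is correct and follows essentially the same route as the paper's: subtract the Brownian increment $Z_t$, use bilinearity plus the energy cancellation $y\cdot B(y,y)=0$ and the smallness $|Z_t|\le\epsilon$ to get a differential inequality, then Gr\"onwall and the triangle inequality. The only cosmetic difference is that you divide by $|y_t|$ to run Gr\"onwall on $|y_t|$ directly (obtaining an additive $O(\epsilon)$ error, which is sharper than the $O(\sqrt\epsilon)$ in the statement and requires a one-line remark to handle the set where $|y_t|=0$), whereas the paper keeps the inequality at the level of $|y_t|^2$, applies Young's inequality to replace $\epsilon^2|y_t|$ by $\epsilon|y_t|^2+\epsilon^2$, and takes square roots at the end, which is where the $\sqrt\epsilon$ comes from.
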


\begin{remark} \label{rem:energycorollary}
	A simple corollary of Lemma~\ref{lem:energyconservation} and some basic properties of Brownian motion is that for any $C \ge 1$ and $\gamma \in (0,1/2)$, there exists $\epsilon_0(C,\gamma) > 0$ such that for all $\epsilon \in (0,\epsilon_0)$ and $x \in \R^d$ with $1/2 \le |x| \le 3/2$ we have 
	$$\P\left(\sup_{0 \le t \le C|\log \epsilon|}(|\varphi_\omega^\epsilon(t,x)|-|x|)\le \gamma\right) \ge 1-\epsilon.$$
\end{remark}

\begin{proof}
	Let
	$$F^\epsilon_t = \epsilon^{3/2}\sum_{j=1}^d \sigma_j e_j W_t^{(j)}$$
	and define $u_t = x^\epsilon_t - F^\epsilon_t.$ Then, 
	$$\frac{du_t}{dt} = B(u_t,u_t) + 2B(u_t,F^\epsilon_t) + B(F^\epsilon_t,F^\epsilon_t) - \epsilon A u_t - \epsilon A F^\epsilon_t. $$
	Using $B(u_t,u_t)\cdot u_t = 0$ and $|F^\epsilon_t| \le \epsilon$, it is easy to show that there exists a constant $C \ge 1$ such that 
	$$-\epsilon^2 - \epsilon C |u_t|^2 \le \frac{d}{dt}|u_t|^2 \le C\epsilon|u_t|^2  + \epsilon^2. $$
	By Gr\"{o}nwall's lemma, it then follows that 
	\begin{equation}
		-\epsilon + e^{-\epsilon C t}|u_0|^2 \le |u_t|^2 \le e^{\epsilon Ct}(|u_0|^2 + \epsilon)
	\end{equation}
	for all $t \in [0,T]$. Taking the square root of this inequality and noting that $u_0 = x_0$ gives 
	\begin{equation}
		e^{-\epsilon C t}|x_0| - \sqrt{\epsilon} \le |u_t| \le e^{\epsilon Ct}(|x_0| + \sqrt{\epsilon}).
	\end{equation}
	The result then follows from the fact that $|u_t| - \epsilon \le |x_t^\epsilon| \le |u_t| + \epsilon$.
\end{proof}

We now use a contraction mapping argument to construct the random center-stable manifold described earlier.

\begin{lemma} \label{lem:centerstable}
	Fix $\bar{x} \in \Ec \cap \S^{d-1}$. If $\delta > 0$ is sufficiently small, then for any $C_0 \ge 1$ there exists $\epsilon_0(\delta, C_0) \in (0,1)$ such that for every $\epsilon \le \epsilon_0$ there is a set $\Omega_\epsilon \subseteq \Omega$ with the following properties:
	\begin{itemize} 
		\item $\P(\Omega_\epsilon) \ge 1-\epsilon$; 
		\item Let $T_\epsilon = C_0|\log \epsilon|$. For every $\omega \in \Omega_\epsilon$ there exists a continuous function $\psi^\epsilon_\omega: B_{\delta}^s + B_{1/2}^{ce}(\bar{x}) \to B_{\delta}^u$ such that for every $x_s \in B_\delta^s$ and $x_c \in B_{1/2}^{ce}(\bar{x})$ there holds 
	\begin{equation} \label{eq:manifold1}
		\sup_{0\le t \le T_\epsilon}|\Pi_s \varphi_\omega^\epsilon(t,x_s + x_c + \psi^\epsilon_\omega(x_s+x_c))| + \sup_{0\le t \le T_\epsilon}|\Pi_c \varphi_\omega^\epsilon(t,x_s + x_c + \psi^\epsilon_\omega(x_s+x_c)) - x_c| \le C \delta
	\end{equation}
	and 
	\begin{equation} \label{eq:manifold2}
	\sup_{0\le t \le T_\epsilon}|\Pi_u \varphi_\omega^\epsilon(t,x_s + x_c + \psi^\epsilon_\omega(x_s+x_c))| \le C \delta^2, 
	\end{equation}
	where $C \ge 1$ is a constant that is independent of $\delta$, $C_0$, and $\epsilon$.
	\end{itemize}
\end{lemma}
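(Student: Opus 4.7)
The plan is a Lyapunov--Perron fixed point construction for a random center-stable manifold, carried out on a high-probability event where the noise is small. Let $F^\epsilon_t = \epsilon^{3/2} \sum_{j=1}^d \sigma_j e_j W_t^{(j)}$ and set
\[
\Omega_\epsilon = \Bigl\{\omega : \sup_{0 \leq t \leq T_\epsilon} |F^\epsilon_t(\omega)| \leq \epsilon \Bigr\}, \qquad T_\epsilon = C_0 |\log \epsilon|.
\]
Gaussian tail bounds give $\PP(\Omega_\epsilon) \geq 1 - \epsilon$ for $\epsilon$ sufficiently small, since the typical size of $\sup_{t \leq T_\epsilon} |F^\epsilon_t|$ is $O(\epsilon^{3/2}\sqrt{|\log \epsilon|})$. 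On $\Omega_\epsilon$, the substitution $y_t = x^\epsilon_t - F^\epsilon_t$ converts \eqref{eq:scaledSDE} into a pathwise ODE $\dot y_t = B(y_t, y_t) + G^\epsilon_t(y_t)$, where $G^\epsilon_t$ collects forcing and damping terms of norm $O(\epsilon)$ on $\{|y_t| \leq 2\}$.

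Using $B(\bar x, \bar x) = 0$ and bilinearity, I decompose $y_t = \alpha_t \bar x + z_t$ with $z_t \in E_s \oplus E_u$, which gives
\[
\dot z_t = \alpha_t L_{\bar x}(z_t) + \Pi_{E_s \oplus E_u}\bigl[B(z_t,z_t) + G^\epsilon_t\bigr], \qquad \dot \alpha_t = \bar x \cdot \bigl[B(z_t, z_t) + G^\epsilon_t\bigr].
\]
Since the decomposition $\R^d = E_s \oplus E_u \oplus E_c$ is $L_{\bar x}$-invariant and $L_{\alpha \bar x} = \alpha L_{\bar x}$, the non-autonomous propagator $U(t,r) = \exp\bigl(\bigl(\int_r^t \alpha_\tau\, d\tau\bigr) L_{\bar x}\bigr)$ splits as $U^s \oplus U^u$ on $E_s \oplus E_u$, with uniform bounds $\|U^s(t,r)\| \leq C e^{-\lambda(t-r)}$ for $t \geq r$ and $\|U^u(t,r)\| \leq C e^{\lambda(t-r)}$ for $t \leq r$ and some $\lambda > 0$, provided $\alpha_t$ remains in a fixed compact subinterval of $(0, \infty)$; this a posteriori constraint on $\alpha_t$ will close by energy conservation below.

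I then run a contraction mapping argument on the closed ball
\[
\mathcal{X}_\delta = \Bigl\{ z \in C([0,T_\epsilon]; E_s \oplus E_u) : \sup_t |z^s_t| \leq C_1 \delta,\ \sup_t |z^u_t| \leq C_1 \delta^2 \Bigr\},
\]
via the Duhamel formulas
\begin{align*}
z^s_t &= U^s(t,0) x_s + \int_0^t U^s(t,r) \Pi_s \bigl[B(z_r,z_r) + G^\epsilon_r\bigr]\, dr, \\
z^u_t &= -\int_t^{T_\epsilon} U^u(t,r) \Pi_u \bigl[B(z_r,z_r) + G^\epsilon_r\bigr]\, dr,
\end{align*}
with terminal condition $z^u_{T_\epsilon} = 0$ (propagating stable data forward from $0$ and unstable data backward from $T_\epsilon$). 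The exponential weights make the quadratic term contribute $O(\delta^2)$ to both projections while $G^\epsilon$ contributes $O(\epsilon)$; for $\delta$ small (depending only on $B$) and $\epsilon$ small (depending on $\delta$ and $C_0$), this yields a contraction on $\mathcal{X}_\delta$. The unique fixed point defines $\psi^\epsilon_\omega(x_s + x_c) := z^u_0 \in B_\delta^u$, continuous in $(x_s, x_c)$ by the standard parameter dependence of contraction fixed points. Estimate \eqref{eq:manifold2} is then immediate, the stable bound in \eqref{eq:manifold1} follows from $\sup_t |z^s_t| \leq C\delta$, and the center bound follows from the identity $|\alpha_t|^2 = |y_t|^2 - |z_t|^2$ combined with Lemma~\ref{lem:energyconservation}.

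The main obstacle is the center direction $E_c = \mathrm{span}(\bar x)$, along which $L_{\bar x}$ has eigenvalue $0$; a naive Gr\"onwall estimate on $\dot\alpha_t$ would only yield $|\alpha_t - \alpha_0| \lesssim \delta^2 T_\epsilon \approx \delta^2 |\log \epsilon|$, which blows up as $\epsilon \to 0$. The resolution is to not treat $\alpha_t$ as an additional fixed-point variable but to instead read its size off the energy: the conservation law \eqref{eq:energyPres} together with Lemma~\ref{lem:energyconservation} pins $|y_t|$ within $O(\sqrt\epsilon)$ of $|y_0|$ on $\Omega_\epsilon$, and combined with the a priori bound $|z_t| = O(\delta)$ from $\mathcal{X}_\delta$, the identity $|\alpha_t|^2 = |y_t|^2 - |z_t|^2$ forces $|\alpha_t - \alpha_0| \leq C\delta$ throughout $[0, T_\epsilon]$. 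Thus the center variable is slaved to the conserved quadratic energy, which also explains why $\psi^\epsilon_\omega$ is naturally parameterized by both $x_s$ and $x_c$ rather than $x_s$ alone.
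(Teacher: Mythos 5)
Your proposal shares the paper's overall strategy (a Lyapunov--Perron fixed point construction on the high-probability event $\Omega_\epsilon$ where the Brownian paths are small, with energy conservation controlling the neutral direction), but there is a genuine gap in the way you set up the fixed point. You linearize using the \emph{trajectory-dependent} propagator $U(t,r) = \exp\bigl(\bigl(\int_r^t \alpha_\tau\,d\tau\bigr)L_{\bar x}\bigr)$, and you propose to ``slave'' $\alpha_t$ to $z_t$ via the identity $\alpha_t^2 = |y_t|^2 - |z_t|^2$. But $|y_t|$ is the energy of the \emph{actual solution}, which is only (approximately) conserved because $y$ solves the random ODE; an arbitrary candidate $z \in \mathcal{X}_\delta$ fed into the fixed-point map is not a solution, so there is no way to assign to it an $\alpha$ satisfying the conservation law. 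You therefore cannot define $U$ on the domain of the map, and even if you patch this, the contraction estimate must compare $U_z$ and $U_{\tilde z}$ for two different candidates --- a coupling you never address. This circularity is precisely what the paper's construction avoids.

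The paper's proof instead linearizes at the \emph{fixed} center point $x_c$, so the propagator $e^{L_{x_c}t}$ is data-independent and the fixed-point map $\Phi$ is an honest contraction (in an exponentially weighted norm $\|x\|_X = \sup_t e^{-\beta t}|x(t)|$, which is needed because the unweighted $\Pi_c$-integral $\int_0^t \Pi_c[\dots]\,d\tau$ would grow linearly with $T_\epsilon$). The center component $\Pi_c x_*(t)$ is kept \emph{inside} the fixed-point variable rather than eliminated, the nonlinearity is globally Lipschitzed by a cutoff $\chi$ at scale $\delta$, and only \emph{after} the fixed point is produced does the paper invoke Lemma~\ref{lem:energyconservation} together with the one-dimensionality of $E_c$ and continuity to show $|\Pi_c x_*(t)| \lesssim \delta$ on $[0,T_\epsilon]$, which then confirms the cutoff was never active. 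If you want to salvage your non-autonomous formulation, the cleanest fix is to retain $\alpha_t$ as a genuine fixed-point variable and adopt a weighted norm, which is essentially what the paper does once you observe $L_{x_c} = |x_c|\, L_{\bar x}$; otherwise you should follow the paper and freeze the linearization at $x_c$.
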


\begin{proof}
Throughout, we assume that $\epsilon > 0$ is at least small enough so that for
	$$\Omega_\epsilon = \left\{\omega \in \Omega: \sup_{0 \le t \le T_\epsilon} \epsilon^{3/2}\sum_{j=1}^d |\sigma_j e_j W_t^{(j)}|\le \epsilon\right\}$$
	we have $\P(\Omega_\epsilon) \ge 1- \epsilon$.
	This can be done by the reflection principle and the scaling of $T_\epsilon$ in $\epsilon$. We will construct the function $\psi^\epsilon_\omega$ advertised in the lemma for $\omega \in \Omega_\epsilon$, $\delta$ sufficiently small, and $\epsilon$ perhaps even smaller depending on $\delta$.
	
	Fix $x_s \in B_\delta^s$, $x_c \in B_{1/2}^{ce}(\bar{x})$, and $\omega \in \Omega_\epsilon$. We will define $\psi^\epsilon_\omega(x_s+x_c)$ using a variation of the classical Perron contraction mapping argument. By dynamical assumption (2), there exist $C_1 \ge 1$ and $\lambda > 0$ that do not depend on $x_c$ such that for all $t \ge 0$ and $x \in \R^d$ we have 
	\begin{equation} \label{eq:expdecay}
		|e^{L_{x_c}t}\Pi_s x| \le C_1 e^{-\lambda t} |\Pi_s x| \quad \text{and} \quad 	|e^{-L_{x_c}t}\Pi_u x| \le C_1 e^{-\lambda t}|\Pi_u x|.
	\end{equation}
	 Let $\chi:\R^d \to \R^d$ be a smooth function satisfying $\chi(x) = x$ when $|x| \le 10C_1\delta$ and $\chi(x) = 0$ when $|x| \ge 20 C_1 \delta$.  To simplify notation, we define
	$$F_\epsilon(t) = \epsilon^{3/2} \sum_{j=1}^d \sigma_j e_j W_t^{(j)} \quad \text{and} \quad G_\epsilon(t) = L_{x_c}F_\epsilon(t) + B(F_\epsilon(t), F_\epsilon(t)) - \epsilon A F_\epsilon(t) - \epsilon A x_c,$$
	and then for a continuous function $x:[0,T_\epsilon] \to \R^d$ let 
	$$N_\epsilon(x)(t) = B(\chi(x(t)),\chi(x(t))) + 2B(\chi(x(t)), F_\epsilon(t)) - \epsilon A \chi(x(t)). $$
	Define the mapping $\Phi: C([0,T_\epsilon];\R^d) \to C([0,T_\epsilon];\R^d)$ by 
	\begin{align*} 
		\Phi(x)(t) &= e^{L_{x_c}t}x_s - \int_t^{T_\epsilon}e^{L_{x_c} (t-\tau)}\Pi_u [N_\epsilon(x)(\tau) + G_\epsilon(\tau)]\dee \tau \\ 
		& \quad + \int_0^t e^{L_{x_c} (t-\tau)}\Pi_s [N_\epsilon(x)(\tau) + G_\epsilon(\tau)]\dee \tau + \int_0^t \Pi_c [N_\epsilon(x)(\tau) + G_\epsilon(\tau)]\dee \tau.
	\end{align*}
The function $\Phi$ is chosen so that if the cutoff $\chi$ were removed from the definitions, then a fixed point $x_*:[0,T_\epsilon] \to \R^d$ of $\Phi$ would be such that 
\begin{equation}
	\varphi_\omega^\epsilon(t,x_c + x_*(0)) = x_*(t) + F_\epsilon(t) + x_c.
\end{equation}
Fix $0 < \beta < \lambda$ and let $X$ denote the Banach space of continuous functions $x:[0,T_\epsilon]\to \R^d$ endowed with the norm 
$$\|x\|_{X} = \sup_{0 \le t \le T_\epsilon} |e^{-\beta t}x(t)|.$$
Assuming that $\epsilon < \delta$, it is easy to see that there exists a constant $C_2 > 0$ such that for any $x, \tilde{x} \in X$ we have 
$$|N_\epsilon(x)(t) - N_\epsilon(\tilde{x})(t)| \le C_2 \delta e^{\beta t}\|x - \tilde{x}\|_{X}.$$
Therefore, using \eqref{eq:expdecay} to bound the term that involves $\Pi_u$, we obtain
\begin{align*}
	|\Phi(x)(t) - \Phi(\tilde{x})(t)| &\le C_1C_2 \delta \|x - \tilde{x}\|_{X}\left( \int_t^{T_\epsilon} e^{-\lambda(\tau-t)}e^{\beta \tau} \dee \tau + 2  \int_0^t e^{\beta \tau} \dee \tau\right) \\ 
	& \le C_1 C_2 \delta \|x - \tilde{x}\|_{X} \left(\frac{e^{\beta t}}{\lambda - \beta} + \frac{2 e^{\beta t}}{\beta}\right).
\end{align*}
It follows that $\Phi:X \to X$ is a contraction provided that $\delta$ is chosen such that 
$$ \delta < \left(C_1 C_2 \left(\frac{1}{\lambda - \beta} + \frac{2}{\beta}\right)\right)^{-1}.$$
By the Banach fixed point theorem, there exists a unique $x_* \in X$ satisfying $\Phi(x_*)(t) = x_*(t)$ for all $t \in [0,T_\epsilon]$. We set 
\begin{equation} \label{eq:graphdef}
	\psi^\epsilon_\omega(x_c+x_s) = -\int_0^{T_\epsilon}e^{L_{x_c}(t-\tau)}\Pi_u[N_\epsilon(x_*)(\tau)+G_\epsilon(\tau)]\dee \tau.
\end{equation}
An estimate using \eqref{eq:expdecay} similar to the ones above shows that $|\psi^\epsilon_\omega(x_c + x_s)| \leqc \delta^2$ if $\epsilon < \delta^2$, so we indeed have $\psi^\epsilon_\omega: B_\delta^s + B_{1/2}^{ce}(\bar{x}) \to B_\delta^u$ if $\delta$ is chosen sufficiently small. 

We now verify that $\psi^\epsilon_\omega$ as defined above satisfies \eqref{eq:manifold1} and \eqref{eq:manifold2}. Let $0 \le T \le T_\epsilon$ be the maximal time such that 
\begin{equation}
	\chi(x_*(t)) = x_*(t) \quad \forall 0 \le t \le T.
\end{equation}
From the pathwise uniqueness of solutions to \eqref{eq:scaledSDE} and the fact that 
\begin{equation} \label{eq:fixedptODE}
	\frac{d}{dt}x_*(t) = L_{x_c} x_*(t) + B(\chi(x_*(t)), \chi(x_*(t))) + L_{x_c} F_\epsilon(t) + 2B(\chi(x_*(t)),F_\epsilon(t)) - \epsilon A \chi(x_*(t))- \epsilon Ax_c,
\end{equation}
we have 
\begin{equation} \label{eq:varphibootstrap}
	\varphi_\omega^\epsilon(t,x_s+x_c+\psi^\epsilon_\omega(x_s+x_c)) = x_*(t) + x_c + F_\epsilon(t), \quad t \in [0,T].
\end{equation}
We would like to show that $T = T_\epsilon$. From $\Phi(x_*) = x_*$ and estimates similar to those in the previous paragraph, it is immediate that there exists $C_3 > 0$ such that for $\delta$ sufficiently small (independent of $\epsilon$) we have
\begin{equation} \label{eq:fixedptbounds}
	\sup_{0\le t \le T_\epsilon}|\Pi_s x_*(t)| \le 2C_1 \delta  \quad \text{and} \quad \sup_{0 \le t \le T_\epsilon}|\Pi_u x_*(t)| \le C_3 \delta^2.
\end{equation} 
In particular, if $\delta > 0$ is sufficiently small, then $T > 0$ and in order to show $T = T_\epsilon$ it is enough to prove that 
\begin{equation} \label{eq:y*goal}
	\sup_{0\le t \le T}|\Pi_c x_*(t)| \le 7 C_1 \delta.
\end{equation}
 Now, by \eqref{eq:varphibootstrap}, Lemma~\ref{lem:energyconservation}, and the fact that $|F_\epsilon(t)| \le \epsilon$ for all $t \in [0,T_\epsilon]$, there exists some $C_4 > 0$ that does not depend on $\epsilon$ or $\delta$ such that for all $t \in [0,T]$ we have
\begin{equation} \label{eq:y*bound1}
	e^{-\epsilon C_4 T}(|x_c| - |x_s| -  |\psi^\epsilon_\omega(x_s+x_c)|) - C_4 \sqrt{\epsilon} \le |x_*(t) + x_c| \le e^{\epsilon C_4 T}(|x_c| + |x_s| + |\psi^\epsilon_\omega(x_s+x_c)| + C_4 \sqrt{\epsilon}).
\end{equation}
We may assume that $\epsilon > 0$ is small enough so that $e^{\epsilon C_4 T_\epsilon} \le 1+\sqrt{\epsilon}$ and $e^{-\epsilon C_4 T_\epsilon} \ge 1-\sqrt{\epsilon}$. Putting this bound into \eqref{eq:y*bound1} and recalling also that $|x_s| + |\psi_\omega(x_s+x_c)| \le 2 \delta$, we see that there exists $C_5 > 0$ such that 
\begin{equation} \label{eq:y*bound2}
	|x_c| - (2\delta + C_5\sqrt{\epsilon}) \le |x_*(t) + x_c| \le |x_c| + (2\delta + C_5\sqrt{\epsilon}), \quad t \in [0,T].
\end{equation}
It follows from \eqref{eq:fixedptbounds} and \eqref{eq:y*bound2} that for $\delta$ sufficiently small we have 
\begin{equation} \label{eq:y*bound3}
\left||\Pi_c x_*(t)+x_c| - |x_c|\right| \le 2\delta + 2C_1\delta + C_3 \delta^2 + C_5\sqrt{\epsilon}\le 5C_1 \delta + C_5\sqrt{\epsilon}, \quad t \in [0,T].
\end{equation}
Since the center subspace is one-dimensional, $\Pi_c x_*(0) = 0$, and $x_*$ is continuous, \eqref{eq:y*bound3} actually implies that $|\Pi_c x_*(t)| \le 5C_1 \delta + C_5 \sqrt{\epsilon}$ for all $t \in [0,T]$. This gives \eqref{eq:y*goal} and hence also $T= T_\epsilon$ provided that $\epsilon \ll \delta^2$. The bounds \eqref{eq:manifold1} and \eqref{eq:manifold2} then follow immediately from \eqref{eq:varphibootstrap} and \eqref{eq:fixedptbounds}.
\end{proof}

\subsubsection{Concluding the proof of the exit time estimates}

The last ingredient we need before completing the proof of Lemma~\ref{lem:scaledexit} is a lemma that describes the growth of perturbations in $E_u$ of random trajectories that remain near $\Ec$.

\begin{lemma} \label{lem:unstablepert}
	Fix $\bar{x} \in \Ec \cap \S^{d-1}$ and let $E_s$, $E_u$, and $E_c$ be the associated linear subspaces defined in \eqref{eq:linearsubs}. Let $x_c \in B_{1/2}^{ce}(\bar{x})$ and suppose that $x_0 \in \R^d$, $\omega \in \Omega$, $\delta \in (0,1)$ and $C_0,T \ge 1$ are such that 
	\begin{equation}
		\sup_{0 \le t \le T}|\varphi_\omega^\epsilon(t, x_0) - x_c| \le C_0\delta. \\
	\end{equation}
For $v \in E_u$, define 
	$$t_*(v) = \sup\{0 \le t \le T: |\Pi_u (\varphi_\omega^\epsilon(s,x_0+v) - \varphi_\omega^\epsilon(s,x_0))| \le 2\delta \quad \forall 0 \le s \le t\}.$$
	There exist $c > 0$, $\lambda > 0$, and $\delta_0,\epsilon_0 \in (0,1)$ so that if $\delta \in  (0,\delta_0)$, $\epsilon \in (0,\epsilon_0)$, and $v \in B_{2\delta}^u$, then we have 
	$$ |\Pi_u(\varphi_\omega^\epsilon(t,x_0+v) - \varphi_\omega^\epsilon(t,x_0))| \ge c|v|e^{\lambda t} \quad \forall 0\le t \le t_*(v). $$
	The constants $c$, $\lambda$, $\delta_0$, and $\epsilon_0$ 
	depend only on $C_0$ and properties of the linearized operator $L_{\bar{x}}$.
\end{lemma}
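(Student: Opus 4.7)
\medskip

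\noindent\textbf{Plan of proof.} The plan is to exploit that the noise in \eqref{eq:scaledSDE} is additive, so that the difference $d_t := \varphi_\omega^\epsilon(t,x_0+v) - \varphi_\omega^\epsilon(t,x_0)$ of two trajectories on the same Wiener path $\omega$ satisfies a \emph{deterministic} ODE driven by the reference trajectory $x_t := \varphi_\omega^\epsilon(t,x_0)$. A direct computation using bilinearity of $B$ gives
\begin{equation}
\tfrac{d}{dt} d_t \;=\; 2B(x_t,d_t) + B(d_t,d_t) - \epsilon A d_t \;=\; L_{x_c} d_t + R_t,
\end{equation}
where $R_t = 2B(x_t - x_c, d_t) + B(d_t,d_t) - \epsilon A d_t$. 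By hypothesis $|x_t - x_c| \le C_0\delta$, and we will argue by a cone/bootstrap that $|d_t| \lesssim \delta$ for $t \le t_*(v)$, so that $|R_t| \le K(\delta + \epsilon)|d_t|$ for a constant $K$ depending only on $C_0$ and $B$.

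\noindent The next step is to set up an adapted norm. Since $x_c = \alpha \bar{x}$ for some $\alpha \in (1/2, 3/2)$ (as $E_c = \mathrm{span}(\bar x)$), we have $L_{x_c} = \alpha L_{\bar{x}}$, and $L_{x_c}$ preserves each of the subspaces $E_s, E_u, E_c$ of \eqref{eq:linearsubs} and annihilates $E_c$. Using dynamical assumption (1) on $\bar{x}$, I would choose inner products $\langle\cdot,\cdot\rangle_u$ on $E_u$ and $\langle\cdot,\cdot\rangle_s$ on $E_s$, equivalent to the ambient Euclidean structure with constants depending only on $L_{\bar{x}}$, and a rate $\lambda_* > 0$ such that
\begin{equation}
\langle L_{x_c} u, u\rangle_u \ge \lambda_* |u|_u^2 \quad (u \in E_u), \qquad \langle L_{x_c} s, s\rangle_s \le -\lambda_* |s|_s^2 \quad (s \in E_s),
\end{equation}
uniformly in $\alpha \in (1/2, 3/2)$. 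Decompose $d_t = u_t + s_t + c_t$ according to $E_u \oplus E_s \oplus E_c$.

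\noindent The core of the proof is then a standard unstable-cone argument. Differentiating gives
\begin{align}
\tfrac{d}{dt}|u_t|_u &\ge \lambda_* |u_t|_u - K'(\delta + \epsilon)|d_t|, \\
\tfrac{d}{dt}|s_t|_s &\le -\lambda_* |s_t|_s + K'(\delta + \epsilon)|d_t|, \\
\tfrac{d}{dt}|c_t| &\le K'(\delta + \epsilon)|d_t|,
\end{align}
for a constant $K'$. Fix $\gamma > 1$ large and consider the quadratic form $V(t) := |u_t|_u^2 - \gamma(|s_t|_s^2 + |c_t|^2)$. At $t=0$, $d_0 = v \in E_u$ so $V(0) = |v|_u^2 > 0$. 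On the set $\{V \ge 0\}$ the decomposition satisfies $|s_t|_s^2 + |c_t|^2 \le |u_t|_u^2/\gamma$, so in particular $|d_t|^2 \lesssim |u_t|_u^2$ with a constant depending only on norm equivalence; then Young's inequality combined with the three displays above yields
\begin{equation}
\tfrac{d}{dt} V(t) \ge (2\lambda_* - C(\delta+\epsilon))|u_t|_u^2 + \gamma(2\lambda_* - C(\delta+\epsilon))|s_t|_s^2 - C(\delta+\epsilon)\gamma |c_t|^2,
\end{equation}
and for $\delta_0, \epsilon_0$ small enough depending on $\gamma, \lambda_*$ the right-hand side is $\ge \lambda_* V(t)$. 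Hence $V(t) > 0$ for all $t \in [0,t_*(v)]$, and simultaneously $\frac{d}{dt}|u_t|_u \ge (\lambda_*/2)|u_t|_u$ on this interval, which yields $|\Pi_u d_t| \gtrsim |v| e^{\lambda t}$ with $\lambda := \lambda_*/2$ and a constant arising from the norm equivalence.

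\noindent The main technical obstacle is managing the center direction: $L_{x_c}$ has a zero eigenvalue on $E_c$, so $|c_t|$ is not actively damped and one must prevent it from escaping its $\gamma^{-1/2}|u|_u$ bound. This is the reason for building the one-sided coercivity estimate into $V$ rather than working with the stable and center directions separately; the exponential amplification of $|u_t|_u$ eventually dominates any linear-in-time drift contributed by the center term. A secondary issue is ensuring all constants are uniform in $x_c \in B_{1/2}^{ce}(\bar x)$, which is immediate from $L_{x_c} = \alpha L_{\bar x}$ with $\alpha$ in a compact subset of $(0,\infty)$. The initial step of justifying $|d_t| \lesssim \delta$ on $[0,t_*(v)]$ is handled by the same cone: while $V(t) > 0$ we have $|d_t| \lesssim |u_t|_u \lesssim \delta$ by definition of $t_*(v)$, closing the bootstrap.
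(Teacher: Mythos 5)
Your proof is correct and follows essentially the same strategy as the paper's: writing the difference $d_t = \varphi_\omega^\epsilon(t,x_0+v) - \varphi_\omega^\epsilon(t,x_0)$ as a solution of a deterministic ODE (since the noise is additive and cancels), decomposing according to $E_u \oplus E_s \oplus E_c$, and running an unstable-cone argument via a Lyapunov functional of the form $|u|^2 - (\text{const})(|s|^2 + |c|^2)$. The only cosmetic differences are that the paper passes to an explicit similarity $R_{x_c} = P L_{x_c} P^{-1}$ while you use adapted inner products (these are the same device), and the paper effectively fixes the cone weight to $1$ while you carry an auxiliary $\gamma$ (any fixed $\gamma > 0$ works here; it need not be large). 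Both versions correctly handle the center direction by building it into the one-sided coercivity of the functional, and both close the $|d_t|\lesssim\delta$ bootstrap through the cone invariance plus the defining constraint $|\Pi_u d_t|\le 2\delta$ on $[0,t_*(v)]$.
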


\begin{proof}
	In this proof, $c$ and $C$ denote generic positive constants that for $\epsilon, \delta > 0$ sufficiently small depend only on $C_0$ and $L_{\bar{x}}$. 
	There exists a real change of variables matrix $P$ that leaves $E_u$ and $E_s$ invariant (and acts as the identity on $E_c$) and is such that, defining $R_{x_c} = P L_{x_c} P^{-1}$, for every $x \in \R^d$ we have 
	\begin{align}
		 R_{x_c} \Pi_u x \cdot \Pi_u x &\ge c |\Pi_u x|^2,  \label{eq:R1}\\
		  R_{x_c} \Pi_s x \cdot \Pi_s x &\le -c |\Pi_s x|^2, \label{eq:R2}\\ 
		 R_{x_c} \Pi_c x &= 0\label{eq:R3}, \quad \text{and} \\ 
		 |P x| &\le C|x|. \label{eq:R4}
	\end{align}
		Let $$h(t) = P\left(\varphi_\omega^\epsilon(t,x_0 + v) - \varphi_\omega^\epsilon(t,x_0)\right).$$
		Then, define
		$$\Phi(t) = |\Pi_u h(t)|^2 - |\Pi_s h(t)|^2 - |\Pi_c h(t)|^2$$	
		and let $t_*' \ge 0$ denote the maximal time such that $\Phi(t)  > 0$ for all $0 \le t \le t_*'$. Note that $t_*' > 0$ by the continuity of $\Phi$ and fact that $\Phi(0) = |P v|^2 > 0$. Moreover, from the definitions of $\Phi$ and $t_*$ it follows easily that 
		\begin{equation}\label{eq:hupperbound}
			|h(t)| \le C\delta 
		\end{equation}
		for all $t \in [0,t_* \wedge t_*'].$ For simplicity of notation, define $g(x) = B(x,x)$ and $\bar{A} = PAP^{-1}$. A direct computation then shows that $h$ solves 
	\begin{equation}
		\begin{cases}
			\frac{dh}{dt} = R_{x_c} h(t) - \epsilon \bar{A} h(t) + P\left(g(P^{-1}h(t)+\varphi_\omega^\epsilon(t,x_0) - x_c) - g(\varphi_\omega^\epsilon(t,x_0) - x_c)\right), \\ 
			h(0) = Pv.
		\end{cases}
	\end{equation}
	Note that since $g$ is smooth and vanishes at zero, from $|\varphi_\omega^\epsilon(t,x_0) - x_c| \le C_0 \delta$ and \eqref{eq:hupperbound} we have 
	\begin{equation} \label{eq:nonlinearpert}
		\left|P\left(g(P^{-1}h(t)+\varphi_\omega^\epsilon(t,x_0) - x_c) - g(\varphi_\omega^\epsilon(t,x_0) - x_c)\right)\right|\le C \delta |h(t)| \quad \forall 0\le t\le t_*\wedge t_*'.
	\end{equation}
	Using \eqref{eq:R1}, \eqref{eq:R2}, \eqref{eq:R3}, \eqref{eq:R4}, and \eqref{eq:nonlinearpert} we obtain, for $0 \le t \le t_* \wedge t_*'$, the differential inequalities
	\begin{align}
		\frac{d}{dt}|\Pi_u h(t)|^2 &\ge c|\Pi_u h(t)|^2 - (C\epsilon + C\delta) |\Pi_u h(t)| |h(t)| \\ 
		& \ge (c - C\delta - C\epsilon)|\Pi_u h(t)|^2 - (C\epsilon + C\delta)(|\Pi_s h(t)|^2 + |\Pi_c h(t)|^2),
	\end{align}
		\begin{align}
		\frac{d}{dt}|\Pi_s h(t)|^2 &\le -c|\Pi_s h(t)|^2 + (C\epsilon + C\delta) |\Pi_s h(t)| |h(t)| \\ 
		& \le -(c - C\delta - C\epsilon)|\Pi_s h(t)|^2 + (C\epsilon + C\delta)(|\Pi_u h(t)|^2 + |\Pi_c h(t)|^2),
	\end{align}
	and 
	\begin{align}
		\frac{d}{dt}|\Pi_c h(t)|^2 \le (C\epsilon + C\delta) |h(t)|^2. \\ 
	\end{align}
Combining these inequalities and taking $\epsilon$ and $\delta$ sufficiently small gives 
$$\frac{d}{dt}\Phi(t) \ge c(|\Pi_u h(t)|^2 + |\Pi_s h(t)|^2) - (C\delta + C\epsilon)|\Pi_c h(t)| \ge c\Phi(t),$$
where in the second inequality we assumed that $C\delta + C\epsilon \le c$. The previous bound holds for all $t \le t_* \wedge t_*'$. Therefore, $t_*' \ge t_*$ by continuity of $\Phi$ and we obtain
$$|\Pi_u h(t)|^2 \ge \Phi(t) \ge e^{c t}\Phi(0) = e^{ct}|P v|^2$$
for all $t \le t_*$, which completes the proof.
\end{proof}

We are now ready to complete the proof of Lemma~\ref{lem:scaledexit} by combining the hypoelliptic smoothing described by Lemma~\ref{lem:densitysmoothing} with the other results of this section.

\begin{proof}[Proof of Lemma~\ref{lem:scaledexit}]
		
	Fix $\bar{x} \in \Ec \cap \S^{d-1}$ and let $E_s$, $E_u$, and $E_c$ be the associated linear subspaces defined earlier. For $0 < \delta \ll 1$, let $$V_\delta = \{x \in B_\delta^s + B_\delta^u + E_c: 3/4 < |x| < 5/4 \text{ and } \Pi_c x \cdot \bar{x} \ge 0\}.$$ 
	Note that $V_\delta$ is simply the connected component of the set $(B_\delta^s + B_\delta^u + E_c) \cap \{3/4 < |x| < 5/4\}$ that contains $\bar{x}$. We define the stopping time 
	$$\tilde{\tau}_\delta^\epsilon(x_0,\omega) = \inf\{t \ge 0: x_t^\epsilon(\omega) \not \in V_\delta\},$$
	where $x_t^\epsilon$ solves \eqref{eq:scaledSDE} with initial condition $x_0 \in \R^d$. We will prove that there exists $C_1 \ge 1$ so that for all $\delta > 0$ sufficiently small and $x_0 \in \S^{d-1} \cap V_\delta $ we have
	\begin{equation} \label{eq:modifiedstopping}
	\P\left(\tilde{\tau}_\delta^\epsilon(x_0,\omega) \le C_1|\log \epsilon|+1\right) \ge \frac{1}{2}
	\end{equation}
	provided that $\epsilon$ is taken small enough depending on $\delta$. Once \eqref{eq:modifiedstopping} is established, Lemma~\ref{lem:scaledexit} will follow easily. To prove \eqref{eq:modifiedstopping}, let us first observe that 
	$$\{\omega \in \Omega: x_t^\epsilon(\omega) \not \in 
	V_\delta \text{ for some }1 \le t \le 1+ C_1|\log \epsilon|\} \subseteq  \{\omega \in \Omega: \tilde{\tau}_\delta^\epsilon(x_0,\omega) \le C_1|\log \epsilon|+1\}.$$
	Therefore, it follows by the Markov property and Fubini's theorem that 
	\begin{align}
		\P(\tilde{\tau}^\epsilon_\delta(x_0,\omega) \le C_1|\log \epsilon|+1) &\ge \int_{\R^d} p_\epsilon(1,x_0,x) \P(\tilde{\tau}^\epsilon_\delta(x,\omega)\le C_1|\log \epsilon|)\dx \\ 
		& = \int_{\Omega} \left(\int_{\R^d}\mathbf{1}_{\{\tilde{\tau}_\delta^\epsilon(x,\omega)\le C_1|\log\epsilon|\}} p_\epsilon(1,x_0,x)\dee x\right) \P(\dee \omega). \label{eq:MarkovFubini}
	\end{align}
We assume throughout that $\delta$ is small enough so that Lemma~\ref{lem:centerstable} applies. Let $\Omega_\epsilon$ be the set obtained from applying Lemma~\ref{lem:centerstable} with $T_\epsilon = C_1 |\log \epsilon|$ and let $C \ge 1$ be the constant so that \eqref{eq:manifold1} and \eqref{eq:manifold2} both hold. 
	From \eqref{eq:MarkovFubini}, it holds trivially that
	 $$
	 \P(\tilde{\tau}_\delta^\epsilon(x_0,\omega) \le C_1|\log \epsilon|+1) \ge \int_{\Omega_\epsilon} \left(\int_{\R^d}\mathbf{1}_{\{\tilde{\tau}_\delta^\epsilon(x,\omega)\le C_1|\log \epsilon|\}} p_\epsilon(1,x_0,x)\dee x\right) \P(\dee \omega).
	 $$
	 Since $\P(\Omega_\epsilon) \ge 1-\epsilon$ for $\epsilon$ small, to complete the proof of \eqref{eq:modifiedstopping} it is enough to show that 
	 \begin{equation}\label{eq:Aepsgoal}
	 	\int_{\R^d}\mathbf{1}_{\{\tilde{\tau}_\delta^\epsilon(x,\omega)\le C_1|\log \epsilon|\}} p_\epsilon(1,x_0,x)\dee x \ge \frac{3}{4} \quad \forall \omega \in \Omega_\epsilon
	 \end{equation}
	 provided that $\epsilon$ and $\delta$ are sufficiently small, with $\delta$ chosen independently of $\epsilon$. Fix $\omega \in \Omega_\epsilon$. Let $\psi^\epsilon_\omega: B_\delta^s + B_{1/2}^{ce}(\bar{x}) \to B_\delta^u$ be the function guaranteed by Lemma~\ref{lem:centerstable} and for $r > 0$ define 
	 $$\mathcal{U}_r = \{x+\psi^\epsilon_\omega(x)+v:  x \in B_\delta^s + B_{1/2}^{ce}(\bar{x}) \text{ and } v \in B_r^u\}.$$
	 By Fubini's theorem, we have 
	 \begin{equation}\label{eq:volbound}
	 	|\mathcal{U}_r| \le \alpha r^{\beta}
	 \end{equation}
	 for some positive constants $\alpha$ and $\beta$ that depend on $d$ and the dimension of $E_u$. Splitting the integral in \eqref{eq:Aepsgoal} between the sets $\R^d \setminus V_\delta$, $V_\delta \cap \mathcal{U}_r$, and $V_\delta \setminus \mathcal{U}_r$ and using that the characteristic function is trivially equal to one for $x \in \R^d \setminus V_\delta$ gives
	 \begin{equation}\label{eq:densitysplit}
	 	\begin{aligned} \int_{\R^d}\mathbf{1}_{\{\tilde{\tau}_\delta^\epsilon(x,\omega)\le C_1|\log\epsilon|\}} p_\epsilon(1,x_0,x)\dee x &\ge 1 - \int_{V_\delta \cap \mathcal{U}_r} p_\epsilon(1,x_0,x)\dx \\
	 		& \quad - \int_{V_\delta \setminus \mathcal{U}_r} (1-\mathbf{1}_{\{\tilde{\tau}_\delta^\epsilon(x,\omega)\le C_1|\log\epsilon|\}}) p_\epsilon(1,x_0,x)\dx.
	 		\end{aligned} 
	 	\end{equation}
For the first integral on the right-hand side of \eqref{eq:densitysplit}, taking $r = \epsilon^m$ for $m \ge 1$ sufficiently large it follows by H\"{o}lder's inequality and Lemma~\ref{lem:densitysmoothing} that 
\begin{equation}  \label{eq:Urbound}
	\left|\int_{V_\delta \cap \mathcal{U}_r} p_\epsilon(1,x_0,x)\dx \right| \le \epsilon
\end{equation}
for all $\epsilon$ sufficiently small. For the second integral on the right-hand side of \eqref{eq:densitysplit}, we claim that it is zero provided that $C_1$ is large enough. In what follows, we denote $\Pi_{sc} = \Pi_s + \Pi_c$ for convenience. Since $\Pi_{sc} V_\delta \subseteq B_\delta^s + B_{1/2}^c(\bar{x})$ for $\delta$ small, $\psi^\epsilon_\omega(\Pi_{sc}x)$ is well defined for each $x \in V_\delta$. For $x \in V_\delta$, let $\Psi_\omega^\epsilon(x) = \Pi_{sc}x + \psi_\omega^\epsilon(\Pi_{sc}x)$ and $v(x) = \Pi_u x - \psi_\omega^\epsilon(\Pi_{sc}x) \in E_u$. Then, for $x \in V_\delta\setminus \mathcal{U}_r$, we may write $\varphi_\omega^\epsilon(t,x)$ as
\begin{align*}\varphi_\omega^\epsilon(t,x) &= [\varphi_\omega^\epsilon(t,\Psi_\omega^\epsilon(x) + v(x)) - \varphi_\omega^\epsilon(t,\Psi_\omega^\epsilon(x))] +  \varphi_\omega^\epsilon(t,\Psi_\omega^\epsilon(x)).
\end{align*}
Let $$t_* = \inf\{0\le t \le C_1 |\log \epsilon|: \left|\Pi_u(\varphi_\omega^\epsilon(t,\Psi_\omega^\epsilon(x) + v(x)) - \varphi_\omega^\epsilon(t,\Psi_\omega^\epsilon(x)))\right| \ge 2\delta\}.$$ 
Since 
$$\sup_{0 \le t \le C_1 |\log \epsilon|}|\varphi_\omega^\epsilon(t,\Psi_\omega^\epsilon(x)) - \Pi_c x| \le 2C\delta$$
by Lemma~\ref{lem:centerstable}, it follows from Lemma~\ref{lem:unstablepert} applied with $\Pi_c x \in B_{1/2}^{ce}(\bar{x})$ that there exist $c, \lambda > 0$ that do not depend on $x$ or $C_1$ such that for $t \le t_*$ and $\delta > 0$ sufficiently small depending on $C$ we have 
$$\left|\Pi_u(\varphi_\omega^\epsilon(t,\Psi_\omega^\epsilon(x) + v(x)) - \varphi_\omega^\epsilon(t,\Psi_\omega^\epsilon(x)))\right| \ge c|v(x)| e^{\lambda t}. $$
Recalling \eqref{eq:manifold2} and that $|v(x)| \ge \epsilon^m$ by the definition of $\mathcal{U}_r$, we obtain
$$|\Pi_u \varphi_\omega^\epsilon(t,x)| \ge c\epsilon^m e^{\lambda t} - C\delta^2$$
for all $t \le t_*$. It follows that if 
\begin{equation} \label{eq:C1}
	C_1 = \frac{1}{\lambda}\left(\max(0,\log(3/c)) + m\right) \quad \text{and} \quad \delta \le \frac{1}{2C},
\end{equation}
then $t_* \le C_1|\log \epsilon|$.  That is, there exists $t_0 \le C_1|\log\epsilon|$ such that $|\Pi_u \varphi_\omega^\epsilon(t_0,x)| \ge 2\delta$, implying that $\varphi_\omega^\epsilon(t_0,x) \not\in V_\delta$.  With this choice of $C_1$, we thus have 
\begin{equation} \label{eq:characteristic1}
	\int_{V_\delta \setminus \mathcal{U}_r} (1-\mathbf{1}_{\{\tilde{\tau}_\delta^\epsilon(x,\omega)\le C_1|\log\epsilon|\}}) p_\epsilon(1,x_0,x)\dx = 0. 
\end{equation}
Putting \eqref{eq:Urbound} and \eqref{eq:characteristic1} into \eqref{eq:densitysplit} proves \eqref{eq:Aepsgoal}, and hence completes the proof of \eqref{eq:modifiedstopping}.

\begin{remark}
	Before concluding the proof, we remark for the careful reader that in the arguments that led to \eqref{eq:modifiedstopping}, the constants $\epsilon$, $\delta$, and $C_1$ have been tuned in a consistent way. The constant $\delta > 0$ was first chosen small enough so that Lemma~\ref{lem:centerstable} applies, and then perhaps smaller depending on $C$ so that \eqref{eq:C1} holds and Lemma~\ref{lem:unstablepert} is valid. Picking $C_1 \ge 1$ in accordance with \eqref{eq:C1} is always possible because the constants $c$ and $\lambda$ obtained from our application of Lemma~\ref{lem:unstablepert} did not depend on the choice of $C_1$ that was to be determined. With $\delta$ and $C_1$ chosen, $\epsilon(C_1,\delta)$ was then picked small enough as required by Lemma~\ref{lem:centerstable} and \eqref{eq:Urbound}.
\end{remark}

Lastly, we argue that \eqref{eq:modifiedstopping} implies Lemma~\ref{lem:scaledexit}. Let $V_\delta^i$ be defined in the same way as $V_\delta$ but with $\bar{x}$ replaced by $e_i$, and let $\tilde{\tau}_{\delta,i}^\epsilon$ have the same definition as $\tilde{\tau}_\delta^\epsilon$ but with $V_\delta$ replaced by $V_\delta^i$. While \eqref{eq:modifiedstopping} was proven for a fixed $\bar{x} \in \Ec\cap \S^{d-1}$, it clearly implies that there exists $C_1 \ge 1$ such that for all $\delta', \epsilon > 0$ sufficiently small there holds 
\begin{equation}\label{eq:modifiedstopping2}
	\P(\tilde{\tau}_{\delta',i}^\epsilon(x,\omega) \le C_1|\log \epsilon| + 1) \ge \frac{1}{2}
\end{equation}
for any $1 \le i \le d$ and $x \in V_{\delta'}^i$. 
Define 
$$\tau^\epsilon(x,\omega) = \inf\{t \ge 0: ||\varphi_\epsilon(t,x,\omega)| - 1| \ge 1/4\}.$$
 We may choose $0 \le \delta \ll \delta' \ll 1$ so that $\S^{d-1}\setminus \Kd \subseteq \cup_{i=1}^dV_{\delta'}^i$ and for every $x \in \S^{d-1}\setminus \Kd$ there exists some $1 \le i \le d$ for which
\begin{equation} \label{eq:stopping1}
	\P(\tau_\delta^\epsilon(x,\omega) \le C_1|\log \epsilon|+1) \ge  \P(\tilde{\tau}_{\delta',i}^\epsilon(x,\omega) \le C_1|\log \epsilon|+1) - \P(\tau^\epsilon(x,\omega)<C_1|\log \epsilon|+1). 
\end{equation}
The inclusion $\S^{d-1}\setminus \Kd \subseteq \cup_{i=1}^dV_{\delta'}^i$ and \eqref{eq:modifiedstopping2} imply that
 $$\P(\tilde{\tau}_{\delta',i}^\epsilon(x,\omega) \le C_1|\log \epsilon|+1) \ge \frac{1}{2},$$
 and so the desired result of Lemma~\ref{lem:scaledexit} follows by using Remark~\ref{rem:energycorollary} to bound the second term on the right-hand side of \eqref{eq:stopping1} for $\epsilon$ sufficiently small.
\end{proof}


\subsection{Concluding the proof of Proposition~\ref{prop:dynCondImpliesExist1}} \label{sec:finishproof}

In this section we use Lemma~\ref{lem:scaledexit} to complete the proof of Proposition~\ref{prop:dynCondImpliesExist1}. The one additional ingredient that we need is a lower bound for the time average $\E\int_0^1 Ax_t^\epsilon \cdot x_t^\epsilon \dt$ when the initial condition $x_0 \in \S^{d-1}$ is not too close to any of the equilibria of $B$. To this end, we first have a statement that follows easily from assumption (3) and a compactness argument. 
\begin{lemma} \label{lem:compactness}
	For $X_0 \in \R^d$, let $X_t$ denote the solution of the ODE 
	\begin{equation} \label{eq:conservativeODE}
		\begin{cases}
			\frac{d}{dt} X_t = B(X_t,X_t), \\
			X_t|_{t=0} = X_0.
		\end{cases}
	\end{equation}
	Fix $\delta > 0$. Under the dynamical assumption (3), there exists $J_\delta \in \N$ and $c_\delta > 0$ such that for all $X_0 \in \Kd$ there is some $0 \le j \le J_\delta$ such that 
	\begin{equation}
		\left|\Pi_{\Kc}^\perp\frac{d^j}{dt^j} X_t|_{t=0} \right| \ge c_\delta.
	\end{equation}
\end{lemma}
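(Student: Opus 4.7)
The plan is a standard compactness-plus-continuity argument once we unpack the statement pointwise. First, observe that $X_t$ is an analytic function of $(t, X_0)$, and in particular each derivative $X_0^{(j)} := \frac{d^j}{dt^j} X_t|_{t=0}$ is a polynomial in $X_0$ (explicitly built by iterating $\dot X = B(X,X)$: $X_0^{(1)} = B(X_0,X_0)$, $X_0^{(2)} = 2B(B(X_0,X_0), X_0)$, and so on). Hence for every fixed $J \in \N$ the function
\begin{equation}
  G_J(X_0) := \max_{0 \le j \le J} \left| \Pi_{\Kc}^\perp X_0^{(j)} \right|
\end{equation}
is continuous in $X_0 \in \R^d$, and is monotone non-decreasing in $J$.

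Next I would show that for every $X_0 \in \Kd$ there is some $J(X_0) \in \N$ for which $G_{J(X_0)}(X_0) > 0$. There are two cases. If $X_0 \notin \Kc$, then $|\Pi_{\Kc}^\perp X_0| > 0$, so $j = 0$ already works. If $X_0 \in \Kc$, then the constraint $\mathrm{dist}(X_0,\Ec) \ge \delta$ in the definition of $\Kd$ forces $X_0 \in \Kc \setminus \Ec = \mathring{\Kc}$, so dynamical assumption (3) applied to the deterministic trajectory $X_t$ at time $t = 0$ yields some $j \ge 1$ with $X_0^{(j)} \notin \Kc$, i.e.\ $|\Pi_{\Kc}^\perp X_0^{(j)}| > 0$. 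In either case some $J(X_0)$ exists.

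Now I would pass to a uniform bound via compactness. By continuity of $G_{J(X_0)}$, for each $X_0 \in \Kd$ there is an open neighborhood $U(X_0) \subset \R^d$ and a constant $\eta(X_0) > 0$ such that $G_{J(X_0)} \ge \eta(X_0)$ on $U(X_0)$. Since $\Kd$ is compact, finitely many such neighborhoods $U(X_0^{(1)}), \dots, U(X_0^{(N)})$ cover $\Kd$. Set
\begin{equation}
  J_\delta := \max_{1 \le i \le N} J(X_0^{(i)}), \qquad c_\delta := \min_{1 \le i \le N} \eta(X_0^{(i)}) > 0.
\end{equation}
Any $X_0 \in \Kd$ lies in some $U(X_0^{(i)})$, and by monotonicity of $G_J$ in $J$ we get $G_{J_\delta}(X_0) \ge G_{J(X_0^{(i)})}(X_0) \ge \eta(X_0^{(i)}) \ge c_\delta$, which is exactly the claimed bound.

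I do not foresee a real obstacle here; the lemma is essentially a packaging of assumption (3) with compactness. The only point that requires a moment of care is the verification that $X_0 \in \Kc \cap \Kd$ implies $X_0 \in \mathring{\Kc}$, which uses the definition of $\Kd$ in an essential way, and the observation that the $X_0^{(j)}$ are polynomials (hence continuous) in $X_0$, which is immediate from the polynomial nature of $B$.
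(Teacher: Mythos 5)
Your proposal is correct and follows essentially the same route as the paper's proof: note that the $t$-derivatives of $X_t$ at $t=0$ are polynomials in $X_0$, use assumption (3) (together with the trivial $j=0$ case when $X_0 \notin \Kc$) to get a pointwise nonvanishing derivative for each $X_0 \in \Kd$, and then a finite-subcover/compactness argument to make $J_\delta$ and $c_\delta$ uniform. The only cosmetic difference is that the paper extracts the uniform lower bound by applying the extreme value theorem to the continuous function $x \mapsto \sum_{j \le J_\delta}|\Pi_{\Kc}^\perp P_j(x)|$ after the covering step, whereas you take the minimum of the local constants $\eta(X_0^{(i)})$ directly; both are equivalent.
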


\begin{proof}
	
By the dynamical assumption (3), for every $X_0 \in \Kd$ there is some $j(X_0) \in \N \cup \{0\}$ such that
	\begin{equation} \label{eq:nonzeroderivative}
	\Pi_{\Kc}^\perp\frac{d^j}{dt^j} X_t|_{t=0}  \neq 0.
	\end{equation}
Note now that
$$\frac{d^j}{dt^j}X_t|_{t=0} = P_j(X_0)$$
	for some homogeneous polynomial $P_j$ of degree $j+1$. It follows then by \eqref{eq:nonzeroderivative} that for each $X_0 \in \Kd$ there is some open set $\mathcal{U}_{X_0}$ containing $X_0$ such that $$\Pi_{\Kc}^\perp P_{j(X_0)}(x) \neq 0$$
	for every $x \in \mathcal{U}_{X_0}$. Since $\Kd$ is compact, there exists $m \in \N$ and $\{X_0^i\}_{i=1}^m \subseteq \Kd$ such that $\Kd \subseteq \cup_{i=1}^m \mathcal{U}_{X_0^j}$. Let $J_\delta = \max_{1\le i \le m}j(X_0^i)$. Then, for every $x \in \Kd$ we have $P_j(x) \neq 0$ for some $j \le J_\delta$. The result then follows from the extreme value theorem applied with the continuous function 
	$$x \mapsto \sum_{j=1}^{J_\delta}|P_j(x)|.$$
\end{proof}

Lemma~\ref{lem:compactness} and the approximation arguments in \cite[Section 3]{BedrossianLiss22} give us a lower bound for $\E\int_0^1 Ax_t^\epsilon \cdot x_t^\epsilon \dt$ when $x_0 \in \Kd$. We omit the details of the proof for the sake of brevity.

\begin{lemma} \label{lem:conservativediss}
	Fix $\delta > 0$. Suppose that there exists $c > 0$ and $J \in \N$ such that for every $X_0 \in \Kd$ the solution of \eqref{eq:conservativeODE} satisfies
		\begin{equation}
		\left|\Pi_{\Kc}^\perp\frac{d^j}{dt^j} X_t|_{t=0} \right| \ge c
	\end{equation}
	for some $0 \le j \le J$. Then, there exists $c_\delta > 0$ depending on $c$ and $J$ such that for all $\epsilon$ sufficiently small and $x_0 \in \Kd$ we have 
\begin{equation} \label{eq:cdelta}
\E\int_0^1 Ax^\epsilon_t \cdot x^\epsilon_t \dt \ge c_\delta.
\end{equation}
\end{lemma}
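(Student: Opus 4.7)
The plan is to reduce the lemma to a uniform deterministic lower bound via compactness, and then transfer that bound to the SDE through a short-time comparison estimate between $x_t^\epsilon$ and the solution $X_t$ of \eqref{eq:conservativeODE} with the same initial data. Since $A$ is positive definite on $\Kc^\perp$, there exists $\lambda > 0$ with $Ax \cdot x \ge \lambda |\Pi_{\Kc}^\perp x|^2$, so it suffices to produce a positive lower bound for $\E \int_0^1 |\Pi_{\Kc}^\perp x_t^\epsilon|^2 \dt$ uniform in $x_0 \in \Kd$ and in all sufficiently small $\epsilon$.

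For the deterministic half, I would introduce $f:\Kd \to [0,\infty)$ defined by $f(X_0) = \int_0^1 |\Pi_{\Kc}^\perp X_t|^2 \dt$, which is continuous by continuous dependence of ODE solutions on initial data. Since $\Kd$ is compact, it is enough to rule out $f(X_0) = 0$. But if $f(X_0) = 0$, then $X_t \in \Kc$ for all $t \in [0,1]$, which forces every time derivative $\frac{d^j}{dt^j} X_t|_{t=0}$ to lie in $\Kc$ and so contradicts the hypothesis of the lemma. Hence $f$ attains a positive infimum $c_0 > 0$ on $\Kd$.

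For the stochastic comparison, fix $x_0 \in \Kd$ and let $X_t$ be the associated deterministic trajectory, which by energy conservation satisfies $|X_t| = |x_0| \le 3/2$ on $[0,\infty)$. A standard energy estimate analogous to \eqref{eq:energyestimate} applied to \eqref{eq:scaledSDE} bounds $\sup_{0\le t \le 1} \E |x_t^\epsilon|^2$ uniformly in $\epsilon \in (0,1)$ by a constant depending only on $x_0$ and $\max_j \sigma_j^2$. By bilinearity of $B$, the difference $e_t = x_t^\epsilon - X_t$ satisfies an SDE whose drift depends linearly on $e_t$ up to an $O(\epsilon)$ perturbation coming from the dissipation, and whose diffusion coefficient is of size $\epsilon^{3/2}$. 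Applying It\^o to $|e_t|^2$, using the uniform moment bound to absorb the quadratic terms, and then Gr\"onwall on $[0,1]$ should yield $\sup_{0\le t \le 1}\E|e_t|^2 \leqc \epsilon$ with an implicit constant uniform in $x_0 \in \Kd$.

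Combining the two ingredients via Cauchy--Schwarz then gives
\begin{align*}
\E \int_0^1 |\Pi_{\Kc}^\perp x_t^\epsilon|^2 \dt &\ge \int_0^1 |\Pi_{\Kc}^\perp X_t|^2 \dt - 2 \Big(\int_0^1 |\Pi_{\Kc}^\perp X_t|^2 \dt\Big)^{1/2}\Big(\int_0^1 \E|e_t|^2 \dt\Big)^{1/2} - \int_0^1 \E|e_t|^2 \dt \\
&\ge c_0 - C\sqrt{\epsilon},
\end{align*}
which is bounded below by $c_0/2$ for $\epsilon$ sufficiently small; multiplying by $\lambda$ then produces the desired $c_\delta$. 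The main technical point is the stochastic comparison, but since $\Kd$ is bounded and we work on the fixed interval $[0,1]$, the Lipschitz constant of $B$ on the relevant region is $O(1)$ and a routine Gr\"onwall argument suffices, exactly as in the approximation scheme used in \cite[Section~3]{BedrossianLiss22}.
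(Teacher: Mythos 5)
Your proof is correct for the lemma as stated with a fixed $B$, and it is pleasantly self-contained. The stochastic comparison is the crux, and the algebraic point worth making explicit (which your write-up gestures at but does not state) is that $e_t \cdot B(e_t,e_t) = 0$ by energy conservation, so that
\begin{equation}
e_t\cdot\big[B(x^\epsilon_t,x^\epsilon_t)-B(X_t,X_t)\big] = 2\,e_t\cdot B(X_t,e_t) = O(|e_t|^2),
\end{equation}
using $|X_t|=|x_0|\le 3/2$ from conservation. This is exactly what makes the quadratic nonlinearity harmless in Gr\"onwall. With this and Young's inequality on the $-\epsilon A x^\epsilon_t$ term one in fact gets $\sup_{t\le 1}\E|e_t|^2\lesssim\epsilon^2$ rather than $\lesssim\epsilon$, though either rate suffices for your conclusion.

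Where you diverge from the paper: the paper defers to the approximation arguments of \cite[Section 3]{BedrossianLiss22}, which, given the way the hypothesis is phrased in terms of the quantitative parameters $c$ and $J$, amounts to a direct Taylor-expansion bound at $t=0$ — the order-$J$ Taylor polynomial of $\Pi_{\Kc}^\perp X_t$ has a coefficient of size $\ge c$, the remainder is controlled by bounds on $B$, and one obtains a lower bound on $\int_0^1|\Pi_{\Kc}^\perp X_t|^2\,\dt$ that is explicitly a function of $c$, $J$, and $\|B\|$. Your compactness argument produces \emph{some} positive infimum $c_0$, which proves the stated inequality for a fixed $B$, but it does not exhibit the claimed dependence of $c_\delta$ on $c$ and $J$ alone. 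That dependence is not decorative: in the proof of Theorem~\ref{thm:mainswitched} the lemma is applied with $B=B_b$ for $b$ varying over $\mathcal{S}$, and the paper relies on $c_\delta$ being uniform in $b$ precisely because it is controlled by $c$ and $J$ (which are uniform via Lemma~\ref{lem:SwitchingTransverse}). A compactness argument over $X_0\in\Kd$ alone does not give that, and compactifying jointly in $(b,X_0)$ does not mesh cleanly with the pointwise structure of Lemma~\ref{lem:SwitchingTransverse}. So your route is a correct and more elementary proof of this lemma in isolation, but the quantitative Taylor route is what buys the uniformity needed downstream.
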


We can now finally conclude the proof of Proposition~\ref{prop:dynCondImpliesExist1}.

\begin{proof}[Proof of Proposition~\ref{prop:dynCondImpliesExist1}]
	
Let $C,c,\delta > 0$ be as in the statement of Lemma~\ref{lem:scaledexit} and let $c_\delta >0$ be as in Lemma~\ref{lem:conservativediss}, which we may apply here by Lemma~\ref{lem:compactness} and dynamical assumption (3). As described earlier in Section~\ref{sec:rescaled}, it is sufficient to show that there are constants $c_0, C_0 > 0$ such that for all $\epsilon>0$ sufficiently small and $x_0 \in \S^{d-1}$ we have 
	\begin{equation} \label{eq:finishproof}
		\E \int_0^{C_0 |\log \epsilon|}Ax^\epsilon_t \cdot x^\epsilon_t \dt \ge c_0.
	\end{equation}
We claim that \eqref{eq:finishproof} holds with $C_0 = C+1$ and $c_0 = c c_\delta$, provided that $\epsilon$ is small enough so that both Lemmas~\ref{lem:scaledexit} and~\ref{lem:conservativediss} apply. If $x_0 \in \Kd \cap \S^{d-1}$, then the bound is immediate from Lemma~\ref{lem:conservativediss}. Suppose then that $x_0 \in \S^{d-1} \setminus \Kd$. For simplicity of notation, let $\tau(\omega) = \inf\{t \ge 0:x_t^\epsilon(\omega) \in \Kd\}$ and define $\mathcal{A}$ to be the event that $\tau \le C|\log \epsilon|$. Let $\mathcal{F}_\tau$ be the $\sigma$-algebra of events determined prior to the stopping time $\tau$. Define also $D(x) = Ax \cdot x$. It follows then by the strong Markov property, Lemma~\ref{lem:scaledexit},  and Lemma~\ref{lem:conservativediss} that 
\begin{align*}
	\E \int_0^{(C+1)|\log \epsilon|} A x_t^\epsilon \cdot x_t^\epsilon \dt &\ge \int_{\mathcal{A}} \int_0^{C|\log \epsilon| + 1}A x_t^\epsilon \cdot x_t^\epsilon \dt \P(\dee \omega) \\ 
	& \ge \int_{\mathcal{A}} \int_0^1 D(x_{\tau+t}^\epsilon) \dt \P(\dee \omega) \\ 
	& = \int_0^1 \int_{\mathcal{A}} \E(D(x^\epsilon_{\tau+t})|\mathcal{F}_\tau)\P(\dee \omega) \dt \\ 
	& = \int_{\mathcal{A}} \left(\int_0^1 P_t^\epsilon D(x^\epsilon_\tau) \dt \right)\P(\dee \omega) \\ 
	& \ge \P(\mathcal{A}) c_\delta \ge c c_\delta. 
\end{align*}
\end{proof}

\section{Generic nonlinearities}
\label{sec:genericConditions1}

\newcommand{\Disc}{\operatorname{Disc}}
\newcommand{\Hc}{\mathcal{H}}
\newcommand{\sym}{{\rm sym}}

To recap, the goal of this section is to prove Proposition \ref{prop:genericconditions}: we seek to show that for an open, dense and full Lebesgue-measure subset of $b \in \Bc$ that the vector field $B_b$ has the following properties: given $J < \frac{2d}{3}$ and $\set{\sigma_i}_{1 \leq i \leq d}$ for which $|\set{i : \sigma_i \neq 0}| \geq 2$, one has that
\begin{itemize}
	\item[(1)] (Hyperbolicity) The linearization $D_x B$ at each equilibrium of $\Ec$ admits a single eigenvalue along the imaginary axis; 
\item[(2)] (Hypoellipticity) The collection of vector fields $\{B_b + \eps Mx, \sigma_1 e_1, \ldots, \sigma_d e_d\}$  satisfies the parabolic H\"ormander condition for all $\eps \in [0,1]$ and for any fixed $d \times d$ matrix $M$. 
	\item[(3)] (Dynamics on $\Kc$) For any solution $x(t)$ to the deterministic problem $\dot x = B_b(x,x)$, it holds that if $x(t) \in \Kc \setminus \Ec$, then
	\[\frac{d^j}{dt^j} x(t) \notin \mathcal{\Kc} \]
	for some $j \geq 1$. 
\end{itemize}

The plan this section is as follows. In Section \ref{subsec:constraintClass3} we provide an explicit description of the constraint class from Definition \ref{def:CCI} and give some comments for how we will practically work with it. Section \ref{subsec:genercHyperbolicity3} establishes generic hyperbolicity of the equilibria along $\Ec$ as in (1) above, and Section \ref{subsec:genericHypoellipticity} establishes generic hypoellipticity as in (2). We conclude in Section \ref{subsec:genericQuadPassthrough} with the proof of dynamical condition (3). 

\begin{remark} 
	In this section, we will use the word \emph{generic} to mean both ``open and dense'' \emph{and} ``full Lebesgue measure''. In practice however the conditions (1) -- (3) are easily seen to be open conditions in the coefficients $b \in \Bc$, and so for us it will be enough to check (1) -- (3) hold on a full Lebesgue-measure set. 
\end{remark}



\subsection{The constraint class $\Bc$}\label{subsec:constraintClass3}

\begin{proposition}\label{prop:constraintClass2}
	It holds that $b \in \Bc$ if and only if 
	\begin{align}
		b^i_{jk} = b^i_{kj} \quad & \text{for all} \quad 1 \leq i,j,k \leq d \, , \\
		\label{eq:noDoubleEntries2}
		b^{i}_{ii} = b^i_{ij} = b^i_{jj} = 0 \quad &\text{for all} \quad 1 \leq i, j \leq d \, , \quad \text{ and } \\ \label{eq:jacobiIdentity}
		b^i_{jk} + b^j_{ik} + b^k_{ij} = 0 \quad &\text{for all} \quad 1\leq i,j,k \leq d \,. 
	\end{align}
\end{proposition}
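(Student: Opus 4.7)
The plan is to expand each of the three defining conditions of $\Bc$ as a polynomial identity in the coordinates of $x$ and read off the resulting constraints on the coefficients $b^i_{jk}$. As noted in the text, we have already imposed the symmetry $b^i_{jk}=b^i_{kj}$ without loss of generality, since only the symmetrization of $B^i$ influences $B(x,x)$; so this first condition in the proposition is free.

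Next, I would expand \eqref{eq:energyPres}:
\[
x\cdot B(x,x) = \sum_{i,j,k} b^i_{jk}\, x_i x_j x_k \equiv 0.
\]
Collecting the coefficient of $x_i x_j x_k$ for three distinct indices $i,j,k$ (there are $6$ orderings, pairing up via symmetry in the lower two indices) yields precisely the Jacobi-type identity
\[
b^i_{jk}+b^j_{ik}+b^k_{ij}=0.
\]
Reading off the coefficient of $x_i^2 x_j$ (with $i\neq j$) gives $2b^i_{ij}+b^j_{ii}=0$, and the coefficient of $x_i^3$ gives $b^i_{ii}=0$. In parallel, the coordinate-axis condition \eqref{eq:vanishCoordAxes} immediately says $B(e_j,e_j)^i=b^i_{jj}=0$ for every $i,j$. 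Combining $b^j_{ii}=0$ with $2b^i_{ij}+b^j_{ii}=0$ then forces $b^i_{ij}=0$, completing the derivation of \eqref{eq:noDoubleEntries2} and \eqref{eq:jacobiIdentity}.

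For the converse, I would verify that the stated conditions imply all of \eqref{eq:energyPres}, \eqref{eq:divFree}, \eqref{eq:vanishCoordAxes}. Energy conservation and vanishing on axes follow by reversing the coefficient-matching above. For the divergence-free condition, a direct computation gives
\[
\nabla\cdot B(x,x)=2\sum_{i,k} b^i_{ik}\,x_k,
\]
and each $b^i_{ik}$ vanishes by \eqref{eq:noDoubleEntries2}; alternatively, specializing the Jacobi identity \eqref{eq:jacobiIdentity} to $j=i$ gives $2b^i_{ik}+b^k_{ii}=0$, which combined with the vanishing of the diagonal entries recovers the same conclusion. Thus \eqref{eq:divFree} is actually a consequence of the other two defining properties once the symmetry convention is fixed.

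There is no real obstacle here: the proof is a short polynomial-identity calculation, and the only subtlety worth remarking on is the redundancy just observed — namely that the divergence-free condition is implicit in energy conservation plus vanishing on coordinate axes (given the symmetry convention), which is why it does not appear as a separate family of constraints in the statement.
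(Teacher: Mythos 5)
Your proof is correct and follows essentially the same coefficient-matching route as the paper's own proof: symmetrize, expand $x\cdot B(x,x)$, read off the Jacobi identity and the diagonal constraints, then use $B(e_j,e_j)=0$ to eliminate $b^i_{jj}$ and (via the $j=k$ specialization of Jacobi) $b^i_{ij}$. Your closing observation that the divergence-free condition is redundant given the symmetry convention is accurate and is in fact implicit in the paper's proof as well: there the constraint $\sum_i b^i_{ik}=0$ is written down but never actually used, since \eqref{eq:noDoubleEntries2} already forces each $b^i_{ik}$ to vanish individually. This is a worthwhile remark even if neither argument strictly needs it.
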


	\begin{remark} \label{eq:specifyFromJacobiIdentity}
		In practice, an element $b \in \Bc$ is specified by the coordinates $b^i_{jk}$ and $b^{j}_{ik}$ for all triples $\{ i, j, k\} \subset \{ 1, \dots, d\}$ of distinct coordinates, since $b^k_{ij}$ is specified in terms of the other two. This idea will be used several times in the coming proofs. 
	\end{remark}

\begin{proof}[Proof of Proposition \ref{prop:constraintClass2}]
	Below, we focus on checking that $b \in \Bc$ implies \eqref{eq:noDoubleEntries2} and \eqref{eq:jacobiIdentity}; the converse implication is straightforward to check and amounts to reading the following proof in reverse.  

	Equation \eqref{eq:divFree} gives
	\[\nabla \cdot B_b(x,x) = \sum_{i,j,k}\frac{\partial}{\partial x^i}b^i_{jk}x^jx^k = 2b^i_{ik}x^k=0.\] Matching $x^k$ coefficients then implies \[\sum_{i}b^i_{ik}=0 \text{ for each }1\leq k \leq d.\]
	From \eqref{eq:energyPres}, 
	\[x\cdot B_b(x,x) = \sum_{i,j,k} b^i_{jk}x^ix^jx^k=0.\] Matching coefficients by permuting all combinations then gives \eqref{eq:jacobiIdentity} for all $i,j,k$, noting that one or more of $i,j,k$ may coincide. 
	Setting $i=j=k$ in \eqref{eq:jacobiIdentity} gives \[b^i_{ii}=0 \, , \] while $j=k$ gives \begin{align}\label{eq:partialSumFormula2}b^i_{jj}+2b^j_{ij}=0 \,. \end{align}
	Finally, condition \eqref{eq:vanishCoordAxes} imposes that
	\begin{align}
		\label{eq:noDoubleTerms}
		b_{jj}^i = 0 
	\end{align}
	for all $1 \leq i, j \leq d$, which in conjunction with 
	\eqref{eq:partialSumFormula2} implies \eqref{eq:noDoubleEntries2}. 
\end{proof}

\subsection{Generic hyperbolicity along $\Ec$} \label{subsec:genercHyperbolicity3}


We now turn to the first of our generic dynamical conditions, hyperbolicity for the equilibria of $B$ along the coordinate axes $\Ec = \cup_{i = 1}^d \Span\set{e_i}$. 

\begin{proposition}\label{prop:genericHyperbolicity3}
	Assume $d \geq 4$. 
	For a full Lebesgue-measure set of $b \in \Bc$, it holds that $(DB_b)_{x}$ admits a single eigenvalue along the imaginary axis for all $x \in \Ec \setminus \{ 0 \}$. 
\end{proposition}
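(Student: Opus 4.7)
The plan is to reduce hyperbolicity at each equilibrium $\alpha e_i \in \Ec\setminus\{0\}$ to a condition on an associated $(d-1)\times(d-1)$ matrix $\tilde L_i$, express the failure of that condition as the vanishing of a polynomial $P_i$ on $\Bc$, exhibit one $b \in \Bc$ where every $P_i$ is nonzero, and conclude by measure-zero of proper real-algebraic subvarieties. By bilinearity, $(DB_b)_{\alpha e_i} = \alpha L_i$ with $L_i := 2 B_b(e_i,\cdot)$; since real scaling preserves the imaginary axis, it suffices to analyze $L_i$. Property \eqref{eq:vanishCoordAxes} gives $L_i(e_i)=0$, and differentiating \eqref{eq:energyPres} at $\alpha e_i$ gives $e_i \cdot L_i(v) = 0$ for every $v$, so in the standard basis both the $i$-th row and column of $L_i$ vanish. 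Let $\tilde L_i$ denote the $(d-1)\times(d-1)$ minor obtained by deleting them; its entries are $(\tilde L_i)_{jk} = 2b^j_{ik}$, and \eqref{eq:noDoubleEntries2} forces the diagonal to be zero. Because $\mathrm{Range}(L_i) \subseteq e_i^\perp \not\ni e_i$, the eigenvalue $0$ of $L_i$ has algebraic multiplicity equal to one plus its multiplicity for $\tilde L_i$; combined with $L_i$ and $\tilde L_i$ having the same nonzero spectrum, $(DB_b)_{\alpha e_i}$ admits a single eigenvalue on the imaginary axis iff $\tilde L_i$ has no eigenvalue on the imaginary axis.

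Let $p_i(\lambda) := \det(\lambda I - \tilde L_i)$. Having an imaginary eigenvalue is equivalent to $p_i(\lambda)$ and $p_i(-\lambda)$ sharing a root, i.e.\ to the vanishing of the resultant $P_i(b) := \mathrm{Res}_\lambda(p_i(\lambda), p_i(-\lambda))$, which is a polynomial in $b$ since the entries of $\tilde L_i$ are linear in $b$. The linear map $b \mapsto \tilde L_i(b)$ from $\Bc$ to zero-diagonal $(d-1)\times(d-1)$ matrices is surjective: by Remark~\ref{eq:specifyFromJacobiIdentity}, for each triple $\{i,j,k\}$ containing $i$ the pair $(b^j_{ik}, b^k_{ij})$ can be freely prescribed, with $b^i_{jk}$ determined via \eqref{eq:jacobiIdentity}; these correspond precisely to the off-diagonal entries $(\tilde L_i)_{jk}$ and $(\tilde L_i)_{kj}$, while parameters from triples not containing $i$ may be set to zero. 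For $n := d - 1 \geq 3$, consider the zero-diagonal matrix
\[
M_n = \begin{pmatrix} 0 & 1 & 0 & \cdots & 0 \\ 0 & 0 & 1 & \cdots & 0 \\ \vdots & & \ddots & \ddots & \vdots \\ 0 & 0 & \cdots & 0 & 1 \\ 1 & 1 & 0 & \cdots & 0 \end{pmatrix},
\]
with characteristic polynomial $\lambda^n - \lambda - 1$. Substituting $\lambda = i\beta$ with $\beta \in \R$ and separating real from imaginary parts gives a system with no real solution, because regardless of $n \bmod 4$ the constant term $-1$ prevents simultaneous vanishing. Hence each $P_i$ is nonzero on $\Bc$, so $\bigcup_{i=1}^d \{P_i = 0\}$ is a finite union of proper real-algebraic subvarieties of $\Bc$ and has Lebesgue measure zero.

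The principal subtlety is the surjectivity of $b \mapsto \tilde L_i$: one must verify that the two-dimensional freedom per triple afforded by the Jacobi identity \eqref{eq:jacobiIdentity} suffices to realize arbitrary pairs of off-diagonal entries of $\tilde L_i$, which it does because $(b^j_{ik}, b^k_{ij})$ appear in no other constraint involving $i$. The hypothesis $d \geq 4$ is essential: when $d = 3$, each $\tilde L_i$ is a $2\times 2$ zero-diagonal, trace-zero matrix with off-diagonals $2b^j_{ik}, 2b^k_{ij}$, so hyperbolicity reduces to the strict sign condition $b^j_{ik}\cdot b^k_{ij} > 0$; but \eqref{eq:jacobiIdentity} forces $b^1_{23} + b^2_{13} + b^3_{12} = 0$, so these three numbers cannot all share a sign and the condition cannot hold simultaneously for $i = 1, 2, 3$.
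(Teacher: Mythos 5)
Your proof is correct and takes a genuinely different — and substantially more elementary — route than the paper's.

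Both you and the paper begin the same way: reduce to the observation that $(DB_b)_{e_i}$ has a zero $i$-th row and column, so its spectral behavior is governed by the $(d-1)\times(d-1)$ \emph{hollow} minor $\tilde L_i$, and note (using the Jacobi relation as in Remark~\ref{eq:specifyFromJacobiIdentity}) that $b\mapsto \tilde L_i$ is surjective onto hollow matrices. From there the two proofs diverge. The paper's proof of Lemma~\ref{lem:genericHollow} is analytic: it first shows generic hollow matrices have simple spectrum (via the discriminant), then argues that $\Phi(A)=\prod_i \Re\,\lambda_i(A)$ is real-analytic where the spectrum is simple and that $\{\Phi\neq 0\}$ is dense — the density step (Lemma~\ref{lem:pertStepHollow} via Lemma~\ref{lem:snapToHollow}) is the technical heart, requiring an inductive orthogonal-conjugation argument to ``snap'' nearby traceless matrices onto the hollow slice. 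Your argument sidesteps all of this: the failure of hyperbolicity is contained in the zero set of a single explicit polynomial $P_i(b)=\mathrm{Res}_\lambda\bigl(p_i(\lambda),p_i(-\lambda)\bigr)$, and you exhibit a concrete hollow companion matrix $M_n$ with characteristic polynomial $\lambda^n-\lambda-1$ at which $P_i\neq 0$ (and indeed $\lambda^n-\lambda-1$ and $(-1)^n\lambda^n+\lambda-1$ share no roots for any $n\geq 2$, by subtraction). This immediately yields that $\{P_i=0\}\cap\Bc$ is a proper algebraic subvariety, hence Lebesgue-null, closed, and nowhere dense — so you get openness and density for free, not just full measure. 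Your remark that $d=3$ genuinely fails, via the sign obstruction forced by $b^1_{23}+b^2_{13}+b^3_{12}=0$, is a nice sanity check that does not appear in the paper.

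One small imprecision worth correcting: you write that ``having an imaginary eigenvalue is \emph{equivalent} to $p_i(\lambda)$ and $p_i(-\lambda)$ sharing a root.'' Only the forward implication holds — $p_i(\lambda)$ and $p_i(-\lambda)$ can share a \emph{real} root (e.g.\ if $\pm 1\in\sigma(\tilde L_i)$) without $\tilde L_i$ having any eigenvalue on $i\R$. The argument is unaffected, since you only need the bad set to be \emph{contained} in $\{P_i=0\}$, but the sentence should say ``implies'' rather than ``is equivalent to.''
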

To prove Proposition \ref{prop:genericHyperbolicity3} it  suffices to check $(D_xB_b)_{e_i}$ for each $i \in \{ 1, \dots, d\}$. 
We directly compute
\[(DB_b)_{e_i}(e_j) = 2 \begin{pmatrix}
	b^1_{ij} \\ b^2_{ij} \\ \vdots \\ b^d_{ij}
\end{pmatrix} \, , \]
which for $i = 1$ (the other cases are argued similarly) gives 
\[
(DB_b)_{e_1} = 2 \begin{pmatrix}
	0 & 0 & 0 & 0 & \cdots  & 0 & 0 \\ 
	0 & 0 & b^2_{13} & b^2_{14} & \cdots & b^2_{1,d-1} & b^2_{1d} \\ 
	0 & b^3_{12} & 0 & b^3_{14} & \cdots & b^3_{1,d-1} & b^3_{1d} \\ 
	0 & b^4_{12} & b^4_{13} & 0 &  & b^4_{1,d-1} & b^4_{1d}  \\ 
	\vdots & \vdots & \vdots &  & \ddots & & \vdots  \\ 
	0 & b^{d-1}_{12} & b^{d-1}_{13} & b^{d-1}_{14} & \cdots & 0 & b^{d-1}_{1d}\\ 
	0 & b^d_{12} & b^d_{13} & b^d_{14} & \cdots & b^d_{1,d-1} & 0 
\end{pmatrix}
\]
There are exactly $(d-1)^2 - (d-1) = (d-1)(d-2)$ distinct entries of $(b^i_{jk})$ appearing above, with two representatives each of the triples $(1,j,k)$ for $j, k \in \{ 2, \dots, d\}$ (c.f. Remark \ref{eq:specifyFromJacobiIdentity}). So, as $b$ varies, the $(1,1)$-minor of $(DB_b)_{e_1}$ ranges over all possible \emph{hollow matrices} -- here, a matrix is called \emph{hollow} if its diagonal entries vanish. 

Thus, to prove Proposition \ref{prop:genericHyperbolicity3} it suffices to check the following. Below, for $m \geq 1$ we write $\Hc_m$ for the space of $m\times m$ hollow matrices. 
\begin{lemma}\label{lem:genericHollow}
	For any $m \geq 3$, there is a generic set of $A \in \Hc_m$ for which $\sigma(A) \cap i \R = \emptyset$. 
\end{lemma}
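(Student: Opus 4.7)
The plan is to exhibit a nonzero polynomial $R : \mathcal{H}_m \to \R$ whose vanishing locus contains
$$Z \;:=\; \{A \in \mathcal{H}_m : \sigma(A) \cap i\R \neq \emptyset\}.$$
The set $Z$ is closed in $\mathcal{H}_m$ by continuity of eigenvalues, and the zero locus of a nonzero polynomial on a finite-dimensional real vector space is automatically closed, nowhere dense, and Lebesgue-null. Thus the lemma will follow with the generic set being $\mathcal{H}_m \setminus Z$.

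To construct $R$, I will exploit the fact that purely imaginary eigenvalues of a real matrix occur in conjugate pairs $\pm i\mu$ with $\mu \in \R$. Hence if $A \in Z$, either $\det A = 0$ (the case $\mu = 0$), or $A$ admits a pair of nonzero eigenvalues summing to zero (the case $\mu \neq 0$). This motivates the definition
$$R(A) \;:=\; \det(A) \cdot \prod_{1 \leq i < j \leq m}\bigl(\lambda_i(A) + \lambda_j(A)\bigr),$$
where $\lambda_1(A), \dots, \lambda_m(A) \in \C$ are the eigenvalues of $A$ with multiplicity. As a symmetric function of $\lambda_1, \dots, \lambda_m$, the quantity $R(A)$ is polynomial in the elementary symmetric functions of the eigenvalues, and therefore in the entries of $A$ via the coefficients of the characteristic polynomial. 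By design, $R$ vanishes on all of $Z$.

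The one nontrivial step is to produce a single $A_* \in \mathcal{H}_m$ with $R(A_*) \neq 0$. I will take $A_*$ to be the companion matrix of
$$p(\lambda) \;:=\; \prod_{k=1}^{m-1}(\lambda - k) \cdot \left(\lambda + \tfrac{m(m-1)}{2}\right).$$
Its roots $1, 2, \dots, m-1, -\tfrac{m(m-1)}{2}$ are real, nonzero, and sum to zero, so the coefficient of $\lambda^{m-1}$ in $p$ vanishes and $A_*$ lies in $\mathcal{H}_m$. For $m \geq 3$ no pair of these roots sums to zero: pairs drawn from $\{1, \dots, m-1\}$ are strictly positive, while $i - \tfrac{m(m-1)}{2} < 0$ for $i \in \{1, \dots, m-1\}$, using $\tfrac{m(m-1)}{2} > m-1$ whenever $m \geq 3$. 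Hence $R(A_*) \neq 0$. I do not anticipate any substantial obstacle here; the content of the argument is essentially the identification of the correct symmetric polynomial in the eigenvalues together with the construction of one explicit hollow matrix with a sufficiently generic spectrum.
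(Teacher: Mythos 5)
Your proof is correct and takes a genuinely different, and substantially more elementary, route than the paper. The paper works with the function $\Phi(A) = \prod_i \Re(\lambda_i(A))$, which is only real-analytic on the simple-spectrum stratum (not polynomial, since real parts of eigenvalues are not algebraic in the matrix entries); this forces the paper to first remove the discriminant locus, invoke structure theory of real-analytic varieties, and then prove a nontrivial density statement (Lemma~\ref{lem:pertStepHollow}) via an inductive ``snap to hollow'' orthogonal-similarity argument (Lemmas~\ref{lem:snapToHollow} and Claim~\ref{cla:snapStep}). Your key observation is to replace $\prod \Re(\lambda_i)$ with $\det(A)\cdot\prod_{i<j}(\lambda_i+\lambda_j)$: this still vanishes whenever $A$ has an eigenvalue on $i\R$ (a zero eigenvalue kills $\det$, while a nonzero purely imaginary pair $\pm i\mu$ kills one of the sum factors), but, being symmetric in the $\lambda_i$, it is an honest polynomial in the characteristic-polynomial coefficients and hence in the entries of $A$, with no need for the discriminant/stratification machinery. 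With $R$ a polynomial, the entire density/perturbation argument of the paper collapses to exhibiting a single point of $\Hc_m$ where $R\neq 0$, and your companion-matrix construction with roots $1,\dots,m-1,-m(m-1)/2$ does this cleanly: the trace vanishes so the companion matrix is hollow, $\det\neq 0$, and for $m\ge 3$ no two roots sum to zero. This buys a shorter and more transparent proof at the cost of the explicit perturbation information (how to orthogonally conjugate a near-hollow matrix to a hollow one) that the paper's route incidentally produces but does not actually use elsewhere.
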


The proof of this lemma constitutes the remainder of Section \ref{subsec:genercHyperbolicity3}. 

\bigskip

First, we argue that a generic subset of $A \in \Hc_m$ have simple spectrum, i.e., all algebraic multiplicities are equal to one. This is a consequence of the following standard Lemma (see, e.g., \cite{gelfand1994hyperdeterminants}).

\begin{lemma}
	There exists a polynomial $\Disc : M_{m \times m}(\R) \to \R$ with integer coefficients (a.k.a. the \emph{discriminant} $\Disc(A)$ of a matrix $A$) with the property that $\Disc(A) \neq 0$ iff $A$ has simple spectrum. 
\end{lemma}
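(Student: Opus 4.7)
The plan is to construct $\Disc(A)$ as the discriminant of the characteristic polynomial $p_A(\lambda) = \det(\lambda I - A)$, and to exploit the classical correspondence between vanishing of the discriminant and existence of repeated roots. Concretely, over an algebraic closure we may factor $p_A(\lambda) = \prod_{i=1}^m (\lambda - \lambda_i)$ and define
\[ \Disc(A) := \prod_{1 \le i < j \le m} (\lambda_i - \lambda_j)^2. \]
This quantity is manifestly a symmetric polynomial in $\lambda_1, \dots, \lambda_m$ with integer coefficients, and $A$ has simple spectrum precisely when $\Disc(A) \neq 0$. The only thing left to verify is that, viewed as a function of $A \in M_{m \times m}(\R)$, $\Disc(A)$ is a polynomial in the entries of $A$ with integer coefficients.

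To do this I would appeal to the fundamental theorem of symmetric polynomials: since $\Disc$ is symmetric in the $\lambda_i$, it can be written as a polynomial with integer coefficients in the elementary symmetric polynomials $e_1(\lambda), \dots, e_m(\lambda)$. But up to sign these are precisely the coefficients of $p_A$, which in turn are (by the Leibniz expansion of the determinant) polynomials in the entries of $A$ with integer coefficients. Composing gives $\Disc(A) \in \Z[a_{ij}]$. Alternatively, one may use the resultant formula $\Disc(p_A) = (-1)^{m(m-1)/2}\operatorname{Res}(p_A, p_A')$ and observe that the Sylvester matrix has entries polynomial in the entries of $A$ with integer coefficients, so its determinant does as well.

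Neither step is really an obstacle; the content of the lemma is essentially bookkeeping of classical facts about the discriminant of a univariate polynomial, combined with the observation that the characteristic polynomial depends polynomially (with integer coefficients) on the matrix entries. The only mild subtlety is to make sure that the definition via roots in an algebraic closure descends to a \emph{real} polynomial on $M_{m \times m}(\R)$, which is handled either by the symmetric function argument above or by the Sylvester/resultant presentation.
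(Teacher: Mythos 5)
The paper does not prove this lemma — it is stated as a standard fact with a citation to Gelfand--Kapranov--Zelevinsky. Your argument (define $\Disc(A)$ as the discriminant of the characteristic polynomial, check that it vanishes exactly when there is a repeated eigenvalue, and obtain integrality and polynomiality in the matrix entries via the fundamental theorem of symmetric polynomials or equivalently via the Sylvester resultant) is the standard proof and is correct.
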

It is straightforward to check that $\{ \Disc(A) = 0\} \cap \Hc_m$ is a proper algebraic subvariety of $\Hc_m$, hence has Lebesgue measure zero and is closed with empty interior. 

    



Next, we recall that at matrices $A \in M_{m \times m}(\R)$ with simple spectrum, the eigenvalues $\lambda_i : M_{m \times m}(\R) \to \C, 1 \leq i \leq m$ vary in a real-analytic way (see, e.g., \cite[Chapter 2.1.1]{kato2013perturbation}). Form the function $\Phi : \mathcal{H}_m \to \R$ given by 
\[\Phi(A) = \prod_{1}^{m} \Re( \lambda_i(A)) \,, \]
noting that we seek to show $\{ \Phi \neq 0\}$ is a generic subset of $\mathcal{H}_m$. 
It is a standard fact in the structure theory of real-analytic varieties (c.f. \cite[Section 6.3]{krantz2002primer}) that the zero set of a non-constant real-analytic function on an open, connected domain is contained in a union of positive codimension real-analytic submanifolds of the domain. It follows that $\{ \Phi \neq 0\}$ is a generic subset of each open connected component $U$ of $\mathcal{H}_m \setminus \{ \Disc = 0\}$, so long as $\Phi|_U$ is non-constant (e.g., not identically equal to zero). 

We will prove something stronger, that $\{ \Phi \neq 0\}$ is dense in $\mathcal{H}_m \setminus \{ \Disc = 0\}$, from which the above argument will imply $\{ \Phi \neq 0\}$ is a generic subset of $\Hc_m$. Precisely: 

	\begin{lemma}\label{lem:pertStepHollow}
		Let $m \geq 3$. Let $A \in \Hc_m$ and assume  $A_\sym$ is invertible. Then, for any $\epsilon > 0$ there exists $\hat A \in \Hc_m$ with $\| A - \hat A\| < \epsilon$ such that $\sigma(\hat A) \cap i \R = \emptyset$. 
	\end{lemma}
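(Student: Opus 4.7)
The plan is to show that the set $\mathcal Z := \{A \in \Hc_m : \sigma(A) \cap i\R \neq \emptyset\}$ is contained in a proper algebraic subvariety of $\Hc_m$. The complement is then automatically open, dense, and of full Lebesgue measure, so any $\hat A$ in a small ball around $A$ but outside this subvariety will work. The $A_\sym$-invertibility hypothesis plays no essential role in this density argument; it is naturally available since $\{A \in \Hc_m : A_\sym \text{ is invertible}\}$ is itself dense in $\Hc_m$, and the authors only need the density statement on that open dense set.

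First, I would algebraicize the imaginary-eigenvalue condition. Writing $p_A(t) = \det(tI - A)$ and expanding $p_A(it) = q_A(t) + i r_A(t)$, the polynomials $q_A, r_A \in \R[t]$ have coefficients that are polynomials in the entries of $A$, and for real $\omega$ one has $p_A(i\omega) = 0$ iff $q_A(\omega) = r_A(\omega) = 0$. Thus existence of an imaginary eigenvalue forces $q_A$ and $r_A$ to share a root, so in particular the Sylvester resultant $R(A) := \mathrm{Res}_t(q_A, r_A)$ vanishes. This packages the obstruction into a single polynomial $R$ in the entries of $A$, and $\mathcal Z \subseteq \mathcal V := \{A \in \Hc_m : R(A) = 0\}$.

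The main step, and the one I expect to be the key obstacle, is to exhibit a single $A_0 \in \Hc_m$ with $R(A_0) \neq 0$. Since $q_A, r_A$ are real polynomials, a common complex root $\omega = a+bi$ comes paired with $\bar\omega$, so $A$ has both $i\omega = -b+ai$ and $i\bar\omega = b+ai$ as eigenvalues, summing to the purely imaginary number $2ai$. Conversely, if no pair of eigenvalues of $A_0$ sums to a purely imaginary number, then $R(A_0) \neq 0$. I would therefore construct $A_0$ with distinct nonzero real eigenvalues, no two summing to zero: take $\lambda_i = i$ for $1 \leq i \leq m-1$ and $\lambda_m = -m(m-1)/2$, valid for $m \geq 3$ since $m(m-1)/2 \notin \{1,\ldots,m-1\}$. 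Then $A_0$ can be taken as the companion matrix of $\prod_i(t-\lambda_i)$, which is hollow precisely because $\sum_i \lambda_i = 0$ forces the trace to vanish.

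Having constructed the witness, $R$ is a nonzero polynomial on the finite-dimensional real vector space $\Hc_m$, so $\mathcal V$ is a closed proper subvariety with empty interior. The affine segment $s \mapsto A_s := (1-s)A + sA_0$ stays in $\Hc_m$ because $\Hc_m$ is a linear subspace, and $s \mapsto R(A_s)$ is a univariate polynomial that is nonzero at $s = 1$, hence has only finitely many zeros in $[0,1]$. Choosing $s > 0$ small enough that $s\|A - A_0\| < \eps$ and $R(A_s) \neq 0$ yields $\hat A := A_s$ with $\|\hat A - A\| < \eps$ and $\sigma(\hat A) \cap i\R = \emptyset$, as required.
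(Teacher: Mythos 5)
Your proof is correct, and it takes a genuinely different route from the paper's. The paper perturbs $A$ in the larger space $\mathfrak{sl}_m(\R)$ of traceless matrices to kill the imaginary spectrum and then uses an orthogonal-conjugation lemma (``snap to hollow'', proved by induction on dimension) to bring the perturbed matrix back into $\Hc_m$; the invertibility of $A_\sym$ is a hypothesis of that snapping lemma. You instead stay inside $\Hc_m$ the whole time: you package the obstruction into a single polynomial $R$ (the resultant of the real and imaginary parts of $p_A(it)$, one of which always has $\pm1$ leading coefficient, so $R$ is a well-defined polynomial in the entries of $A$), then exhibit a single hollow witness $A_0$ (a companion matrix, hollow because its eigenvalues $1,\dots,m-1,-m(m-1)/2$ sum to zero, and with $R(A_0)\neq 0$ because no two of its eigenvalues sum to a purely imaginary number). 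This shows $\{R=0\}$ is a proper algebraic subvariety of $\Hc_m$, and the density statement follows. What each buys: your argument is more elementary and self-contained, discards the $A_\sym$ hypothesis entirely, and in fact directly yields the stronger statement that $\{\sigma(A)\cap i\R\neq\emptyset\}$ lies in a proper subvariety of $\Hc_m$ --- essentially proving Lemma 3.6 in one step, rather than via the discriminant-plus-real-analytic-set argument the paper uses. The paper's route is heavier but isolates the orthogonal-similarity ``snap'' as a reusable geometric tool. One small cosmetic point: when you write that ``no pair of eigenvalues sums to a purely imaginary number,'' it's worth making explicit (as your construction correctly ensures) that this must include pairs with repetition, since a common real root $\omega$ of $q_A,r_A$ gives a single purely imaginary eigenvalue $i\omega$, corresponding to the ``pair'' $i\omega+i\omega=2i\omega$.
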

	Here, $A_\sym := \frac12(A + A^\top)$ is the symmetrization of $A$. Invertibility of $A_\sym$ is an open, dense and full Lebesgue-measure condition on $\Hc_m$, and so this assumption does not pose a problem for the proof of Lemma \ref{lem:genericHollow}. 

The following is the main technical step in the proof of Lemma \ref{lem:pertStepHollow}. Its proof is deferred to the end of Section \ref{subsec:genercHyperbolicity3}. 

\begin{lemma}\label{lem:snapToHollow}
	For any $\eps, M > 0$ and $m \geq 2$ there exists $\delta > 0$ with the following properties. Let $A \in \mathfrak{sl}_m(\R)$ be such that 
	\begin{itemize}
		\item[(1)] $|v \cdot A v| < \delta$  for each  $i \in \set{1, \dots, m}$, and 
		\item[(2)] $\| A \|, \| A_\sym^{-1}\| \leq M$. 
	\end{itemize}
	Then, there exists an $m \times m$ orthogonal matrix $U$ such that $\hat A := U^\top A U \in \Hc_m$ and $\| \hat A - A\| < \epsilon$. 
\end{lemma}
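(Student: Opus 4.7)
The plan is to parametrize nearby orthogonal matrices as $U = e^{S}$ with $S \in \mathfrak{o}(m)$ small and solve for $S$ such that $F(S) := \operatorname{diag}(e^{-S} A\, e^{S}) = 0$. Since $e^{-S}(A - A_\sym)\, e^{S}$ remains skew-symmetric and hence has zero diagonal, only the symmetric part of $A$ contributes to $F$; moreover $\|\hat A - A\| \le 4M\|U - I\|$ using $\|A_\sym\|, \|A - A_\sym\| \le \|A\| \le M$. It therefore suffices to find $S \in \mathfrak{o}(m)$ small with $F(S) := \operatorname{diag}(e^{-S} A_\sym e^{S}) = 0$, noting that $\|F(0)\| \le \sqrt{m}\,\delta$ by hypothesis~(1).

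The linearization $L := dF|_0: \mathfrak{o}(m) \to \R^m_{\operatorname{tr}=0}$ is $L(S) = \operatorname{diag}([A_\sym, S])$, and an integration by parts on the trace identifies its adjoint (up to sign) as $L^*(D) = [D, A_\sym]$ for a traceless diagonal $D = \operatorname{diag}(d_1, \dots, d_m)$. The cokernel $\mathfrak{C} := \ker L^*$ therefore consists of those traceless diagonals satisfying $(d_i - d_j)(A_\sym)_{ij} = 0$ for all $i \ne j$; equivalently, $\mathfrak{C}$ is nontrivial exactly when $A_\sym$ decomposes into at least two nontrivial blocks after some reordering of the coordinate basis.

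I would then split into two cases. If $\mathfrak{C} = \{0\}$, then $L$ is surjective and, by compactness of the set of admissible $A_\sym$ with trivial cokernel, there is a uniform bound $\|L^{+}\| \le C_1(M)$ on its right inverse; a standard contraction-mapping version of the implicit function theorem then yields $F(S) = 0$ with $\|S\| \lesssim_{M} \delta$, giving $\|\hat A - A\| < \eps$ for $\delta$ small. If $\mathfrak{C} \ne \{0\}$, I would apply a Lyapunov--Schmidt reduction: decompose $S = S_1 + S_2$ with $S_2 \in \ker L$ and $S_1 \in (\ker L)^\perp$; solve the $\operatorname{Ran}(L)$-projection of $F(S) = 0$ for $S_1 = S_1(S_2)$ by IFT (with $\|S_1\| \lesssim_{M} \delta + \|S_2\|^2$); and project the remaining equation onto $\mathfrak{C}$ to obtain the leading-order quadratic equation
\[
Q(S_2) := \tfrac{1}{2}\, \pi_{\mathfrak{C}} \operatorname{diag}([[A_\sym, S_2], S_2]) = -\pi_{\mathfrak{C}} \operatorname{diag}(A_\sym) + O(\|S_2\|^3).
\]
The essential algebraic input is that $Q : \ker L \to \mathfrak{C}$ is \emph{quantitatively surjective}: for each unit vector $v \in \mathfrak{C}$ there exists $S_2 \in \ker L$ with $\|S_2\| \le C_2(M)$ and $Q(S_2) = v$. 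Granting this, one solves $Q(S_2) = O(\delta)$ for $S_2$ with $\|S_2\| \lesssim_{M} \sqrt{\delta}$, and combining with the bound on $S_1$ yields $\|S\|, \|U - I\| \lesssim_{M} \sqrt{\delta}$, hence $\|\hat A - A\| \lesssim_{M} \sqrt{\delta} < \eps$ for $\delta$ sufficiently small.

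The main obstacle is this quantitative nondegeneracy of $Q$. Geometrically, $\mathfrak{C}$ is spanned by traceless diagonals that are constant on each block of $A_\sym$, while $\ker L$ consists of skew perturbations whose nonzero entries lie strictly \emph{between} the blocks; the quadratic map $Q$ then captures the second-order transfer of trace between blocks effected by block-mixing rotations. A direct commutator computation, essentially a block-wise version of the $m = 2$ Givens-rotation formula, expresses this trace-transfer coefficient as a product of factors involving the operator norms of the individual blocks of $A_\sym$, each bounded below by $1/M$ thanks to the invertibility hypothesis $\|A_\sym^{-1}\| \le M$; combined with compactness of the admissible parameter space, this yields the uniform lower bound on $Q$ required to close the argument.
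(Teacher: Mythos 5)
Your approach (linearize $F(S)=\operatorname{diag}(e^{-S}A_\sym e^S)$ around $S=0$ and solve by implicit function theorem or Lyapunov--Schmidt) is genuinely different from the paper's, which proceeds by induction on $m$: at each step, Claim~\ref{cla:snapStep} produces a single vector $p$ close to $e_1$ with $p\cdot Ap=0$ via an explicit quadratic formula, by choosing the perturbation direction $w$ to be an argmax of $w\mapsto |w\cdot A_\sym v|$. That choice forces $|v\cdot A_\sym w|=\|A_\sym v\|\ge M^{-1}$, which is exactly the quantitative nondegeneracy you are struggling to establish; then an orthogonal similarity zeroes the $(1,1)$ entry, and one passes to the lower $(m-1)\times(m-1)$ block, which inherits the trace-zero and norm hypotheses. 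The paper's construction is entirely explicit, produces uniform bounds for free, and never needs to distinguish a degenerate from a nondegenerate case.

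The gap in your proposal is in how the two cases interact. The uniform bound $\|L^{+}\|\le C_1(M)$ does not follow from compactness: the set of $A_\sym$ with $\|A_\sym\|,\|A_\sym^{-1}\|\le M$ and $\mathfrak{C}=\{0\}$ is not compact (it is an open set in the ball), and $\|L^{+}\|$ genuinely blows up as an off-diagonal entry $(A_\sym)_{ij}\to 0$, which is precisely the boundary where $\mathfrak{C}$ jumps to become nontrivial. Thus you cannot obtain a uniform right inverse in the ``surjective'' case, nor a stable Lyapunov--Schmidt splitting in the ``degenerate'' case: the decomposition $\mathfrak{o}(m)=\ker L\oplus(\ker L)^\perp$ and $\R^m_{\mathrm{tr}=0}=\operatorname{Ran}L\oplus\mathfrak{C}$ changes dimension discontinuously in $A_\sym$, and the constants in your IFT solve for $S_1(S_2)$ degenerate as $A_\sym$ approaches the boundary between strata. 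On top of this, the ``quantitative surjectivity'' of the quadratic map $Q$ is asserted rather than proved; even granting a lower bound on its coefficients, a quadratic map between spaces whose dimensions vary with the block structure of $A_\sym$ needs a genuine argument to hit a prescribed direction in $\mathfrak{C}$ with controlled preimage size. These issues would need to be resolved before the proposal constitutes a proof; the paper's one-diagonal-entry-at-a-time inductive scheme sidesteps all of them.
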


\begin{proof}[Proof of Lemma \ref{lem:pertStepHollow} assuming Lemma \ref{lem:snapToHollow}]
	Let $A \in \Hc_m$ with $A_{\sym}$ invertible and let $\eps > 0$. If $\sigma(A) \cap i \R = \emptyset$ then there is nothing to do. Otherwise, fix $\delta > 0$ as in Lemma \ref{lem:snapToHollow} corresponding to $\epsilon / 2$ and $2M$ 
	where $M = \max\set{\| A\|, \| A_\sym^{-1}\| }$. Without loss, we may assume $\delta < \min\set{\eps / 2, M / 100}$. 
	
	Let now $A' \in \mathfrak{sl}_m(\R)$ be such that $\sigma(A') \cap i \R = \emptyset$ and $\| A - A'\| < \delta$. That such a perturbation $A'$ exists within $\mathfrak{sl}_m(\R)$ is straightforward\footnote{Note that here we use the fact that $m \geq 3$. If $m =2 $ the result is false, since the set of matrices in $\mathfrak{sl}_2(\R)$ with complex-conjugate eigenvalues is open.} to check from, e.g., the Jordan canonical form of $A$.  It is straightforward to check that $\| A'\| \leq M + \delta < 2 M$ and $m( A_\sym) \geq M^{-1} - \delta > (2 M)^{-1}$. Applying Lemma \ref{lem:snapToHollow}, there exists $\hat A \in \Hc_m$ orthogonally similar to $A'$ with $\| \hat A - A'\| < \epsilon / 2$, hence $\sigma(\hat A) \cap i \R = \emptyset$. Finally, we estimate 
	\[\| A - \hat A\| \leq \| A - A'\| + \| A' - \hat A\| \leq \delta + \frac{\eps}{2} < \eps \]
	as desired. 
\end{proof}

It remains to prove Lemma \ref{lem:snapToHollow}. The proof below is inspired by the methods of \cite{fillmore1969similarity}. We begin with the following Claim. 

\begin{claim}\label{cla:snapStep}
	For any $\eps, M > 0$ and $m \geq 1$ there exists $\delta > 0$ with the following property. Let $A \in \mathfrak{sl}_m(\R)$ be such that (1) $|v \cdot A v| < \delta$ and (2) $\| A \|, \| A_\sym^{-1}\| \leq M$. Then, there exists a vector $p \in \R^d$ with 
	\[\| p - v \| < \eps \qquad \text{ and } \qquad p \cdot A p = 0 \,. \]
\end{claim}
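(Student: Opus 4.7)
The plan is to produce $p$ along a one-parameter family $p(t) = v + tw$ for a judiciously chosen direction $w \in \R^m$, thereby reducing the problem to finding a small root of the scalar quadratic $f(t) := p(t) \cdot A p(t)$. Implicit throughout is the normalization $|v| = 1$, which is the case arising from Lemma \ref{lem:snapToHollow}; the general case is handled by a routine rescaling.

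The right choice of direction is $w := A_\sym v$. Expanding $f$ and using $v \cdot Aw + w \cdot Av = 2\, v \cdot A_\sym w$ yields
\[ f(t) = v \cdot Av + 2t\, \|A_\sym v\|^2 + t^2 (w \cdot Aw). \]
Hypothesis (1) controls the constant term by $\delta$, and hypothesis (2) provides bounds of the correct sign and magnitude on the other two coefficients. Specifically, $\|A_\sym^{-1}\| \leq M$ forces $\|A_\sym v\|^2 \geq 1/M^2$, giving a uniform lower bound on the linear coefficient, while $\|A\| \leq M$ and $\|w\| \leq M$ give $|w \cdot Aw| \leq M^3$, controlling the quadratic term.

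From here the plan is a one-line Intermediate Value Theorem. Evaluating $f$ at $t = \pm \delta M^2$, the linear term contributes $\pm 2\delta$, while the constant and quadratic corrections are at most $\delta + \delta^2 M^7$ in absolute value; thus for $\delta < 1/(2M^7)$ the signs of $f(\pm \delta M^2)$ are strict and opposite, producing a root $t^* \in (-\delta M^2, \delta M^2)$. The resulting $p := v + t^* w$ satisfies $\|p - v\| = |t^*|\, \|w\| \leq \delta M^3$, so the choice $\delta < \min(\epsilon/M^3,\, 1/(2M^7))$ finishes the argument.

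No serious obstacle arises, but one comment on the design is worthwhile: the naive guess $w = v$ fails outright, because $v \cdot A_\sym v = v \cdot Av$ inherits smallness from hypothesis (1) and the resulting $f$ factors as $(1+t)^2\, v \cdot Av$, whose only root is $t = -1$. The switch $w = A_\sym v$ is precisely what uses the invertibility of $A_\sym$ to circumvent this degeneracy and produce a linear coefficient that is bounded below independently of $\delta$.
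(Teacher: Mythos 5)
Your proof is correct and follows essentially the same plan as the paper's: perturb along the direction $A_\sym v$ (the paper chooses $w$ as the normalized argmax of $|w \cdot A_\sym v|$, which is the same direction) and solve the resulting scalar quadratic in the step size, with hypothesis (2) keeping the linear coefficient bounded below and the quadratic coefficient bounded above. The one small improvement in your version is using the Intermediate Value Theorem instead of the explicit quadratic formula, which lets you skip the separate treatment the paper gives to the degenerate case $w \cdot A_\sym w = 0$.
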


\begin{proof}[Proof of claim]
	Let us write $p = v + \eta w$ where $\eta \in \R$ and $w \in \R^m, |w| = 1$ are to be specified. The relation $p \cdot A p = 0$ simplifies to $p \cdot A_\sym p = 0$, which expands as 
	\begin{align} \label{eq:solveEtaQuadratic} v \cdot A_\sym v + 2 \eta ( v \cdot A_\sym w) + \eta^2 (w \cdot A_\sym w) = 0 \,; \end{align}
	of interest for us will be the solution
	\begin{align}\label{eq:defineEta}\eta & = \frac{-  (v \cdot A_\sym w) + \sqrt{(v \cdot A_\sym w)^2 - (w \cdot A_\sym w )(v \cdot A_\sym v)}}{ (w \cdot A_\sym w)} 
\end{align}
Choose $w$ to be an argmax of $w \mapsto |w \cdot A_\sym v|$ over the unit sphere $\set{\| w\| = 1}$ in $\R^m$, noting that by construction \[|v \cdot A_\sym w| = |w \cdot A_\sym v| = \| A_\sym v\| \geq \|A_\sym^{-1}\|^{-1} \geq M^{-1} \,.  \]  

Assume now that $|v \cdot A_\sym v| < \delta$, where $\delta$ will be taken smaller as we progress. For now, let us also assume that for this choice of $w$, one has $w \cdot A_\sym w \neq 0$, hence the RHS of \eqref{eq:defineEta} is defined. Then, 
\[\eta = \frac{v \cdot A_\sym w}{w \cdot A_\sym w} \left(-1 + \sqrt{1 - \frac{(v \cdot A_\sym v) (w \cdot A_\sym w)}{(v \cdot A_\sym w)^2}}\right) \, .
\]
We estimate 
\[\left|\frac{(v \cdot A_\sym v) (w \cdot A_\sym w)}{(v \cdot A_\sym w)^2}\right| \leq \frac{\delta M}{M^{-2}} = M^3 \delta \,. \] 
If $\delta \ll M^{-3}$, then the above LHS is $< 1$. Using that $\abs{-1 + \sqrt{1 - \alpha}} \leq |\alpha|$ for all $\alpha \in [-1,1]$, it follows that
\begin{align*} |\eta| & \leq \left| \frac{v \cdot A_\sym w}{w \cdot A_\sym w} \right| \cdot  \left| \frac{(v \cdot A_\sym v) (w \cdot A_\sym w)}{(v \cdot A_\sym w)^2} \right| \\ 
	& = \left| \frac{v \cdot A_\sym v}{v \cdot A_\sym w} \right| \leq M^{-1} \delta < \eps \end{align*}
where in the last step $\delta$ is taken yet smaller. 

In the case when $w \cdot A_\sym w = 0$, we have directly from \eqref{eq:solveEtaQuadratic} the simpler estimate
\[|\eta| = \frac12 \left|\frac{v \cdot A_\sym v}{v \cdot A_\sym w}\right| \leq \frac12 \delta M \, , \]
the RHS of which can be made $< \eps$ on taking $\delta < \eps M^{-1}$. 
\end{proof}
\begin{proof}[Proof of Lemma \ref{lem:snapToHollow}]
	The proof is induction in the dimension $m$. For the base case $m = 2$, let $A \in \mathfrak{sl}_2(\R)$, $M > 0$ and $\eps > 0$ be fixed as in the hypotheses of Lemma \ref{lem:snapToHollow}. Let $\delta > 0$ be as in Claim \ref{cla:snapStep} corresponding to this value of $M$ and to $\eps / 10$; from the assumption $|A e_1 \cdot e_1| < \delta$ let $p$ be such that $A p \cdot p = 0$ and $\| p - e_1 \| < \eps/10$ as in Claim \ref{cla:snapStep}. Fix an orthogonal matrix $U$ so that $U e_1$ is parallel to $p$ and define $\hat A = U^\top A U$, from which it follows that $e_1 \cdot \hat A e_1 = 0$. Note that since $\hat A \in \mathfrak{sl}_2(\R)$, $\hat A$ has trace zero and so it follows automatically that $e_2 \cdot\hat A e_2 = 0$, hence $\hat A \in \Hc_2$. The estimate $\| A - \hat A\| < \eps$ is now straightforward. 

	For the induction step, let $m > 2$ be fixed and assume the conclusions of Lemma \ref{lem:snapToHollow} hold for matrices in $\mathfrak{sl}_{m-1}(\R)$ with $\delta_{m-1} = \delta_{m-1}(\eps, M)$. 
	With $\eps, M > 0$ fixed, let $A \in \mathfrak{sl}_m(\R)$ satisfy assumptions (1) -- (3) as in the hypotheses of Lemma \ref{lem:snapToHollow} for some value of $\delta = \delta_m > 0$, to be taken smaller as we progress. 

	Taking $\delta_m$ small enough, depending only on $M$ and $\eps$, we can apply Claim \ref{cla:snapStep} to $v = e_1$ to yield $p \in \R^m$ for which $\| p - v \| < \eps / 20$ and $p \cdot A p = 0$. Fix an orthogonal matrix $U_1$ with $\| U_1 - \Id \| < \eps / 10$ such that $U_1 e_1$ is parallel to $p$ and form $\hat A' = U_1^\top A U_1$, noting that $\| \hat A' - A\| < \frac{\eps}{5}$ and that $\hat A'$ admits the block diagonal form 
	\[\hat A' = \begin{pmatrix}
		0 & \ast \\ \ast & A_{m-1}
	\end{pmatrix}\]
	where $A_{m-1} \in \mathfrak{sl}_{m-1}(\R)$. Here $\ast$ stands for $(m-1) \times 1$ and $1 \times (m-1)$ sub-blocks in $\hat A'$ which will not matter in the coming argument. Note that $A_{m-1}$ satisfies the estimates
	$\| A_{m-1} \| \leq \| \hat A'\| = \| A\| \leq M$ and\footnote{To see this, observe that \[\begin{pmatrix} 0 \\ v \end{pmatrix} \cdot \hat A' \begin{pmatrix} 0 \\ v \end{pmatrix} = v \cdot A_{m-1} v = v \cdot (A_{m-1})_\sym v  \] for all $v \in \R^{m-1}$.} $\|(A_{m-1})_\sym^{-1}\| \leq M$. Apply the induction hypothesis to obtain $\delta_{m-1} = \delta_{m-1}(\eps / 5, M)$, yielding $U_{m-1}$ orthogonal for which $\hat A_{m-1} = U_{m-1}^\top A_{m-1} U_{m-1}$ is hollow and $\| U_{m-1} - \Id\| < \eps / 5$. Finally, set 
	\[U = U_1 \begin{pmatrix}
		1 & 0 \\ 0 & U_{m-1} 
	\end{pmatrix}\] 
	so that 
	\[\hat A = U^\top A U = \begin{pmatrix}
		1 & 0 \\ 0 & U_{m-1}^\top  
	\end{pmatrix} \hat A' \begin{pmatrix}
		1 & 0 \\ 0 & U_{m-1} 
	\end{pmatrix} = \begin{pmatrix}
		0 & \ast \\ \ast & U_{m-1}^\top A_{m-1} U_{m-1}
	\end{pmatrix}\]
	is hollow and satisfies $\| \hat A - A \| < \eps$, as desired. 
\end{proof}

\subsection{Generic hypoellipticity}\label{subsec:genericHypoellipticity}

Next, we turn to the second generic dynamical condition: a generic parabolic H\"ormander condition. It suffices to prove the following. 

\begin{proposition}\label{prop:genHypoelliptic2}
    For a generic set of $b \in \Bc$, it holds that for any distinct $i,j \in \{ 1, \dots, d\}$, for any $d \times d$ matrix $M$, and for any $\eps \in \R$, one has 
    \begin{align} \label{eq:LieAlgsuff3}\operatorname{Lie} \{ e_i, e_j, [B_b + \eps Mx, e_i], [B_b + \eps Mx,e_j] \} = \R^d\end{align}
    at all $x \in \R^d$.
\end{proposition}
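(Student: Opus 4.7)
The plan is to reduce the bracket-generating condition to a purely algebraic condition on $b$ that is independent of $\eps$, $M$, and $x$, then verify its genericity using the Zariski-openness of polynomial rank conditions.

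\textbf{Step 1 (Reduction).} Set $X_0 := B_b(x,x) + \eps M x$ and $T_k := -2 B_b(e_k,\cdot) \in \mathfrak{gl}(d)$. A direct computation gives $[X_0, e_k] = T_k x - \eps M e_k$, so all four generators of
$\mathfrak{h} := \mathrm{Lie}\{e_i, e_j, [X_0,e_i],[X_0,e_j]\}$
are affine vector fields, and $\mathfrak{h}$ embeds as a Lie subalgebra of $\mathfrak{gl}(d) \ltimes \R^d$ under the bracket $[(A,a),(B,b)] = (BA - AB,\,Ba - Ab)$. Since $(0,e_i),(0,e_j) \in \mathfrak{h}$ and $[(0,e_k),(A,a)] = (0, A e_k)$, an easy induction shows that $\mathfrak{h}$ contains $(0, A e_k)$ for every $A$ in the Lie subalgebra $\mathfrak{g}_L := \mathrm{Lie}\{T_i, T_j\} \subseteq \mathfrak{gl}(d)$ and every $k \in \{i,j\}$. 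Since constant vector fields evaluate to themselves, we obtain $\mathfrak{h}(x) \supseteq \mathfrak{g}_L \cdot \{e_i, e_j\}$ for every $x$, reducing the proposition to the $x$-, $\eps$-, and $M$-independent algebraic claim:
\[
  \mathfrak{g}_L \cdot \{e_i, e_j\} := \Span\{A e_i, A e_j : A \in \mathfrak{g}_L\} = \R^d \quad \text{for all distinct } i,j.
\]

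\textbf{Step 2 (Genericity).} Each element of $\mathfrak{g}_L \cdot \{e_i, e_j\}$ is a polynomial function of the entries of $b$, since it arises from iterated Lie brackets of $T_i, T_j$ applied to $e_i$ or $e_j$. Hence for each fixed pair $(i,j)$, the claim is equivalent to the non-vanishing of some maximal minor of a matrix of polynomial expressions in $b$, which is a Zariski-open condition on $\Bc$. Intersecting over the finitely many pairs $(i,j)$ preserves openness. It remains to establish non-emptiness by producing a single $b_0 \in \Bc$ for which the claim holds; this can be done by direct computation on a concrete coefficient tensor, e.g.\ a Galerkin truncation of 2D Euler on $\T^2$ or the Lorenz-96 nonlinearity, where one verifies by hand that iterated actions of $T_i, T_j$ on $\{e_i, e_j\}$ exhaust $\R^d$.

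\textbf{Main obstacle.} The key technical step is the explicit construction in Step 2. By Proposition~\ref{prop:constraintClass2}, the constraints in $\Bc$ force each matrix $T_k$ to be hollow and to have vanishing $k$-th column (indeed $(T_k)_{ii} = -2 b^i_{ki} = 0$ follows from combining $b^k_{ii}=0$ with the Jacobi identity, and $(T_k)_{ik} = -2 b^i_{kk} = 0$ from $b^i_{kk} = 0$), restricting the pair $(T_i, T_j)$ to a proper linear subspace of $\mathfrak{gl}(d) \times \mathfrak{gl}(d)$ cut out by these relations together with the Jacobi-type identity. Verifying that some admissible $b_0$ yields a pair $(T_i, T_j)$ whose Lie-subalgebra action on $\{e_i, e_j\}$ is transitive on $\R^d$ thus requires exploiting the cubically-growing dimension of $\Bc$ and the structure of a specific conservative nonlinearity. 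Once this is done, Zariski-openness yields the desired open, dense, full Lebesgue-measure genericity.
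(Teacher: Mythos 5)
The overall strategy (reduce to a polynomial rank/determinant condition and invoke Zariski-openness, then exhibit a single witness $b_0 \in \Bc$) is the same one the paper uses. However, there is a genuine gap: the witness construction, which you yourself flag as ``the key technical step,'' is not carried out, and this is precisely where all the work of the paper's proof lies. The paper does not appeal to a known model: it builds $b_*$ inductively via the specific bracket chain
\[
v_3 = [v_2,[v_1, B_b + \eps Mx]], \quad v_{i+2} = [v_{i+1},[v_i, B_b + \eps Mx]] - \sum_{j=1}^{i-1} b^j_{i,i+1} v_j,
\]
and then chooses the free coefficients of $b_*$ (organized according to the partition of triples $S_3, S_4, \ldots$, compatibly with the Jacobi identity of Proposition~\ref{prop:constraintClass2}) so that $v_k(b_*) = e_k$ for all $k$, making $G(b_*) = \det(v_1,\ldots,v_d) = 1$. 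Asserting that a Galerkin--Euler or Lorenz--96 tensor satisfies your algebraic condition --- for \emph{every} pair $(i,j)$ --- is not obvious and is not checked; for sparse nonlinearities like Lorenz--96 the generated algebra can easily be too small for many $(i,j)$ pairs, so the assertion is questionable as stated.

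There is also a suboptimality in your Step 1 that may compound the difficulty. You extract from $\mathfrak{h}$ only the constants $(0, Ae_k)$ with $A \in \mathfrak{g}_L := \mathrm{Lie}\{T_i, T_j\}$, and reduce to $\mathfrak{g}_L \cdot \{e_i, e_j\} = \R^d$. But since the bracket $[(0,c),(A,a)] = (0, Ac)$ can be iterated against each new constant, $\mathfrak{h}$ actually contains $(0, A_1 A_2 \cdots A_m e_k)$ for arbitrary words $A_\ell \in \{T_i, T_j\}$, i.e.\ the constants produced are governed by the \emph{associative} algebra generated by $T_i, T_j$, not merely its Lie subalgebra. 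The paper's chain of brackets exploits exactly this: e.g.\ $v_4$ involves $T_2 T_1 e_2$, a product rather than a Lie bracket of the $T$'s. By restricting to $\mathfrak{g}_L$, you are imposing a strictly stronger sufficient condition, and combined with the degeneracy of $\Bc$ (each $T_k$ is hollow with a vanishing $k$-th column, so $T_i e_i = T_j e_j = 0$ and $T_i e_j$, $T_j e_i$ are proportional), it is not clear that your stronger condition is even generically satisfiable. So either you should work with the associative algebra and then still produce a concrete $b_0$, or you should justify why the Lie-algebra version suffices and then construct the witness. Either way, the witness construction cannot be waved away.
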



\begin{proof}[Proof of Proposition \ref{prop:genHypoelliptic2}]
 In what follows, we obtain a generic set of $b \in \Bc$ for which \eqref{eq:LieAlgsuff3} holds for $i = 1, j = 2$ and for all $\eps$ and $M$. The proof for general $i,j$ is identical up to relabeling of coordinates. Since there are only finitely many possible combinations of $i, j$, it follows on taking a finite intersection that for generic $b \in \Bc$, all pairs of $i,j$ satisfy \eqref{eq:LieAlgsuff3} simultaneously. 

Below we will define inductively a set of vectors $v_1, v_2, \dots, v_d, v_i = v_i(b)$, with entries given by polynomials in the coefficients of $b \in \Bc$, for which 
\begin{align}\label{eq:LieAlgMembership}\{ v_1, \dots, v_d\} \subset \operatorname{Lie}\{ e_i, e_j, [B_b + \eps Mx, e_i], [B_b + \eps Mx,e_j] \}\end{align}
holds at all $x \in \R^d$, independently of $\eps \in \R$ or the matrix $M$. 
These vectors will be constructed in such a way so as to guarantee the existence of some $b_* \in \Bc$ for which 
\[v_i(b_*) = e_i \qquad \text{ for all } \quad 1 \leq i \leq d \, ,\]
from which it will follow that the polynomial 
\[G(b) = \det \begin{pmatrix}
    | &  & | \\ 
    v_1(b) & \cdots & v_d(b) \\ 
    | &  & | 
\end{pmatrix}\]
satisfies $G(b_*) = 1$. It follows that $\{ G = 0\} \subset \Bc$ is a proper algebraic subvariety, hence its complement has full Lebesgue measure, and Proposition \ref{prop:genHypoelliptic2} follows. 

The $v_i$ are defined inductively as follows: 

\begin{align*}
    v_1 & = e_1 \\ 
    v_2 & = e_2 \\ 
    v_3 & = [v_2, [v_1, B_b  + \eps Mx]] \\ 
    v_4 & = [v_3, [v_2, B_b  + \eps Mx]] - b^1_{23} v_1 \\ 
    v_5 & = [v_4, [v_3, B_b  + \eps Mx]] - b^1_{34} v_1 - b^2_{34} v_2 \\ 
    \vdots & \\ 
    v_{i + 2} & = [v_{i+1}, [v_i, B_b  + \eps Mx]] - \sum_{j = 1}^{i-1} b^j_{i, i+1} v_j \, , \qquad i \leq d - 2 \,. 
\end{align*}
Observe that at each stage, it holds that $v_i$ depends only on $b$ and not on $\eps, M$ or $x$. This follows by induction and that if $v, w$ are any two constant vector fields in $\R^d$, then 
	\[[v,[w,B_b + \eps Mx]](x) = \begin{pmatrix}
		v^\top B_b^1 w \\ \vdots \\ v^\top B_b^d w
	\end{pmatrix} \,. \]
Since the $v_i(b)$'s are generated as linear combinations of $e_1, e_2$ and brackets thereof with $B$, the relation \eqref{eq:LieAlgMembership} is immediate. 

We now choose the coefficients $b^i_{jk}$ of $b_*$ as follows. To start we compute 
\begin{align*}
    v_3 = [v_2, [v_1, B_b]] = B_b(v_1, v_2) = \begin{pmatrix}
        0 \\ 
        0 \\ 
        b^3_{12} \\ 
        b^4_{12} \\ 
        \vdots \\ 
        b^d_{12} 
    \end{pmatrix} \,. 
\end{align*}
We will set $b^3_{12} = 1$ and $b^4_{12} = \dots = b^d_{12} = 0$, noting that so far we have specified only those coefficients $b^i_{jk}$ with indices drawn from \[S_3 = \{ \{1, 2, i\} : i \geq 3\} \,, \] 
c.f. Remark \ref{eq:specifyFromJacobiIdentity}. 
With this choice, we have $v_3(b_*) = e_3$ as desired. 

Inductively, assume $v_j(b_*) = e_j$ holds for all $j \leq i + 1$, where $i \leq d -2$, and that along the way we have specified coefficients $b^i_{jk}$ of $b_*$ belonging to $S_3 \cup \cdots \cup S_{i+1}$ where 
\[S_j = \{ (j-2, j-1, k) : j-1 < k \leq d \} \,. \]
We seek to show that under these conditions, one can specify coefficients with indices in $S_{i + 2}$, a set disjoint from $S_3 \cup \dots \cup S_{i+1}$, for which $v_{i + 2}(b_*) = e_{i+2}$. For this we compute 
\[B_b(e_i, e_{i + 1}) = \begin{pmatrix}
    b^1_{i, i+1} \\ \vdots \\ b^{i-1}_{i, i+1} \\ 0 \\ 0 \\ b^{i+2}_{i, i+1} \\ b^{i+3}_{i, i+1} \\ \vdots \\ b^d_{i, i+1} 
\end{pmatrix} \, , \quad \text{ hence } \quad v_{i+2} = \begin{pmatrix}
    0 \\ \vdots \\ 0 \\ 0 \\ 0 \\ b^{i+2}_{i, i+1} \\ b^{i+3}_{i, i+1} \\ \vdots \\ b^d_{i, i+1} 
\end{pmatrix} \,. \]
The coefficients appearing in $v_{i+2}$ belong to $S_{i+2}$, as promised, and shall be set so that $b^{i+2}_{i, i+1} = 1$ and $b^j_{i, i+1} = 0$ for all $i+2 < j \leq d$. With this assignment it holds that $v_{i+2}(b_*) = e_{i+2}$, completing the induction step. 
\end{proof}

\subsection{Passing through $\mathcal{K}$}\label{subsec:genericQuadPassthrough}

The last step is to check generic nonexistence of invariant sets in $\Kc$ for the deterministic flow $\dot x = B_b(x,x)$. 
\begin{proposition}\label{prop:genericPassthrough}
	For a generic subset of $b \in \Bc$, there exists $K \geq 1$ such that trajectories $x: \R \to \R^d$ of the deterministic flow $\dot x = B_b(x,x)$ have the property that 
	\begin{align}\label{eq:passthroughDerivFormulation}
		x(t) \in \Kc \setminus \Ec \quad \Rightarrow \quad \exists i \in \set{1, \dots, K} \text{ such that } \frac{d^i}{dt^i} x(t) \notin \Kc \,.
	\end{align}
	In particular, there are no invariant subsets of $\Kc \setminus \Ec$ for the deterministic flow $\dot x = B(x,x)$.  
\end{proposition}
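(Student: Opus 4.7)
The plan is to apply the parametric Transversality Theorem (Theorem~\ref{thm:transversality}) with $K=2$, to the ``$2$-jet'' map
\[
F : \mathring{\Kc} \times \Bc \to (\Kc^\perp)^2, \qquad F(x, b) = \bigl(\Pi_{\Kc}^\perp B_b(x, x),\ \Pi_{\Kc}^\perp B_b(x, B_b(x, x))\bigr),
\]
whose zeros are exactly the points $x \in \mathring{\Kc}$ for which the first two time derivatives of the trajectory $\dot x = B_b(x,x)$ both lie in $\Kc$. Once it is shown that $F$ is transverse to $\{0\}$, the parametric transversality theorem gives that for $b$ in a full Lebesgue-measure subset of $\Bc$, the fiber $F(\cdot, b)^{-1}(0)$ is a smooth manifold of dimension $\dim\mathring{\Kc} - 2\dim\Kc^\perp = 3J - 2d$, strictly negative under $J < 2d/3$. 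Thus the fiber is empty, and \eqref{eq:passthroughDerivFormulation} holds with $K=2$.

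The technical heart is the verification of transversality: at every zero $(x_0, b_0)$ of $F$, the $b$-derivative
\[
d_b F(x_0, b_0)[\delta b] = \bigl(\Pi_{\Kc}^\perp \delta b(x_0, x_0),\ 2\Pi_{\Kc}^\perp\bigl[\delta b(x_0, y_0) + B_{b_0}(x_0, \delta b(x_0, x_0))\bigr]\bigr)
\]
must surject onto $(\Kc^\perp)^2$, where $y_0 := B_{b_0}(x_0, x_0) \in \Kc$. The crucial structural observation is that by the energy conservation identity \eqref{eq:energyPres}, $y_0 \cdot x_0 = 0$, so $y_0$ and $x_0$ are linearly independent as soon as $y_0 \neq 0$. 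Using Proposition~\ref{prop:constraintClass2}, for each fixed $k > J$ the two image components in the $e_k$ direction depend only on the $\binom{J}{2}$-parameter family $\{\delta b^k_{ij} : i < j \leq J\}$ (every other coefficient of $\delta b$ is killed by a factor $(x_0)_l$ or $(y_0)_l$ with $l > J$), and this family may be chosen freely within $\Bc$ since the Jacobi identities couple it only to coefficients $\delta b^i_{jk}, \delta b^j_{ik}$ that act trivially on the pair $(x_0, y_0)$ for the same vanishing reason. The two linear functionals thereby obtained have coefficient vectors $\bigl((x_0)_i (x_0)_j\bigr)_{i<j\le J}$ and $\bigl((x_0)_i (y_0)_j + (x_0)_j (y_0)_i\bigr)_{i<j\le J}$, and a direct calculation shows these are linearly independent precisely when $y_0$ is not a scalar multiple of $x_0$ --- which holds automatically once $y_0 \neq 0$ by $y_0 \perp x_0$. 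Thus for $J \ge 3$ (so $\binom{J}{2} \ge 2$), any pair of targets in the $k$-th $(\Kc^\perp)^2$ coordinate may be realized independently across $k > J$, and the remaining $\Pi_{\Kc}^\perp B_{b_0}$-term in the second component is absorbed by a further adjustment of the $\Kc$-component of $\delta b(x_0, x_0)$ using the coefficient degrees of freedom left in $\Bc$.

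The main obstacle is the residual case $y_0 = 0$, corresponding to fixed points of $B_{b_0}$ in $\mathring{\Kc}$, where the argument above degenerates. I would handle this stratum by using the $x$-derivative of $F$ to supplement the deficient $b$-component: at a fixed point one computes
\[
d_x F(x_0,b_0)[\delta x] = 2\bigl(\Pi_{\Kc}^\perp B_{b_0}(x_0, \delta x),\ \Pi_{\Kc}^\perp B_{b_0}(x_0, B_{b_0}(x_0, \delta x))\bigr)
\]
for $\delta x \in \Kc$, which combined with the first component of $d_b F$ (which is surjective onto $\Kc^\perp$ by the same free-coefficient argument) suffices to cover $(\Kc^\perp)^2$ for generic $(x_0, b_0)$ in the fixed-point stratum after an additional dimension count. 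The small-$J$ edge case $J = 2$, in which $\binom{J}{2}=1$ is too small for the two-equation argument, is dispatched directly: one has $(\Pi_{\Kc}^\perp B_b(x,x))_k = 2 b^k_{12}(x_0)_1(x_0)_2$, which is nonzero for every $x_0 \in \mathring{\Kc}$ as soon as $b^k_{12} \neq 0$ for some $k > 2$ (an open, dense, full-measure condition on $\Bc$), so \eqref{eq:passthroughDerivFormulation} already holds with $K = 1$. Intersecting the full-measure sets obtained in each case yields the desired $\mathring{\Bc} \subset \Bc$.
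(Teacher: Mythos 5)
Your route is genuinely different from the paper's and, if repaired, actually delivers a stronger conclusion. The paper applies the Transversality Theorem to $\B^{(2)}(b,x) = (x, \dot x, \ddot x)$ with target $(\R^{\mathcal{S}})^3$ for each subset $\mathcal{S} \subset \{1,\ldots,J\}$, then iterates a support-growth argument (cf.\ the proof following Claim~\ref{cla:passThroughStep}), accepting that derivatives are ``expended'' climbing the nested $\Ec_q$ strata and producing an unspecified $K$. You instead take the map $F : \mathring{\Kc} \times \Bc \to (\Kc^\perp)^2$ directly and dimension-count $J - 2(d-J) = 3J - 2d < 0$, which (if transversality holds) gives $K = 2$ outright. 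The $b$-derivative surjectivity argument you sketch at points with $y_0 := B_{b_0}(x_0,x_0) \neq 0$ is sound once one notices that the coupling term $\Pi_{\Kc}^\perp B_{b_0}(x_0, \delta b(x_0,x_0))$ enters through a lower-triangular map $(w_1, w_2') \mapsto (w_1, w_2' + L w_1)$ and is therefore automatically absorbed. The linear-independence of the two coefficient vectors $\big((x_0)_i (x_0)_j\big)_{i<j}$ and $\big((x_0)_i (y_0)_j + (x_0)_j (y_0)_i\big)_{i<j}$ does hold whenever $y_0 \perp x_0$, $y_0 \neq 0$, $x_0 \notin \Ec$ (including the boundary case where $x_0$ has only two nonzero coordinates, where $y_0$ is forced into $\Span\{e_3,\dots,e_J\}$ so the supports become disjoint). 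That computation matches the paper's $\det D = 4(x^k)^2(\dot x^p x^m - \dot x^m x^p)$ criterion and is essentially the same mechanism in different clothing.

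The genuine gap is in your handling of the $y_0 = 0$ stratum. You propose to supplement the degenerate $b$-derivative with $d_x F$, but at a fixed point the total derivative factors entirely through the single vector $u := \delta b(x_0, x_0) + 2 B_{b_0}(x_0, \delta x)$: both components become $\big(\Pi_{\Kc}^\perp u,\ \Pi_{\Kc}^\perp B_{b_0}(x_0, u)\big)$, whose image is a subspace of $(\Kc^\perp)^2$ of dimension at most $d$. Since $d < 2(d - J)$ whenever $J < d/2$, this cannot surject in general, and the phrase ``for generic $(x_0, b_0)$ in the fixed-point stratum'' is not a valid use of the parametric Transversality Theorem: you need surjectivity at \emph{every} zero of $F$, whereas the theorem's genericity is only in the $b$ variable. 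The correct repair is exactly what the paper's Lemma~\ref{lem:equilibria} does: separately establish that for generic $b$ the vector field $B_b$ has no equilibria in $\mathring{\Kc}$ (a first-order transversality argument with a much easier dimension count), then apply your $F$-transversality only on the open subset $\{(x,b) : B_b(x,x) \neq 0\}$, and intersect the two full-measure sets. With that fix, your argument is correct and is arguably cleaner than the paper's iteration; it would be worth remarking explicitly that this yields $K = 2$, sharpening what the paper leaves implicit.
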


	

The argument we present is based on the following version of the Transversality Theorem (see, e.g., \cite[Section 2.3]{guillemin2010differential}). 

\begin{definition}
	Let $X, Y$ be smooth manifolds 
	and let $Z \subset Y$ be a smooth submanifold, all of which are assumed boundaryless. We say that a mapping $F : X \to Y$ is transversal to $Z$, written $F \pitchfork Z$, if for all  we have that 
	\begin{align}\label{eq:transDefn3}
		\text{for all } x \in F^{-1}(Z) \, , \quad  {\rm Image}(DF_x) + T_{F(x)} Z = T_{F(x)} Y \,. 
	\end{align}
	Here, we use the convention that $F \pitchfork Z$ if $F^{-1}(Z) = \emptyset$, in which case the relation \eqref{eq:transDefn3} is vacuously true. 
\end{definition}

\begin{theorem} \label{thm:transversality}
	Let $F : S \times X \to Y$ be smooth and let $Z \subset Y$ be a submanifold (all boundaryless). Suppose that $F \pitchfork Z$. Then for a.e. $s \in S$, the mapping $f_s := F(s, \cdot), f_s : X \to Y$ is transversal to $Z$. 
\end{theorem}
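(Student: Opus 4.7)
The plan is to apply Sard's theorem to a projection map naturally constructed from the transversality hypothesis. The core idea is that $W := F^{-1}(Z)$ is a smooth submanifold of $S \times X$, and the condition that $f_s \pitchfork Z$ at a point $x \in f_s^{-1}(Z)$ is equivalent to $s$ being a regular value of the projection $\pi : W \to S$, $(s,x) \mapsto s$. Sard's theorem then delivers the almost-everywhere statement.

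First I would invoke the preimage theorem to obtain that $W = F^{-1}(Z)$ is a smooth submanifold of $S \times X$ of codimension equal to $\mathrm{codim}(Z, Y)$. Locally near any $(s_0, x_0) \in W$, one chooses a submersion $\sigma$ defined on a neighborhood of $F(s_0, x_0)$ in $Y$ whose zero set is $Z$; the hypothesis $F \pitchfork Z$ is precisely what makes $\sigma \circ F$ a submersion at $(s_0, x_0)$, so $W = (\sigma \circ F)^{-1}(0)$ is cut out transversely. Assuming $S, X, Y$ are finite-dimensional (as the a.e.\ phrasing implicitly requires) and second countable, Sard's theorem applied to the smooth map $\pi : W \to S$ guarantees that its set of critical values has Lebesgue measure zero in $S$.

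The main nontrivial step is the linear-algebra equivalence: if $s \in S$ is a regular value of $\pi$, then $f_s \pitchfork Z$. Let $x \in f_s^{-1}(Z)$, set $y = F(s,x) \in Z$, and take an arbitrary $w \in T_y Y$. By $F \pitchfork Z$ at $(s,x)$, we may decompose $w = DF_{(s,x)}(u,v) + z$ for some $(u,v) \in T_s S \oplus T_x X$ and $z \in T_y Z$. Since $s$ is a regular value of $\pi|_W$, the differential $D\pi_{(s,x)} : T_{(s,x)} W \to T_s S$ is surjective, so there exists $(u, v'') \in T_{(s,x)} W$ with first component equal to $u$; by definition of $W$, $DF_{(s,x)}(u,v'') \in T_y Z$. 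Rewriting,
\[
w \;=\; D(f_s)_x(v - v'') \;+\; \bigl[DF_{(s,x)}(u, v'') + z\bigr],
\]
and the bracketed term lies in $T_y Z$, giving $w \in \mathrm{Image}(D(f_s)_x) + T_y Z$. Hence $f_s \pitchfork Z$ at $x$, and since $x$ was arbitrary in $f_s^{-1}(Z)$ the claim follows.

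What requires the most care is ensuring the implicit hypotheses (finite-dimensional, second countable, boundaryless manifolds) are in force so that Sard applies cleanly, and that the tangent-space decomposition in the linear-algebra step genuinely uses the regular-value property to absorb the $T_s S$ component into $T_y Z$. No deeper input is needed beyond the preimage theorem, Sard, and the decomposition above; in an infinite-dimensional (Banach manifold) setting one would instead invoke the Sard–Smale theorem and require $\pi$ to be Fredholm of sufficient index, but that refinement is unnecessary in the form stated here.
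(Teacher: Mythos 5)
The paper gives no proof of this theorem, citing Guillemin--Pollack \cite[Section 2.3]{guillemin2010differential}, and your argument is precisely the standard proof found there: realize $W = F^{-1}(Z)$ as a transversely cut-out submanifold, show that $s$ being a regular value of the projection $\pi\colon W \to S$ forces $f_s \pitchfork Z$ via the tangent-space decomposition you give, and conclude by Sard. Your linear-algebra step and the absorption of the $T_sS$ component into $T_yZ$ via $T_{(s,x)}W = (DF_{(s,x)})^{-1}(T_yZ)$ are correct, so this is a faithful reconstruction of the reference the paper relies on.
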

We will apply Theorem \ref{thm:transversality} to the mapping $(b, x) \mapsto \B^{(2)}_b(x)$ defined by 
	\[\B^{(2)}(b,x) = (x, B_b(x,x), 2 B_b(B_b(x,x), x)) \,, \]
	viewed as a smooth mapping $\Bc \times \R^d \to (\R^d)^3$. 
The mapping $\B^{(2)}$ has the property that given $b \in \Bc$ and a trajectory $x: \R \to \R^d$ of the ODE $\dot x = B_b(x,x)$, it holds that
	\[\B^{(2)}_b(x(t)) = (x(t), \dot x(t), \ddot x(t)) \,.\]

As we will show below, the following is sufficient to prove Proposition \ref{prop:genericPassthrough}. 
	\begin{proposition}\label{prop:quadTransversality2}
		Let $\mathcal{S} \subset \set{1, \dots, J}$ have cardinality $q \geq 3$. Then, 
		the mapping 
		\[\B^{(2)} : \Bc \times(\R^{d} \setminus \Ec_{q-1} ) \to \R^{3d}\] is transversal to $(\R^{\mathcal{S}})^3$. 
	\end{proposition}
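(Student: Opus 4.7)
The plan is to dualize the transversality statement and exploit the Jacobi-identity description of $\Bc$ from Proposition \ref{prop:constraintClass2} together with the hypothesis $q \geq 3$. Fix a preimage point $(b, x) \in \Bc \times (\R^d \setminus \Ec_{q-1})$, so $x$, $y := B_b(x,x)$, and $B_b(y,x)$ all lie in $\R^\mathcal{S}$; because $x \notin \Ec_{q-1}$ one moreover has $\supp(x) = \mathcal{S}$, a fact used repeatedly in what follows. Transversality is equivalent to surjectivity of the composition of $D\B^{(2)}_{(b,x)}$ with the natural projection $\R^{3d} \to (\R^{\mathcal{S}^c})^3$, so by duality I aim to show that every covector $(\phi_1, \phi_2, \phi_3) \in (\R^{\mathcal{S}^c})^3$ annihilating the image must vanish. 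Computing $D\B^{(2)}$ and introducing $Lv := B_b(x,v)$, $L_y v := B_b(y, v)$, $\alpha := \phi_2 + 2L^\top \phi_3$ and $\beta := \phi_3$, this annihilation condition splits into a $\dot x$-equation $\phi_1 + 2L^\top \phi_2 + 4(L^\top)^2 \phi_3 + 2 L_y^\top \phi_3 = 0$ and a $\dot b$-equation of the form $\sum_{i,p,q} T^i_{pq}\, \dot b^i_{pq} = 0$, where $T^i_{pq} = \alpha^i x^p x^q + \beta^i(y^p x^q + y^q x^p)$.

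The central algebraic step is that the $\dot b$-equation on $T_b \Bc$ is equivalent to total symmetry of $T^i_{pq}$ in the three indices on every triple of pairwise distinct $(i,p,q)$. This follows because the Jacobi relation from Proposition \ref{prop:constraintClass2} cuts the three ``free'' entries at each such triple down to a two-dimensional subspace whose annihilator in the dual is spanned by a single totally symmetric functional. I would then test this total symmetry on two critical index configurations. First, distinct $i, p, q \in \mathcal{S}$ (possible since $|\mathcal{S}| \geq 3$): because $x^j \neq 0$ throughout $\mathcal{S}$, the symmetry forces $\alpha^i/x^i$ to be constant across $\mathcal{S}$, so $\alpha|_\mathcal{S} = c\, x|_\mathcal{S}$ for some $c \in \R$. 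Second, $q \in \mathcal{S}^c$ with distinct $i, p \in \mathcal{S}$: since $x^q = y^q = 0$, two of the three symmetry expressions vanish automatically, collapsing the symmetry to the relation $\alpha^q + \beta^q(y^i/x^i + y^p/x^p) = 0$. Having at least three distinct indices in $\mathcal{S}$ produces an overdetermined system which forces either $\alpha^q = \beta^q = 0$ or the constancy of $y^i/x^i$ over $\mathcal{S}$; combined with the energy constraint $y \cdot x = 0$, the latter alternative degenerates to $y = 0$.

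In the generic case $y \neq 0$, the preceding dichotomy gives $\alpha|_{\mathcal{S}^c} = \beta|_{\mathcal{S}^c} = 0$; since $\beta = \phi_3 \in \R^{\mathcal{S}^c}$ this yields $\phi_3 = 0$, whence $\alpha = \phi_2 = cx$, and since $\phi_2 \in \R^{\mathcal{S}^c}$ while $cx \in \R^\mathcal{S}$ we conclude $\phi_2 = 0$ and $c = 0$. The $\dot x$-equation then collapses to $\phi_1 \cdot \dot x = 0$, giving $\phi_1 = 0$ and completing the surjectivity. The main obstacle I anticipate is the degenerate equilibrium locus $y = 0$, on which the $\dot b$-functional only constrains $\alpha = cx$ and leaves $\beta$ free. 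There the identity $L^\top x = -y/2 = 0$ combined with the $\dot x$-equation automatically forces $\phi_1 = 0$ and reduces the problem to the single linear relation $cx = 2(L^\top \phi_3)|_\mathcal{S}$ between the residual parameters $c$ and $\phi_3$; eliminating this residual kernel requires either restricting to a Zariski-open subset of $\Bc$ on which the equilibrium variety has sufficiently high codimension, or stratifying the preimage and applying Theorem \ref{thm:transversality} on each stratum separately. This algebraic and dimensional analysis of the equilibrium stratum is the technically delicate part of the argument; the remainder of the proof is a routine assembly of the conclusions above.
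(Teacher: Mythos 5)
Your dual approach is genuinely different from the paper's argument and is a clean reformulation. The paper works directly: after reducing modulo $y = s$, $v_1 = 0$, it shows surjectivity of the $c \mapsto (\Pi_{>q}D_b \dot x(c), \Pi_{>q} D_b \ddot x(c))$ map by choosing $c$ from a carefully picked subspace $\Bc^q_{(k;m,p)} \subset T_b\Bc$ and reducing invertibility of the resulting $2(d-q) \times 2(d-q)$ matrix $M$ to the nonvanishing of the single $2\times2$ determinant $\det D = 4(x^k)^2(\dot x^p x^m - \dot x^m x^p)$. Your approach instead dualizes: you show every annihilating covector vanishes, and the key algebraic observation — that annihilating the $\dot b$-variation within $T_b\Bc$ is equivalent to total symmetry of $T^i_{pq}$ over each distinct triple, because the Jacobi constraint leaves a two-dimensional slot at each triple whose annihilator is the symmetric functional — is correct and elegant. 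Your index-configuration analysis for the non-equilibrium case is also sound: Configuration 2 with $q \geq 3$ gives an overdetermined system forcing $\alpha^q = \beta^q = 0$ on $\mathcal{S}^c$ unless $\dot x \propto x$, which by energy conservation implies $\dot x = 0$; then $\phi_3 = \beta = 0$, hence $\phi_2 = \alpha|_{\mathcal{S}^c} = 0$, and the $y$-variation kills $\phi_1$. Interestingly, both your dual argument and the paper's determinant computation ultimately hinge on the same nondegeneracy: $\dot x$ must not be a scalar multiple of $x$.

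However, there is a genuine gap, and you have identified it yourself but not closed it: the equilibrium stratum $\dot x = B_b(x,x) = 0$ inside $\R^{\mathcal{S}} \setminus \Ec_{q-1}$. On that stratum your annihilator argument leaves $\phi_3$ unconstrained (modulo linear relations), so transversality genuinely fails at those $(b,x)$. The paper resolves this with Lemma~\ref{lem:equilibria}, which asserts that for a generic subset of $b$ the zero set $\{B_b(x,x)=0\} \cap \R^{\mathcal{S}}$ is contained in $\Ec$, i.e., the equilibrium stratum you are worried about is actually \emph{empty} on the relevant domain. Lemma~\ref{lem:equilibria} is itself nontrivial and is proved by a separate transversality argument (for the one-jet map $\B^{(1)}$ into $TS^{d-1}$ transversal to the zero section over $\R^{\mathcal{S}}$), together with a dimension count using $q < d$. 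Your two proposed fixes are weaker than what is actually needed: ``high codimension of the equilibrium variety'' does not by itself restore transversality at those points (you need the preimage there to be empty, not just small), and ``stratifying the preimage and applying Theorem~\ref{thm:transversality} stratum-by-stratum'' is vague about what submanifolds you would then be transversal to and does not obviously terminate. Without a proof of something like Lemma~\ref{lem:equilibria}, your argument establishes the transversality only off the equilibrium locus, which is not sufficient to invoke the Transversality Theorem as stated. In short: the non-equilibrium part of your argument is correct and takes a different, arguably cleaner, route than the paper, but the equilibrium case is a real hole that requires the separate generic-equilibria lemma that the paper proves and you have only gestured toward.
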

Above and in what follows we have and shall continue to use the following notation:  
\begin{itemize}
	\item[(1)] Given a subset $\mathcal{S} \subset \{ 1, \dots, d\}$ we shall write 
	\[\R^{\mathcal{S}} = \Span\set{e_i : i \in \mathcal{S}} \, .\]
	\item[(2)] For $j \leq d$ we shall write 
	\[\Ec_j = \bigcup_{\substack{\mathcal{S} \subset \set{1, \dots, d} \\ |\mathcal{S}| = j}} \R^{\mathcal{S}}\]
	for the union over all coordinate hyperplanes of dimension $j$. 
	Observe that when $|\mathcal{S}| = q \geq 2$, it holds that $\R^{\mathcal{S}} \setminus \Ec_{q -1}$ is the set of $x \in \R^{\mathcal{S}}$ for which $x^i \neq 0$ for all $i \in \mathcal{S}$. 
	\item[(3)] When $b \in \Bc$ is fixed we will abuse notation somewhat and write \begin{gather*}
		\dot x = \dot x(b, x) = B_b(x,x) \, , \\ 
		\ddot x = \ddot x(b, x) = 2 B_b(B_b(x,x),x) \, , 
	\end{gather*} 
	so that $\B^{(2)}(b, x) = (x, \dot x, \ddot x)$.
\end{itemize}
 
\subsection*{Proof of Proposition \ref{prop:genericPassthrough} assuming Proposition \ref{prop:quadTransversality2}}

The proof of Proposition \ref{prop:quadTransversality2} is deferred till the end of Section \ref{subsec:genericQuadPassthrough}, and for now we will turn to how the proof of Proposition \ref{prop:genericPassthrough} is to be completed. 

We begin with the following claim. 

\begin{claim}\label{cla:passThroughStep}
	Let $\mathcal{S} \subset \set{1, \dots, J}$ have cardinality $q \geq 2$. 
	There is a generic subset of $b \in \Bc$ with the property that 
	\begin{align} \label{eq:stickOut3} x \in \R^{\mathcal{S}} \setminus \Ec_{q-1} \quad \Rightarrow \quad (\dot x, \ddot x) \notin \R^{\mathcal{S}} \,. \end{align}
\end{claim}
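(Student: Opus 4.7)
The plan is to deduce the claim directly from Proposition \ref{prop:quadTransversality2} and the parametric transversality theorem (Theorem \ref{thm:transversality}), with the hypothesis $J < 2d/3$ entering via a codimension-versus-dimension count that forces a certain preimage to be empty.

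Assume first that $q \geq 3$. By Proposition \ref{prop:quadTransversality2}, the smooth map
\[
\B^{(2)} : \Bc \times (\R^d \setminus \Ec_{q-1}) \to \R^{3d}
\]
is transversal to the linear subspace $(\R^{\mathcal{S}})^3 \subset \R^{3d}$. Theorem \ref{thm:transversality}, applied with $S = \Bc$, $X = \R^d \setminus \Ec_{q-1}$, $Y = \R^{3d}$, and $Z = (\R^{\mathcal{S}})^3$, then yields that for Lebesgue-a.e. $b \in \Bc$ the slice $\B^{(2)}_b := \B^{(2)}(b, \cdot)$ is itself transversal to $(\R^{\mathcal{S}})^3$. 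For any such $b$, if the preimage $(\B^{(2)}_b)^{-1}((\R^{\mathcal{S}})^3)$ were nonempty, transversality would make it a smooth submanifold of $\R^d \setminus \Ec_{q-1}$ of codimension $3(d-q)$, hence of dimension $d - 3(d-q) = 3q - 2d$. Since $q \leq J < 2d/3$, this dimension is strictly negative, a contradiction; therefore the preimage is empty. In particular, for every $x \in \R^{\mathcal{S}} \setminus \Ec_{q-1}$ one has $\B^{(2)}_b(x) = (x, \dot x, \ddot x) \notin (\R^{\mathcal{S}})^3$, and since the first coordinate $x$ already lies in $\R^{\mathcal{S}}$, this is equivalent to $(\dot x, \ddot x) \notin \R^{\mathcal{S}} \times \R^{\mathcal{S}}$, which is the claim.

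It remains to handle $q = 2$ separately, as Proposition \ref{prop:quadTransversality2} is stated under $q \geq 3$. Writing $\mathcal{S} = \set{i,j}$ and $x = \alpha e_i + \beta e_j$ with $\alpha \beta \neq 0$, the constraint $B_b(e_k, e_k) = 0$ from \eqref{eq:vanishCoordAxes} yields
\[
\dot x = 2 \alpha \beta\, B_b(e_i, e_j),
\]
whose $k$-th component for $k \notin \mathcal{S}$ equals $2 \alpha \beta\, b^k_{ij}$. By Remark \ref{eq:specifyFromJacobiIdentity}, the coefficients $\set{b^k_{ij} : k \notin \mathcal{S}}$ vary freely within $\Bc$, so the set
\[
\set{b \in \Bc : b^k_{ij} = 0 \text{ for all } k \notin \mathcal{S}}
\]
is a proper linear subvariety of $\Bc$. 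Outside this subvariety (a full Lebesgue-measure open set), $\dot x \notin \R^{\mathcal{S}}$ for every admissible $x$, which already suffices.

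The bulk of the work lies in establishing the underlying Proposition \ref{prop:quadTransversality2}; once that is granted, the claim follows essentially from the dimension inequality $3(d-q) > d$, which is where the hypothesis $J < 2d/3$ enters. I do not anticipate any technical obstacle in the claim itself; the conceptual crux is really the transversality input, not the elementary count used here to exploit it.
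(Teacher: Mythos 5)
Your proposal is correct and follows essentially the same route as the paper: the $q\geq 3$ case via Proposition \ref{prop:quadTransversality2}, Theorem \ref{thm:transversality}, and a dimension count that forces the preimage to be empty when $q\leq J < 2d/3$, and the $q=2$ case handled by hand via the observation that $\dot x = 2\alpha\beta B_b(e_i,e_j)$ sticks out of $\R^{\mathcal{S}}$ under the generic condition $b^k_{ij}\neq 0$ for some $k\notin\mathcal{S}$. Your codimension formulation ($\dim = 3q - 2d < 0$) and the paper's direct image-plus-tangent count ($3d \le 3q + d$) are the same argument phrased slightly differently.
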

\begin{proof}
	If $|\mathcal{S}| \geq 2$ it is straightforward to check that $\dot x \notin \R^{\mathcal{S}}$ for $x \in \R^{\mathcal{S}} \setminus \Ec_1 = \R^{\mathcal{S}} \setminus \Ec$ if $b^i_{jk} \neq 0$ for $j, k$ the two distinct elements of $\mathcal{S}$ and for some $i \notin \mathcal{S}$. This is a generic condition in $b$ and \eqref{eq:stickOut3} follows. 

	If $|\mathcal{S}| \geq 3$, Proposition \ref{prop:quadTransversality2} and the Transversality Theorem imply that for a full Lebesgue-measure subset of $b \in \Bc$ it holds that $B^{(2)}_b(\cdot) = \B^{(2)}(b, \cdot) : (X^{\mathcal{S}} \setminus \Ec_{q -1}) \to \R^{3d}$ is transversal to $(\R^{\mathcal{S}})^3$. Assuming transversality as above and when $J < \frac{2d}{3}$, we will now check that 
\[x \in \R^{\mathcal{S}} \quad \Rightarrow \quad (\dot x, \ddot x) \notin (\R^{\mathcal{S}})^2 \, .\] Otherwise, transversality and existence of $x \in (B^{(2)}_b)^{-1} (\R^{\mathcal{S}})^3$ would imply
\[\operatorname{Image}((D B^{(2)}_b)_x) + T_{B^{(2)}_b(x)} (\R^{\mathcal{S}})^3 = \R^{3d} \, , \]
from which $3d \leq 3 q + d \leq 3 J + d$ and $\frac{2d}{3} \leq J$, a contradiction. 
\end{proof}

\begin{proof}[Proof of Proposition \ref{prop:genericPassthrough} assuming Proposition \ref{prop:quadTransversality2}]

We will prove \eqref{eq:passthroughDerivFormulation} in the following equivalent form: 
\begin{gather} \label{eq:trajectoryFormulation3} x(t_0) \in \Kc \setminus \Ec \quad \text{ for some } t_0 \in \R \quad \Rightarrow \\ \forall \eps > 0 \text{ there exists } t \in (t_0, t_0 + \eps) \text{ such that } x(t) \notin \Kc \,. 
\end{gather} 
That \eqref{eq:trajectoryFormulation3} and \eqref{eq:passthroughDerivFormulation} are equivalent is a consequence of analyticity of $t \mapsto x(t)$ and a compactness argument.

Let $b \in \Bc$ be generic in the sense of Claim \ref{cla:passThroughStep} all $\mathcal{S} \subset \set{1, \dots, J}$ of cardinality $\geq 2$. Let $x : \R \to \R^d$ be a trajectory of $\dot x = B_b(x,x)$ and assume $x(t_0) \in \Kc \setminus \Ec$ for some $t_0 \in \R$. 

Let $\mathcal{S}_0$ be the set of nonzero coordinates of $x(t_0)$, noting $|\mathcal{S}_0| \geq 2$. By Claim \ref{cla:passThroughStep}, $(\dot x, \ddot x)(t_0) \notin \R^{\mathcal{S}_0}$. In particular, $\exists t_1 \in (t_0, t_0 + \frac{\eps}{J})$ such that the set $\mathcal{S}_1$ of nonvanishing coordinates of $x(t_1)$ is strictly larger, i.e., 
\[\mathcal{S}_1 \supsetneq \mathcal{S}_0 \, .\]

If $x(t_1) \notin \Kc$ then we are done. Otherwise, $\R^{\mathcal{S}_1} \subset \mathcal{K}$, and repeating the argument of the previous paragraph it holds that $\exists t_2 \in (t_1, t_1 + \frac{\eps}{J})$ such that the set of nonvanishing coordinates $\mathcal{S}_2$ of $x(t_2)$ strictly contains $\mathcal{S}_1$. Repeating this argument inductively, we obtain a sequence of times $t_1 < t_2 < \dots < t_r$ with $t_j - t_{j-1} < \frac{\eps}{J}$ for which the sets of nonvanishing coordinates $\mathcal{S}_j$ of $x(t_j)$ satisfy
\[\mathcal{S}_0 \subsetneq \mathcal{S}_1 \subsetneq \cdots \subsetneq \mathcal{S}_{j-1} \subsetneq \mathcal{S}_j \subsetneq \cdots  \, . \]
Since $|\mathcal{S}_{j}| \geq |\mathcal{S}_{j-1}| + 1 \geq \dots \geq |\mathcal{S}_0| + j \geq j + 2$, this procedure must terminate at some finite stage $r \leq d - 2$, resulting in $t_r > 0$ and $x(t_r) \notin \Kc$. This completes the proof. 
\end{proof}

Before proceeding, we comment on some aspects of the above proofs. 

\begin{remark} \label{rem:d/3} \ 
	\begin{itemize}
		\item[(1)] The constraint $J < \frac{2d}{3}$ comes up only in the proof above of Claim \ref{cla:passThroughStep}. Indeed, if instead one were to work with 
		\[\B^{(k)}(b, x) = (x, \dot x, \ddot x, x^{(3)}, \dots, x^{(k)})\]
		taking values in the $k$-th iterated tangent bundle $T^{(k)} \R^d \cong \R^{(k + 1) d}$, then transversality of $\B^{(k)}$ and $(\R^{\mathcal{S}})^{k + 1}$ would imply $(B^{(k)}_b)^{-1} (\R^{\mathcal{S}})^{k+1} = \emptyset$ when $J < \frac{k d}{k + 1}$ by the same dimension-counting argument as before. 
		
	 On the one hand, there are $2{d \choose 3}$ independent degrees of freedom in $\Bc$, which leaves ample-enough degrees of freedom to prove transversality when $2 {d \choose 3} > (k + 1) d$, i.e., for $k \approx d^2 / 3$ when $d$ is large. On the other hand, the number of terms in $D_b \B^{(k)}$ grows rapidly as $k$ increases, and the authors are unaware of how to proceed even for $k = 3$. 
		

		\item[(2)] There is a small subtlety here in the use of the second-derivative section $\B^{(2)}$: we do not, in fact, establish any information on $\dot x(t), \ddot x(t)$ when $x(t) \in \Kc$, but rather, that some higher derivative $\frac{d^j}{dt^j} x(t)$ is not parallel to $\Kc$. This is due to the fact that derivatives are `expended' in passing out of the nested family of degenerate subsets $\Ec_2, \Ec_3, \dots, \Ec_{J-1}$ as indicated at the end of the proof of Proposition \ref{prop:genericPassthrough}. 
		
		The following synthetic model illustrates this point: consider the vector field 
		\[V(z^1, \dots, z^d) = (1, z^1, z^2, \dots, z^{d-1})  \]
		for which the initial condition $z(0) = (0,\dots, 0)$ has the trajectory $z(t) = (t, t^2 / 2!, t^3 / 3!, \dots, t^d / d!)$. Even though $V(z) \notin \mathfrak{S}_\ell$ for all $z \in \mathfrak{S}_\ell \setminus \mathfrak{S}_{\ell - 1}$, the trajectory $z(t)$ takes on higher powers of $t$ to `climb' up the chain of degenerate sets $\mathfrak{S}_1 \subset \mathfrak{S}_2 \subset \dots \subset \mathfrak{G}_{d-1}$.

	\end{itemize}
\end{remark}

\subsubsection*{Proof of Proposition \ref{prop:quadTransversality2}}

Turning to the proof of Proposition \ref{prop:quadTransversality2}, let $3 \leq q \leq J$; without loss, let us take $\mathcal{S} = \set{1, \dots, q}$. We compute 
\begin{align*}
	D_{(b, x)}\mathbb{B}_b^{(2)}(c,y) 
	=   
	\bigg( & y,B_c(x,x)+2B_b(x,y),
	\\
	& 2B_c(B_b(x,x),x)+2B_b(B_c(x,x),x) \\ 
	& + 4 B_b (B_b(x,y), x) + 4 B_b(B_b(x,x), y) \bigg) \, \\ 
	& = ( y, D_b \dot x(c) + D_x \dot x(y) , D_b \ddot x(c) + D_x \ddot x(y)) \,. 
\end{align*}
We seek to show that for $b \in \Bc$ (perhaps subject to a generic condition, to be imposed later on) and for any $x \in \R^{\mathcal{S}} \setminus \Ec_{q-1}$ we have that for all $(s,t,u)\in \R^{3d}$  there exist $(c,y) \in \Bc \times \R^d$ and $v_1,v_2,v_3 \in \R^{\mathcal{S}}$ such that
\begin{align}\label{eq:objectiveB3}
	D_{(b, x)}\mathbb{B}^{(2)}(c,y) + (v_1,v_2,v_3) = (s,t,u).
\end{align}
Throughout we set $y=s$, $v_1 = 0$, which takes care of the first entry in \eqref{eq:objectiveB3}. 
It remains to treat the second and third entries.


Let $\Pi_{\leq q}$ denote orthogonal projection in $\R^d$ to $\Span\set{e_1, \dots, e_q}$ and $\Pi_{> q} = \Id - \Pi_{\leq q}$. Let $\bar t = t - D_x \dot x(y), \bar u = u - D_x \ddot x(y)$. Below, we will specify $c \in \Bc$ so that 
\[\Pi_{> q} D_b \dot x(c) = \Pi_{> q} \bar t \,, \qquad \Pi_{> q} D_b \ddot x(c) = \Pi_{> q} \bar u \, , \]
whereupon we will set 
\[v_2 = \Pi_{\leq q} \left(\bar t - D_b \dot x(c)\right) \, , \qquad v_3 = \Pi_{\leq q} \left(\bar u - D_b \ddot x(c)\right) \, \]
and \eqref{eq:objectiveB3} will follow. 

For $\set{k, m, p} \subset \{ 1, \dots, q\}$ distinct, let\footnote{Above, we abuse notation and treat $\Bc^q_{(k; m, p)}$ as a subspace of the tangent fiber $T_b \Bc \cong \Bc$ at some $b \in \Bc$, so that the derivation $\partial / \partial b^i_{k,m}$ is identified with $c = (c^j_{lm}) \in T_b \Bc$ with $c^j_{ln} = 1$ if $(j, l,n) = (i,k,m)$ and zero otherwise. } 
\[\Bc^q_{(k; m, p)} = \Span\set{\frac{\partial}{\partial b^i_{k,m}}, \frac{\partial}{\partial b^i_{k,p}} }_{i \in \set{q + 1, \dots, d}} \subset \Bc \,.  \] 
Consider the linear operator 
\[M : \Bc^q_{(k; m, p)} \to \Span\set{e_{q + 1}, \dots, e_d}^2\]
given for $c \in \Bc^q_{(k; m, p)}$ by 
\[M(c) = (\Pi_{> q} D_b \dot x(c), \Pi_{> q} D_b \ddot x(c)) \, . \]

To complete the proof of Proposition \ref{prop:quadTransversality2} it suffices to check that $M$ as above is invertible for \emph{some} choice of $\set{k, p, m} \subset \{ 1, \dots, q\}$. To see how this is done, we compute 
\begin{align*}
	D_b \dot x^i\left( \frac{\partial}{\partial b^j_{km}}\right) &= 2\delta^i_jx^kx^m,
	\\
	D_b \ddot x^i\left( \frac{\partial}{\partial b^j_{km}} \right)  &= 2\delta^i_j\sum_{k_2,k_3} (b^k_{k_2,k_3}x^m + b^m_{k_2,k_3}x^k)x^{k_2}x^{k_3}
	+ \sum_{k_1}4b^i_{j,k_1}x^{k_1}x^kx^m
	\\
	&=2\delta^i_j(\dot x^kx^m+\dot x^mx^k) + (\partial_{x^j}\dot x^i) (x^kx^m) \, , 
\end{align*}
where $\delta^i_j$ denotes the Kronecker delta. In matrix form, $M$ square of dimensions $2 (d - q) \times 2(d-q)$, expressed as
\begin{align*}
	\left(
	\begin{matrix}
		2x^kx^m & 2x^kx^p & 0 & 0 & \hdots
		\\
		2\dot x^kx^m+2\dot x^mx^k & 2\dot x^kx^p + 2\dot x^px^k & \partial_{x^{q+2}}\dot x^{q+1}(2x^kx^m) & \partial_{x^{q+2}}\dot x^{q+1}(2x^kx^p) & \hdots
		\\
		0 & 0 & 2x^kx^m & 2x^kx^p & \hdots
		\\
		\partial_{x^{q+1}}\dot x^{q+2}(2x^kx^m) & \partial_{x^{q+1}}\dot x^{q+2}(2x^kx^p) & 2\dot x^kx^m+2\dot x^mx^k & 2\dot x^kx^p+2\dot x^px^k & \hdots
		\\
		\vdots & \vdots & \vdots & \vdots & \hdots
		\\
		0 & 0 & 0 & 0 & \hdots
		\\
		\partial_{x^{q+1}}\dot x^d(2x^kx^m) & \partial_{x^{q+1}}\dot x^d(2x^kx^p) & \partial_{x^{q+2}}\dot x^d(2x^kx^m) & \partial_{x^{q+2}}\dot x^d(2x^kx^p) & \hdots
	\end{matrix}\right.                
	\\
	\left.
	\begin{matrix}
		\hdots & 0 & 0
		\\
		\hdots & \partial_{x^{d}}\dot x^{q+1}(2x^kx^m) & \partial_{x^{d}}\dot x^{q+1}(2x^kx^p)
		\\
		\hdots & 0 & 0
		\\
		\hdots & \partial_{x^d}\dot x^{q+2}(2x^kx^m) & \partial_{x^d}\dot x^{q+2}(2x^kx^p)
		\\
		\hdots & \vdots & \vdots
		\\
		\hdots & 2x^kx^m & 2x^kx^p
		\\
		\hdots &2\dot x^kx^m+2\dot x^mx^k & 2\dot x^kx^p+2\dot x^px^k
	\end{matrix}\right).
\end{align*}
where the basis for the domain (i.e., the columns of $M$) is enumerated as 
\[\set{\frac{\partial}{\partial b^1_{k,m}}, \frac{\partial}{\partial b^1_{k,p}}, \frac{\partial}{\partial b^2_{k,m}}, \frac{\partial}{\partial b^2_{k,p}}, \dots}\]
and the basis for the codomain (i.e., the rows of $M$) is enumerated as 
\[\set{(e_{q + 1}, 0), (0,e_{q + 1}), (e_{q + 2}, 0), (0,e_{q + 2}), \dots, (e_d, 0), (0,e_d)} \,. \]

Writing  $R_{11}, R_{12}, R_{21}, R_{22}, \dots, R_{(d - q),1}, R_{(d-q),2}$ for the $2(d - q)$ rows of the matrix $M$, let us perform the sequence of row replacements
\begin{align*} R_{22} & \mapsto R_{22} - (\partial_{x^{q+1}} \dot x^{q + 2}) R_{11} \\ 
R_{32} & \mapsto R_{32} - (\partial_{x^{q+1}} \dot x^{q + 3}) R_{11} \\ 
& \vdots \\
R_{(d-q), 2} & \mapsto R_{(d - q), 2} - (\partial_{x^{q+1}} \dot x^{d}) R_{11} \,. 
\end{align*}
By inspection, this series of row replacements eliminates all nonzero entries of $M$ in rows $R_{22}, R_{32}, \dots, R_{(d-q),2}$, and since $R_{11}$ has nonzero entries only in the first two columns, the remaining columns of $M$ are unaltered. Repeating this procedure, we successively eliminate all entries of $M$ off the main $2 \times 2$ block diagonal, hence $M$ row reduces to
\begin{align*}
	\begin{pmatrix}
		D & \hdots & 0
		\\
		\vdots & \ddots & \vdots
		\\
		0 & \hdots & D 
	\end{pmatrix}
\end{align*}
where $D$ is the $2 \times 2$ matrix given\footnote{Note that $D$ does not depend on $i$.} by
\begin{align*}
	D = \frac{\partial(\dot x^i,\ddot x^i)}{\partial (b^i_{k,m},b^i_{k,p})} 
	=
	\begin{bmatrix}
		2x^kx^m & 2x^kx^p
		\\
		2\dot x^kx^m + 2\dot x^mx^k & 2\dot x^kx^p+2\dot x^px^k
	\end{bmatrix} \,. 
\end{align*}
Thus, $M$ is invertible iff $D$ is invertible iff the expression 
\begin{align*}
	\det(D) = 4(x^k)^2(\dot x^px^m-\dot x^mx^p) 
\end{align*}
is nonvanishing. 
By this argument, we have reduced Proposition \ref{prop:quadTransversality2} to checking the following. 
\begin{claim}\label{cla:detDnonvanishing}
	There is an open, dense and full Lebesgue-measure set of $b \in \Bc$ with the following property: for any $x \in \R^{\mathcal{S}} \setminus \Ec_{q-1}$ there exists $\{ k, m, p\} \subset \{ 1, \dots, q\}$ such that $\det(D) \neq 0$. 
\end{claim}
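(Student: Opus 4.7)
The plan is to reduce the claim to non-existence of interior equilibria for a restricted bilinear map, exhibit an explicit $b_*$ realizing this non-existence, and then use elimination theory to propagate to generic $b$.

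\emph{Reduction.} Fix $x \in \R^{\mathcal{S}} \setminus \Ec_{q-1}$. Since every $x^i$ ($i \in \mathcal{S}$) is nonzero and $q \geq 3$ always permits a third distinct index $k$, the existence of $\{k,m,p\}$ with $\det D \neq 0$ is equivalent to the existence of distinct $m, p \in \mathcal{S}$ with $\dot x^p x^m \neq \dot x^m x^p$. This says $\Pi_{\mathcal{S}} \dot x \in \R^{\mathcal{S}}$ is not parallel to $x$; combined with the energy identity $\Pi_{\mathcal{S}} B_b(x, x) \cdot x = 0$ and $x \neq 0$, parallelism forces $\Pi_{\mathcal{S}} B_b(x, x) = 0$. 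Writing $\bar B_b := \Pi_{\mathcal{S}} B_b|_{\R^{\mathcal{S}}}$, the claim reduces to showing that for generic $b \in \Bc$, $\bar B_b$ has no equilibrium in $U := \R^{\mathcal{S}} \setminus \Ec_{q-1}$.

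\emph{An explicit good $b_*$.} After relabeling, assume $\{1, 2, 3\} \subseteq \mathcal{S}$, and let $b_* \in \Bc$ have nonvanishing coefficients $b^i_{jk}$ only when $\{i, j, k\} = \{1, 2, 3\}$. By \eqref{eq:jacobiIdentity}, the three parameters $(b_*)^1_{23}, (b_*)^2_{13}, (b_*)^3_{12}$ satisfy the single relation $(b_*)^1_{23} + (b_*)^2_{13} + (b_*)^3_{12} = 0$, and I pick all three nonzero (the remaining constraints of Proposition~\ref{prop:constraintClass2} are automatic). Then $(\bar B_{b_*}(x,x))^i \equiv 0$ for $i \geq 4$, while for $i \in \{1, 2, 3\}$ the component is $2(b_*)^i_{jk} x^j x^k$ with $\{j, k\} = \{1, 2, 3\} \setminus \{i\}$. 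A common vanishing with $x^1 x^2 x^3 \neq 0$ would force $x^1 x^2 = x^1 x^3 = x^2 x^3 = 0$, which is impossible. Thus $\bar B_{b_*}$ has no equilibrium in the larger set $(\mathbb{C}\setminus\{0\})^{\mathcal{S}}$.

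\emph{Propagation via elimination theory.} Since $\bar B_b(x, x)^i$ depends linearly on $b$, standard elimination theory---applied e.g.\ via the sparse (toric) resultant of Gelfand--Kapranov--Zelevinsky, or via the Rabinowitsch trick of adjoining $y$ with the equation $y \, x^1 \cdots x^q = 1$ to the ideal $\langle \bar B_b(x, x)^i \rangle$ and using Noetherian uniformization of degrees---produces a polynomial $R : \Bc \to \R$ whose vanishing locus contains the ``bad set'' $V := \{b \in \Bc : \exists\, x \in (\mathbb{C}\setminus\{0\})^{\mathcal{S}},\ \bar B_b(x, x) = 0\}$. By the previous step, $R(b_*) \neq 0$, so $R$ is not identically zero, and $\{R = 0\}$ is a proper real algebraic subvariety of $\Bc$---closed, of empty interior, and of Lebesgue measure zero. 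Every $b$ in the open, dense, full-measure complement $\{R \neq 0\}$ yields a $\bar B_b$ with no complex equilibrium in $(\mathbb{C}\setminus\{0\})^{\mathcal{S}}$, hence no real equilibrium in $U$, which proves the claim. The main delicate ingredient is the construction of the polynomial $R$; this requires some care with uniform-in-$b$ degree bounds (via Noetherianity) but is standard in elimination theory and is where I expect the bulk of the technical work to lie.
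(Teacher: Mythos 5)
Your reduction and the explicit construction of $b_*$ are both sound, and the reduction is in fact a little cleaner than the paper's: by combining non-parallelism with the orthogonality $\Pi_{\mathcal{S}} B_b(x,x) \cdot x = 0$ you directly show that $\det D = 0$ for all $\{k,m,p\}$ forces $\Pi_{\mathcal{S}} B_b(x,x) = 0$, whereas the paper reaches the same point via the auxiliary quantity $X = x^1\cdots x^{j-1}\dot x^j x^{j+1}\cdots x^q$ and then additionally invokes the (implicit) hypothesis $(\dot x, \ddot x) \in (\R^{\mathcal{S}})^2$ inherited from the transversality preimage. The real gap is in the propagation step: exhibiting a single $b_*$ outside the bad set $V := \{b : \bar B_b \text{ has a zero in } (\mathbb{C}\setminus\{0\})^{\mathcal{S}}\}$ does not, on its own, produce a nonzero polynomial $R$ with $V \subseteq \{R = 0\}$.

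The obstacle is that $V$ is the projection of a \emph{locally closed}, not closed, subvariety. After projectivizing in $x$, the set $\{(b,[x]) \in \Bc \times \mathbb{P}^{q-1} : \bar B_b(x,x) = 0\}$ is closed and thus has closed image under projection to $\Bc$ — but that image is \emph{all} of $\Bc$, because $[e_j]$ is always a zero ($B(e_j,e_j)=0$). To detect torus zeros you must excise $\{\prod_i x^i = 0\}$, leaving a locally closed set whose image is merely constructible (Chevalley). A constructible subset of an affine space with nonempty complement need not lie in a proper algebraic subvariety: it may contain a dense open set, with $b_*$ lying in the nowhere-dense remainder, and then no such $R$ exists. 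Consistent with this, the Macaulay resultant of the $\bar B_b^i$ vanishes identically because of the fixed boundary solutions $[e_j]$, and the Rabinowitsch/saturation recipe produces a rank condition on a linear system (a constructible, not closed, condition on $b$), not a polynomial equality. To close the argument one must show that the Zariski closure of $V$ is a proper subvariety, or equivalently that the good set is dense; neither follows from $b_* \notin V$ alone. The paper sidesteps this entirely by proving Lemma \ref{lem:equilibria} with a transversality/Sard argument: surjectivity of the combined $(b,x)$-derivative of $\B^{(1)}(b,x) = (x, B_b(x,x))$ is checked pointwise, and the parametric Transversality Theorem (Theorem \ref{thm:transversality}) immediately yields the a.e.-$b$ conclusion, with no need for a distinguished $b_*$ or any elimination-theoretic machinery.
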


The following lemma will be used in the proof of Claim \ref{cla:detDnonvanishing}, the proof of which is deferred for now. 

\begin{lemma}\label{lem:equilibria}
	For generic $b \in \Bc$, it holds that 
	\[\{ B_b(x,x) = 0\} \cap \R^{\mathcal{S}} \subset \Ec\]
	for all $\mathcal{S} \subsetneq \set{1,\dots, d}$. 
\end{lemma}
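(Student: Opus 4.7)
The plan is to reduce to the following claim for each fixed $\mathcal{T} \subsetneq \{1,\ldots,d\}$ with $|\mathcal{T}| \geq 2$: generically in $b \in \Bc$, there is no $x \in \R^d$ with $\operatorname{supp}(x) = \mathcal{T}$ satisfying $B_b(x,x) = 0$. Since any $x \in \R^{\mathcal{S}} \setminus \Ec$ has some support $\mathcal{T}$ with $\mathcal{T} \subset \mathcal{S} \subsetneq \{1,\ldots,d\}$ and $|\mathcal{T}| \geq 2$, and there are only finitely many such $\mathcal{T}$, intersecting the resulting generic sets yields the lemma. The argument will split on whether $|\mathcal{T}| = 2$ or $|\mathcal{T}| \geq 3$.

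For $|\mathcal{T}| = 2$, say $\mathcal{T} = \{j,k\}$: using $B_b(e_j,e_j) = B_b(e_k,e_k) = 0$ I would observe that $B_b(x,x) = 2x^jx^k B_b(e_j,e_k)$, and by Proposition \ref{prop:constraintClass2} it holds that $B_b(e_j,e_k) = \sum_{i \notin \{j,k\}} b^i_{jk} e_i$. This vanishes if and only if $b^i_{jk} = 0$ for all $i \notin \{j,k\}$, which is a nongeneric (positive codimension) condition on $b$.

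For $|\mathcal{T}| \geq 3$, I would apply a parametric transversality argument. Let $X_{\mathcal{T}} = \{x \in \R^d : \operatorname{supp}(x) = \mathcal{T}\}$, an open submanifold of $\R^{\mathcal{T}}$ of dimension $|\mathcal{T}|$, and view the map
\[\Psi : \Bc \times X_{\mathcal{T}} \to \R^d, \qquad \Psi(b,x) = B_b(x,x),\]
as a smooth section (in the $x$ variable, parametrized by $b$) of the rank $d-1$ vector bundle $E = \{(x,v) : x \in X_{\mathcal{T}}, \, v \in x^\perp\}$ over $X_{\mathcal{T}}$; the containment $\Psi(b,x) \in x^\perp$ is exactly the energy conservation identity. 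The key technical step is surjectivity of the linear map $c \mapsto B_c(x,x)$ onto $x^\perp$ for each $x \in X_{\mathcal{T}}$. I would prove this by explicit construction: relabel coordinates so that $x^1, x^2, x^3 \neq 0$, and given a target $v \in x^\perp$, take $c$ supported on triples $\{1,2,i\}$ with $i \geq 3$ by setting $c^i_{12} = v^i/(2x^1x^2)$ for $i \geq 3$, $c^1_{23} = v^1/(2x^2x^3)$, $c^1_{2i} = 0$ for $i \geq 4$, and $c^2_{1i} = -c^i_{12} - c^1_{2i}$ as dictated by the Jacobi identity. A direct computation then shows $B_c(x,x)_i = v^i$ for $i \geq 3$ and $B_c(x,x)_1 = v^1$, and the identity $B_c(x,x)_2 = v^2$ falls out automatically from $v \cdot x = 0$.

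Surjectivity implies that $\Psi$ is transverse to the zero section of $E$, and Theorem \ref{thm:transversality} then yields that for a.e.\ $b \in \Bc$ the preimage $\Psi(b,\cdot)^{-1}(0) \subset X_{\mathcal{T}}$ is a submanifold of expected dimension $|\mathcal{T}| - (d-1) \leq 0$. If this dimension is strictly negative the set is empty; if it equals zero the set is discrete, but the scaling identity $\Psi(b,\lambda x) = \lambda^2 \Psi(b,x)$ together with cone-invariance of $X_{\mathcal{T}}$ forces any zero to be accompanied by an entire positive half-line of zeros in $X_{\mathcal{T}}$, contradicting discreteness. The main obstacle is the surjectivity claim; note that it genuinely fails when $|\mathcal{T}| = 2$ (the image then only covers $\operatorname{span}\{e_i : i \notin \mathcal{T}\}$, a proper subspace of $x^\perp$), which is precisely why that base case must be handled separately by hand.
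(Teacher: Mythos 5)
Your proof is correct and follows the same core strategy as the paper: reduce to a fixed support set, handle $|\mathcal{T}|=2$ by hand, and use the parametric Transversality Theorem for larger supports, with the key step being surjectivity of $c\mapsto B_c(x,x)$ onto $x^\perp$. The one notable difference is how the radial/scaling degeneracy is handled: the paper phrases the transversality argument on the sphere $S^{d-1}$ with the tangent bundle $TS^{d-1}$, so that the dimension count $d-1+q-1 < 2(d-1)$ directly forces the preimage to be empty for all $q\leq d-1$, while you work with the cone $X_{\mathcal T}\subset\R^d$ and a rank-$(d-1)$ subbundle, which leaves a borderline case ($|\mathcal T|=d-1$, expected dimension zero) that you then rule out by the homogeneity argument (zeros come in rays, contradicting discreteness). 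Both are sound; the sphere formulation buys a one-line dimension count, while yours stays in Euclidean coordinates at the cost of the extra scaling observation. You also run the transversality argument down to $|\mathcal T|=3$ where the paper checks $q=3$ by hand; your explicit surjectivity construction (in particular determining $c^2_{1i}$ via Jacobi rather than specifying it, and avoiding any reliance on $x^4\neq 0$) is slightly cleaner and makes that uniform treatment possible.
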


\begin{proof}[Proof of Claim \ref{cla:detDnonvanishing}]
	{\color{red}   }
	
	Let $b \in \Bc$ be a member of the generic set obtained in Lemma \ref{lem:equilibria}. 
In pursuit of a contradiction, the assumption that $\det(D) = 0$ for all $k,m,p \in \set{1, \dots, q}$  implies 
\[\dot x^{k_1} x^{k_2} = x^{k_1} \dot x^{k_2} \]
for all $1 \leq k_1, k_2 \leq q$, hence  
\begin{align}\label{eq:logDersAllEqual}
	\frac{\dot x^1}{x^1} = \cdots = \frac{\dot x^q}{x^q} \, . 
\end{align}
Let 
\[	X = x^1\cdots x^{j-1}\dot x^jx^{j+1}\cdots x^q \, , 
\]
noting that the right-hand side is independent of $j$ by \eqref{eq:logDersAllEqual}. Plugging in the definition of $\dot x^j$, it follows that for any $j$ one has
\begin{align*}
	X &=\sum_{k_1,k_2=1}^qb^j_{k_1,k_2}x^1\cdots x^{j-1}x^{j+1}\cdots x^q x^{k_1}x^{k_2} \, , 
\end{align*}
hence 
\begin{align*}
	\frac{\| x \|^2 X}{x^1 \dots x^q} =  \sum_{j, k_1, k_2} b^j_{k_1, k_2} x^j x^{k_1} x^{k_2} = x \cdot B_b(x,x) = 0 
\end{align*}
where in the last line we invoked the energy-preservation condition of our constraint class (c.f. Definition \ref{def:CCI}). Here, $\| x \|^2$ is the usual Euclidean norm. 

{In all, the contradiction hypothesis has implied  $X=0$. By assumption $x \notin \Ec_{q}$, hence $\dot x^j=0$ for each $1 \leq j \leq q$. Since $(x, \dot x, \ddot x) \in (\R^{\mathcal{S}})^3$ and $\mathcal{S} \subset \set{1,\dots, J}$, it now follows that $\dot x = 0$. Lemma \ref{lem:equilibria} implies $x \in \Ec$, hence the  contradiction. } 
\end{proof}








\begin{proof}[Proof of Lemma \ref{lem:equilibria}]
The plan is to show that for generic $b \in \Bc$ and for all $q = 2, 3, \dots, d-1$ that 
\begin{align}\label{eq:noBadEquilibria} B_b^{-1}(0) \cap (\R^{\mathcal{S}} \setminus \Ec_{q-1}) = \emptyset  \end{align}
for all $\mathcal{S} \subsetneq \set{1, \dots, d}$ of cardinality $q$. 

When $q = 2$ or $q = 3$ this can be checked by hand. Indeed, when $q = 2$, \eqref{eq:noBadEquilibria} follows for $\mathcal{S} = \set{1,2}$ under the generic condition $b^3_{12} \neq 0$. When $q = 3$ and $\mathcal{S} = \set{1,2,3}$, say, \eqref{eq:noBadEquilibria} follows when $b^1_{23} \neq 0$. 

It remains now to check \eqref{eq:noBadEquilibria} for $3 < q < d$. Fix such a $q$ and 
consider the mapping 
\[\B^{(1)} : \Bc \times  S^{d-1} \to T S^{d-1}\]
given by 
\[{\B}^{(1)} (b, x) = (x, B_b(x,x)) = (x, \dot x) \,. \]
Here, $S^{d-1}$ is the unit sphere in $\R^d$, and $T S^{d-1}$ is the tangent bundle to $S^{d-1}$ with fibers $T_x S^{d-1} = \Span{\set{x}}^{\perp}$.

Fix a set $\mathcal{S} \subset \set{1, \dots, d}$ with $|\mathcal{S}| = q$.  
 We will check that ${\B}^{(1)} \pitchfork \Sigma_{\mathcal{S}}$, where $\Sigma_{\mathcal{S}} \subset T S^{d-1}$ is the \emph{zero section} 
\[\Sigma_{\mathcal{S}} = \{ (x,0) : x \in (S^{d-1} \cap \R^{\mathcal{S}}) \setminus \Ec_{q-1}\} \]
of the submanifold $(S^{d-1} \cap \R^{\mathcal{S}}) \setminus \Ec_{q-1}$. From this it will follow from Theorem \ref{thm:transversality} that $B^{(1)}_b : S^{d-1} \to T S^{d-1}$ is transversal to $\Sigma_{\mathcal{S}}$ for generic $b \in \Bc$, where $B^{(1)}_b(x) := (x, B_b(x,x))$. Now, $B^{(1)}_b \pitchfork \Sigma_{\mathcal{S}}$ implies that the range of $B^{(1)}_b$ is disjoint from $\Sigma_{\mathcal{S}}$. Indeed, if $x \in (B^{(1)}_b)^{-1} (\Sigma_{\mathcal{S}})$, then 
\[D_x B^{(1)}_b(T_x S^{d-1} ) + T_{(x,0)} \Sigma_{\mathcal{S}} = T_{(x,0)} (T S^{d-1}) \, , \]
which is a contradiction since the RHS has dimension $2(d-1)$ while the LHS has dimension $\leq d-1 + q - 1 < 2 (d-1)$. The generic set of $\Bc$ in Lemma \ref{lem:equilibria} is now obtained by imposing each of the (finitely many) generic conditions corresponding to each $\mathcal{S}$. 

It remains to prove $\B^{(1)} \pitchfork \Sigma_{\mathcal{S}}$. For the sake of simplicity let us assume in the following transversality argument that $\mathcal{S} = \set{1,2, \dots, q}$ -- the argument in the general case is identical up to relabeling of indices.  Suppose $(b, x) \in (\B^{(1)})^{-1} (\Sigma_{\mathcal{S}})$. 
We seek to show that for all $(s, t) \in T_{(x, 0)}(T S^{d-1})$ there exist $(c, y) \in \Bc \times T_x S^{d-1}$ and $(v, 0) \in T_{(x, 0)} \Sigma_{\mathcal{S}}$ such that 
\[D_{(x, b)} \hat \B^{(1)}(c, y) + (v, 0) = (y, B_c(x,x) + 2 B_b(x, y)) + (v_1, 0) =  (s, t) \, . \]
To this end, set $v = 0, y = s$; it remains to specify $c \in \Bc$ so that 
\[B_c(x,x) = \bar t := t - 2 B_b (x, y) \,. \]
Writing $\bar t = (\bar t^i)$, we choose $c \in \Bc$ according to 
\[c^i_{12} = (x^1 x^2)^{-1} \bar t^i \quad \text{ for } i \geq 4\]
and 
\begin{align}
	c^{1}_{23} & = (x^2 x^3)^{-1} \bar t^1 \\ 
	c^2_{13} & = (x^1 x^3)^{-1} \bar t^2 \\ 
	c^3_{14} & = (x^1 x^4)^{-1} \bar t^3 \, .
\end{align}
Finally, all remaining coefficients $c^i_{jk}$ not specified above are set to 0. 
Note that division by $x^i x^j, i,j \in \mathcal{S} = \set{1, \dots, q}$ is defined, since $x \in \R^{\mathcal{S}} \setminus  \Ec_{q-1}$ implies $x^1, \dots, x^q \neq 0$. Also, note that the above coefficients only involve a single instance each of the triples $(1,2, i), i \geq 4$ and $(1,3,4)$, and exactly two instances of $(1,2,3)$. This ensures compatibility with the Jacobi relation \eqref{eq:jacobiIdentity}, c.f. Remark \ref{eq:specifyFromJacobiIdentity}. With these assignments, $B_c(x,x) = \bar t$, as desired, and the proof is complete.

\end{proof}

\section{Generic nonlinearities with time switching}
\label{sec:timedependent}
In this section we will prove Theorem~\ref{thm:mainswitched}. Throughout, we fix a coefficient $b_* \in \mathring{\Bc}$, an open ball $\mathcal{S} \subseteq \mathring{\Bc}$ containing $b_*$ that is compactly contained in $\mathring{\Bc}$, a damping matrix $A$ with $\mathrm{ker}A :=\Kc = \mathrm{span}\{e_1, \ldots, e_J\}$, and noise coefficients $\{\sigma_i\}_{i=1}^d$ satisfying $|\{i:\sigma_i \neq 0\}| \ge 2$. Then, $\{\Phi_n\}_{n\in \N}$ denotes the corresponding Markov chain defined in Section~\ref{sec:introswitching} with $I = [1/2,3/2]$, and we will write $\mathcal{P}$ for the associated Markov semigroup. While Theorem~\ref{thm:mainswitched} allows for any $J < d$, for concreteness we will assume $J = d-1$ and thus set $\Kc = \mathrm{span}\{e_1,\ldots, e_{d-1}\}$ for the entire section. For future use, we mention that the action of $\mathcal{P}$ on a measurable function $f:\R^d \to \R$ is given by 
\begin{equation}\label{eq:discretesemigroup}
	\mathcal{P}f = \int_{I} \int_{\mathcal{S}} P_t^b f \text{ }m_\mathcal{S}(\dee b) \dt,
\end{equation}
where $m_\mathcal{S}$ is normalized Lebesgue measure on $\mathcal{S}$ and $P_t^b$ is the Markov semigroup generated by \eqref{eq:SDE} with $A$ and $\{\sigma_i\}_{i=1}^d$ as above and $B=B_b$.

The sufficient condition that we will use to prove existence of a stationary measure for $\mathcal{P}$ is given by the lemma below and follows easily from a well-known existence criterion for discrete-time Markov chains. 

\begin{lemma} \label{lem:discreteexistence}
	Let $V(x) = 1+|x|^2$. Suppose that there exists $R > 1$ and $\alpha > 0$ such that for all $|x| \ge R$ there holds 
	\begin{equation} \label{eq:DiscreteKBgoal}
		\mathcal{P}^2 V(x) - V(x) \le -\alpha \frac{|x|}{\log(|x|)}.
	\end{equation} 
Then, $\mathcal{P}$ has at least one stationary measure.
\end{lemma}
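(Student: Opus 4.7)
The plan is to treat \eqref{eq:DiscreteKBgoal} as a Foster-Lyapunov drift inequality for the iterated Markov operator $\mathcal{P}^2$, run the usual Krylov-Bogoliubov argument to produce a $\mathcal{P}^2$-invariant probability measure, and then symmetrize to obtain one invariant under $\mathcal{P}$. Two preliminary observations are needed. First, $\mathcal{P}^k V$ is locally bounded for every $k \ge 0$: applying the energy estimate \eqref{eq:energyestimate} to \eqref{eq:SDE} with $B = B_b$ (valid for each $b \in \mathcal{S}$ since $A$ is positive semidefinite) yields $P_t^b V(x) \le V(x) + t \sum_i \sigma_i^2$, and integrating over $(b,t) \in \mathcal{S} \times I$ via \eqref{eq:discretesemigroup} gives $\mathcal{P} V(x) \le V(x) + C$ for a constant $C$ depending only on $\{\sigma_i\}$ and $|I|$; iterating yields $\mathcal{P}^2 V \le V + 2C$ globally. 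Second, $\mathcal{P}$ is Feller on $C_b(\mathbb{R}^d)$: this follows from joint continuity of the random flow $\varphi^t_{(\omega,b)}(x)$ in $(x,b,t)$, which holds since $B_b$ depends smoothly on both $x$ and $b$ and \eqref{eq:SDE} has additive noise.

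Combining \eqref{eq:DiscreteKBgoal} with the bound $\mathcal{P}^2 V - V \le 2C$ on $\{|x|\le R\}$, I obtain a global drift inequality of the form
\begin{equation*}
\mathcal{P}^2 V(x) - V(x) \;\le\; -h(x) + K\,\mathbf{1}_{\{|x|\le R\}}(x), \qquad x \in \mathbb{R}^d,
\end{equation*}
for a suitable constant $K$ and any positive continuous coercive function $h$ satisfying $h(x) \le \alpha|x|/\log|x|$ for large $|x|$ (for instance $h(x) = \alpha|x|/\log(2+|x|)$ with $K$ adjusted accordingly). Applying $\mathcal{P}^{2k}$ to both sides and summing the telescoping series from $k = 0$ to $n-1$ gives
\begin{equation*}
\sum_{k=0}^{n-1} \mathcal{P}^{2k} h(x) \;\le\; V(x) - \mathcal{P}^{2n} V(x) + Kn \;\le\; V(x) + Kn.
\end{equation*}
Dividing by $n$, the time-averaged measures $\nu_n(x,\cdot) := \tfrac{1}{n}\sum_{k=0}^{n-1} (\mathcal{P}^{2k})^*\delta_x$ satisfy $\int h\,d\nu_n \le V(x)/n + K$, and since $h$ has compact sublevel sets the family $\{\nu_n\}_{n \ge 1}$ is tight by Markov's inequality.

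By Prokhorov's theorem some subsequence $\nu_{n_j} \to \mu$ weakly, and the Feller property of $\mathcal{P}^2$ together with the identity $\nu_n - (\mathcal{P}^2)^*\nu_n = \tfrac{1}{n}\bigl(\delta_x - (\mathcal{P}^{2n})^*\delta_x\bigr)$ imply via the usual Krylov-Bogoliubov argument that $\mu$ is $\mathcal{P}^2$-invariant. To pass from $\mathcal{P}^2$-invariance to $\mathcal{P}$-invariance I would set $\bar\mu := \tfrac12(\mu + \mathcal{P}^*\mu)$ and verify directly that
\begin{equation*}
\mathcal{P}^*\bar\mu \;=\; \tfrac12\bigl(\mathcal{P}^*\mu + (\mathcal{P}^*)^2\mu\bigr) \;=\; \tfrac12(\mathcal{P}^*\mu + \mu) \;=\; \bar\mu,
\end{equation*}
completing the proof. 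This is essentially a textbook application of the Foster-Lyapunov recipe and I anticipate no serious obstacle; the only points worth any care are verifying the Feller property of $\mathcal{P}$ (which requires smoothness of $\varphi^t_{(\omega,b)}$ in the additional parameter $b$ over which \eqref{eq:discretesemigroup} averages) and absorbing the values of $\mathcal{P}^2 V - V$ on the compact set $\{|x|\le R\}$ into the constant $K$.
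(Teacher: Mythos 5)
Your proposal is correct and follows essentially the same route as the paper: both use the energy estimate \eqref{eq:energyestimate} to bound $\mathcal{P}^2 V - V$ globally, combine this with \eqref{eq:DiscreteKBgoal} and the compactness of the sublevel sets of $|x|/\log|x|$ to run a Krylov--Bogoliubov-type argument producing a $\mathcal{P}^2$-invariant measure $\mu$, and then symmetrize via $\tfrac12(\mu + \mathcal{P}^*\mu)$. The only difference is that the paper delegates the tightness/Feller/telescoping step to a cited lemma from \cite{HerzogMatt24}, whereas you write it out explicitly (including the Feller property of $\mathcal{P}$, which the paper leaves implicit).
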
 

\begin{proof}
	By \eqref{eq:energyestimate}, there exists a constant $C>0$ such that for all $b \in \mathcal{S}$ and $t \in I$ we have 
	\begin{equation} \label{eq:Ptbenergy}
		P_t^b V(x) \le V(x) + C
	\end{equation}
	for every $x \in \R^d$.
	Therefore, from \eqref{eq:discretesemigroup} we have 
	\begin{equation}\label{eq:P2energy}
		\mathcal{P}^2 V(x) \le V(x) + 2C \quad \forall x\in\R^d.
	\end{equation}
	 Combining \eqref{eq:P2energy} and \eqref{eq:DiscreteKBgoal} it follows that 
	\begin{equation} \label{eq:P2general}
		\mathcal{P}^2 V(x) - V(x) \le -\alpha \frac{|x|}{1+\log(|x|+1)} + R + 2C \quad \forall x \in \R^d.
	\end{equation}
	Since the sub-level set
	$$
\left\{x \in \R^d: \frac{|x|}{1+\log(|x|+1)} \le K \right\}
	$$
	is compact for every $K > 0$, the bound \eqref{eq:P2general} implies that $\mathcal{P}^2$ has at least one stationary measure $\mu$. This follows from a straightforward generalization of the Krylov-Bogoliubov argument (see e.g. \cite[Lemma 2.7]{HerzogMatt24}). The probability measure 
	$$\nu = \frac{1}{2}\left(\mu + \mathcal{P}^* \mu\right)$$
	is then stationary for $\mathcal{P}$, where $\mathcal{P}^*$ denotes the dual action of $\mathcal{P}$ on measures.
\end{proof}

The plan for the remainder of this section is now to verify hypothesis \eqref{eq:DiscreteKBgoal} in Lemma~\ref{lem:discreteexistence}. To this end, we will require essentially two ingredients. First, we need a statement that plays the role of the dynamical assumption (3) used earlier to prove that trajectories starting away from $\Ec$ quickly experience damping, but which leverages the switched coefficients to be valid when just one mode is damped. This will be provided by Lemma~\ref{lem:SwitchingTransverse}. Second, in order to make use of Lemma~\ref{lem:SwitchingTransverse}, we need to show that the switches in the nonlinearity occur with sufficiently high probability when trajectories are not too close to $\Ec$. This will be the content of Lemma~\ref{lem:KdeltaTime} and follows from a suitable application of our earlier exit time estimates. 

We begin with our modified version of assumption (3), which will rely again on the Transversality Theorem. In what follows, we write $\varphi_b$ for flow map associated with the ODE $\dot{x} = B_b(x,x)$. Recall also the definition of the compact set $\Kd$ given at the beginning of Section~\ref{sec:exittimes}.

\begin{lemma} \label{lem:SwitchingTransverse}

Let $\Kc = \mathrm{span}\{e_1,\ldots, e_{d-1}\}$. For every $0 < \delta \ll 1$ there exists some $J_\delta \in \N$ and $c_\delta > 0$ with the following property. For every $x \in \Kd$ there exists a Borel measurable set $\mathcal{S}_x \subseteq \mathcal{S}$ with $m_{\mathcal{S}}(\mathcal{S}_x) \ge 1/2$ and such that for every $b \in \mathcal{S}_x$ we have
\begin{equation}
	\left| \Pi_\mathcal{K}^\perp \frac{d^j}{dt^j}\varphi_b(t,x)_{t=0} \right| \ge c_\delta
\end{equation}
for some $j \le J_\delta$.
\end{lemma}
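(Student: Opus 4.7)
The plan is to exploit the observation from Section~\ref{sec:introswitching} that for any $\tilde x \in \Kc \setminus \Ec$ the Jacobian of $b \mapsto B_b(\tilde x, \tilde x)$ surjects onto $\Kc^\perp$. Since $J = d-1$ in the present setting, $\Kc^\perp = \Span(e_d)$ is one-dimensional and only the $j=1$ derivative will be needed; in fact $J_\delta = 1$ will suffice. The main content is to show that the linear functional $L_{\tilde x} : \Bc \to \R$ defined by $L_{\tilde x}(b) := (B_b(\tilde x, \tilde x))^d$ admits a uniform lower bound on $\mathcal{S}$ off a set of small measure as $\tilde x$ ranges over points of $\Kc$ bounded away from $\Ec$.

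Fix a parameter $\eta \in (0, \delta/4)$ to be chosen later. For $x \in \Kd$ with $|\Pi_\Kc^\perp x| \ge \eta$, the conclusion holds trivially with $j = 0$ and $\mathcal{S}_x = \mathcal{S}$. The substantive case is $|\Pi_\Kc^\perp x| < \eta$, where I take $j = 1$ and $\mathcal{S}_x = \{b \in \mathcal{S} : |(B_b(x,x))^d| \ge c_\delta\}$. Setting $\tilde x := \Pi_\Kc x$, choosing $\eta$ sufficiently small guarantees that $\tilde x$ lies in the compact set
\[\bar \Kc_\delta := \{y \in \Kc : 1/4 \le |y| \le 2 \text{ and } \mathrm{dist}(y, \Ec) \ge \delta/2\} \subset \Kc \setminus \Ec \, . \]
Using $B_b(e_d, e_d) = 0$ and bilinearity, one obtains $(B_b(x,x))^d = L_{\tilde x}(b) + O(\eta)$ where the implicit constant is uniform for $b$ in the bounded set $\mathcal{S}$.

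The functional $L_{\tilde x}(b) = 2\sum_{1 \le j < k \le d-1} b^d_{jk}\, \tilde x^j \tilde x^k$ (noting that the constraint \eqref{eq:noDoubleEntries2} forces $b^d_{dj} = 0$) is a nonzero element of $\Bc^*$ for each $\tilde x \in \bar \Kc_\delta$: one picks indices $j_0 \ne k_0$ in $\{1,\ldots,d-1\}$ with $\tilde x^{j_0}, \tilde x^{k_0} \ne 0$ (possible since $\tilde x \notin \Ec$) and explicitly constructs $c \in \Bc$ supported on the single Jacobi triple $\{d, j_0, k_0\}$ (see Remark~\ref{eq:specifyFromJacobiIdentity}) for which $L_{\tilde x}(c) = 2\tilde x^{j_0} \tilde x^{k_0} \ne 0$. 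Since $\tilde x \mapsto L_{\tilde x}$ is continuous into $\Bc^*$ and nonvanishing on the compact set $\bar \Kc_\delta$, there exists $\kappa_\delta > 0$ with $\|L_{\tilde x}\|_{\Bc^*} \ge \kappa_\delta$ uniformly for $\tilde x \in \bar \Kc_\delta$.

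Consequently, for any $c > 0$ the sublevel set $\{b \in \Bc : |L_{\tilde x}(b)| < 2c\}$ is a slab perpendicular to the Riesz representative of $L_{\tilde x}$ of thickness at most $4c/\kappa_\delta$. Since $\mathcal{S}$ is a fixed open ball in $\Bc$, choosing $c_\delta^0 > 0$ small enough (depending only on $\delta$, $\kappa_\delta$, and $\mathcal{S}$) yields $m_\mathcal{S}(\{|L_{\tilde x}| < 2c_\delta^0\}) \le 1/4$ uniformly in $\tilde x \in \bar \Kc_\delta$. Picking $\eta$ sufficiently small relative to $c_\delta^0$ so that the $O(\eta)$ error is bounded by $c_\delta^0$, and setting $c_\delta := \min(\eta, c_\delta^0)$, the inequality $|(B_b(x,x))^d| \ge c_\delta$ holds on $\{b \in \mathcal{S} : |L_{\tilde x}(b)| \ge 2c_\delta^0\}$, which has $m_\mathcal{S}$-measure at least $3/4 > 1/2$, completing the proof with $J_\delta = 1$. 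The main obstacle is obtaining the uniform $\kappa_\delta > 0$: this reduces to nontriviality of $L_{\tilde x}$ in $\Bc^*$ via the explicit Jacobi-compatible construction above, followed by a continuity-and-compactness argument over $\bar \Kc_\delta$.
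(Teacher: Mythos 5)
Your proof is correct and takes a genuinely different route from the paper's. The paper fixes $x \in \Kd$, considers the map $F(t,b) = \varphi_b(t,x)$ on a small time window $(t_0, 2t_0) \times \mathcal{S}$, and applies the Transversality Theorem (using Gr\"onwall estimates on $D_b F$ to establish transversality to $\Kc$); analyticity then yields some nonvanishing derivative at $t=0$ for a.e.\ $b$, after which continuity-of-measure and a Lipschitz-plus-compactness argument over $\Kd$ produce the uniform $J_\delta$ and $c_\delta$, with $J_\delta$ a priori unknown. You instead observe that $\Kc^\perp = \Span(e_d)$ is one-dimensional in the $J = d-1$ case and that $b \mapsto (B_b(x,x))^d$ is a \emph{linear} functional on $\Bc$, so the whole problem collapses to a nonvanishing-off-a-thin-slab estimate, giving $J_\delta = 1$ explicitly. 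This is simpler, quantitatively sharper, and makes the mechanism transparent. Two minor observations: first, the error $(B_b(x,x))^d - L_{\tilde x}(b)$ is in fact \emph{exactly zero}, not merely $O(\eta)$, since by \eqref{eq:noDoubleEntries2} one has $b^d_{dj} = b^d_{dd} = 0$ and so all cross-terms involving $x^d$ vanish; this removes the need for the final tuning of $\eta$ against $c_\delta^0$ entirely, and one could take $\eta = \delta/4$ outright. Second, the transversality-based approach the paper uses here is arguably chosen for uniformity of technique with Proposition~\ref{prop:genericPassthrough} rather than out of necessity; your argument demonstrates that for this lemma the heavier machinery is not needed.
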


\begin{proof}
	
We begin by using a transversality argument to prove that for each fixed $x \in \Kd$ it holds that for almost every $b \in \mathcal{S}$ there exists some $j \in \N$ such that 
\begin{equation} \label{eq:modifiedtransversality}
	\Pi_{\Kc}^\perp \frac{d^j}{dt^j} \varphi_b(t,x)|_{t=0} \neq 0.
\end{equation}
Fix $x \in K_\delta$. For $t_0 \in (0,1)$ to be chosen sufficiently small, define the map $F:(t_0,2t_0) \times \mathcal{S} \to \R^d$ by $F(t,b) = \varphi_b(t,x)$. We will show that $F$ is transversal to $\mathcal{K} \subseteq \R^d$. To prove this, we must show that for any $(\bar{t}, \bar{b}) \in (t_0,2t_0) \times \mathcal{S}$ such that $F(\bar{t},\bar{b}) \in \mathcal{K}$, there exists some $a \in T_{\bar{b}}\mathcal{S}$ such that $\Pi_{\mathcal{K}}^\perp[D_b F(\bar{t},\bar{b})](a) \neq 0$. For any $a \in T_{\bar{b}}\mathcal{S}$, $t \mapsto [D_b \varphi_{\bar{b}}(t,x)](a)$ solves
 \begin{equation} \label{eq:DbSystem1}
 	\begin{cases}
 		\frac{d}{dt} [D_b \varphi_{\bar{b}}(t,x)](a) = B_a(\varphi_{\bar{b}}(t,x),\varphi_{\bar{b}}(t,x)) + 2 B_{\bar{b}}([D_b \varphi_{\bar{b}}(t,x)](a),\varphi_{\bar{b}}(t,x)), \\ 
 		[D_b \varphi_{\bar{b}}(0,x)](a) = 0.
 	\end{cases}
 \end{equation} 
It follows then by Gr\"{o}nwall's lemma that 
 \begin{equation} \label{eq:DbSystem2}
 	\sup_{0 \le t \le 2t_0}|[D_b \varphi_{\bar{b}}(t,x)](a)| \leqc  t_0.
 	\end{equation}
 It is also easy to see that 
 \begin{equation} \label{eq:DbSystem3}
 	\sup_{0 \le t \le 2t_0}|B_a(\varphi_{\bar{b}}(t,x),\varphi_{\bar{b}}(t,x)) - B_a(x,x)| \leqc t_0.
 \end{equation}
Integrating \eqref{eq:DbSystem1} and then using \eqref{eq:DbSystem2} and \eqref{eq:DbSystem3} we find
\begin{equation}\label{eq:DbSystem4}
	|[D_b \varphi_{\bar{b}}(\bar{t},x)](a) - \bar{t} B_a(x,x)| \leqc t_0^2.
\end{equation}
Since $x \in K_\delta$, either $|x_d| \gtrsim \delta$ or there exist $1\le i < j < d$ such that $|x_i|,|x_j| \gtrsim \delta$. In the first case, there exists $0 < t_1 \ll \delta$ such that $F$ and $\mathcal{K}$ can have no intersection for $t_0 \le t_1$. We may thus assume the latter case. We choose then $a^d_{ij} = 1$ and all other elements zero, which gives $|\Pi_{\mathcal{K}}^\perp B_a(x,x)| \gtrsim \delta^2$. As this choice of $a$ and the implicit constant in \eqref{eq:DbSystem4} are both independent of $(\bar{t}, \bar{b}) \in (t_0, 2t_0) \times \mathcal{S}$, it follows from \eqref{eq:DbSystem4} that there exist constants $C, c > 0$ that do not depend on $\bar{b}$ or $\bar{t}$ such that
$$|\Pi_{\Kc}^\perp D_b F(\bar{t}, \bar{b})| =  |\Pi_{\mathcal{K}}^\perp[D_b \varphi_{\bar{b}}(\bar{t},x)](a)| \ge c \delta^2 t_0 - Ct_0^2.$$
Therefore, $F$ is transversal to $\Kc$ provided that $t_0 < \min(t_1,c\delta^2/2C)$. By the Transversality Theorem, it follows that for almost every $b \in \mathcal{S}$, if $t \in (t_0,2t_0)$ is such that $\varphi_b(t,x) \in \mathcal{K}$, then 
$$ \Pi_{\mathcal{K}}^\perp\frac{d}{dt} \varphi_b(t,x) \neq 0.$$
Thus, for almost every $b \in \mathcal{S}$, $t \mapsto \Pi_{\mathcal{K}}^\perp \varphi_b(t,x)$ is not identically zero on the time interval $[0,2t_0]$, and hence since $t \mapsto \Pi_{\mathcal{K}}^\perp \varphi_b(t,x)$ is analytic its zeros must be isolated. The claim of \eqref{eq:modifiedtransversality} then follows by Taylor expanding at $t = 0$.

With \eqref{eq:modifiedtransversality} established, the lemma follows easily. For $x \in \Kd$, $\theta > 0$, and $J \in \N$, let $\mathcal{S}_{J,\theta}(x) \subseteq \mathcal{S}$ denote the set of $b \in \mathcal{S}$ such that 
$$ \left| \Pi_{\mathcal{K}}^\perp \frac{d^j}{dt^j} \varphi_b(t,x)\big|_{t=0}\right| \ge \theta$$
for some $0 \le j \le J$. By \eqref{eq:modifiedtransversality} and the continuity of measure, for any $x \in \Kd$ there exists some $J_x \in \N$ and $\theta_x > 0$ such that $m_{\mathcal{S}}(\mathcal{S}_{J_x,\theta_x}(x)) > 1/2$. For a fixed $j \in \N$, the function $x \mapsto \frac{d^j}{dt^j} \varphi_b(t,x)\big|_{t=0}$ is Lipschitz continuous uniformly in $b \in \mathcal{S}$. This implies that for any $x \in \Kd$ there exists some $\epsilon_x > 0$ such that $m_{\mathcal{S}}(\mathcal{S}_{J_x,\frac{\theta_x}{2}}(y)) > 1/2$ for all $|y-x|< \epsilon_x$. The lemma now follows from the compactness of $\Kd$. 
\end{proof}

We now turn to our statement describing how solutions of \eqref{eq:SDE} spend sufficient time away from $\Ec$. We first record a version of Lemma~\ref{lem:scaledexit} that modifies slightly the set $\Kd$ and makes clear that the relevant constants are uniform in $b \in \mathcal{S}$. In what follows, we write $x_t^{(\epsilon,b)}$ for the solution of \eqref{eq:scaledSDE} with $B = B_b$ and $P_t^{(\epsilon,b)}$ for the associated Markov semigroup.

\begin{lemma} \label{lem:uniformstopping}
For $\delta \in (0,1)$ and $b \in \Bc$, let 
$$\tilde{\Kd} = \{x \in \R^d: \mathrm{dist}(x,\Ec) \ge \delta \text{ and } 3/4 \le |x| \le 5/4\}$$
and 
$$\tau_\delta^{(\epsilon,b)}(x_0, \omega) = \inf\{t \ge 0: x_t^{(\epsilon,b)} \in \tilde{\Kd} \}$$
There exist $\delta, c, C > 0$ such that for all $\epsilon$ sufficiently small, $b \in \mathcal{S}$, and $x_0 \in  \{4/5 \le |x| \le 6/5\}\setminus \tilde{\Kd}$ we have 
\begin{equation}
	\P(\tau_\delta^{(\epsilon,b)}(x_0, \omega) \le C|\log \epsilon|) \ge c.
\end{equation}
\end{lemma}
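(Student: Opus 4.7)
The plan is to revisit the proof of Lemma~\ref{lem:scaledexit} and verify two things: (i) every constant that appeared there can be chosen uniformly in $b \in \mathcal{S}$, and (ii) the small modifications in the radial thresholds (initial conditions in the annulus $\{4/5 \le |x| \le 6/5\}$ and target set $\tilde{\Kd}$ with $3/4 \le |x| \le 5/4$) can be absorbed without essential change. The main technical point is (i); the geometric modification in (ii) is handled using the energy conservation estimate of Lemma~\ref{lem:energyconservation} to confine trajectories to the appropriate annulus on the $|\log \epsilon|$ timescale.

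For the uniformity in $b$, I would observe that $\overline{\mathcal{S}}$ is a compact subset of the open set $\mathring{\Bc}$, and hence the dynamical conditions (1)--(3) hold uniformly on $\overline{\mathcal{S}}$. Concretely, for each $\bar x \in \Ec \cap \S^{d-1}$, the spectral decomposition \eqref{eq:linearsubs} of $L_{\bar x}$ and the exponential rates in \eqref{eq:expdecay} depend continuously on $b$ and are therefore uniformly controlled for $b \in \mathcal{S}$; this makes the constants in Lemmas~\ref{lem:centerstable} and~\ref{lem:unstablepert} uniform. For the hypoelliptic smoothing estimate of Lemma~\ref{lem:densitysmoothing}, the key ingredient is Lemma~\ref{lem:generalhormander}, whose constants depend on upper bounds for $B_b - \epsilon A x$, $\sigma_1 e_1, \ldots, \sigma_d e_d$ in a suitable $C^k$ norm together with a uniform lower bound on the determinants of a finite family of H\"ormander brackets. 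Both quantities vary continuously in $b$ and, by condition (2), are uniformly controlled on compact subsets of $\mathring{\Bc}$, yielding uniform constants $C$ and $p$ in Lemma~\ref{lem:densitysmoothing}.

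For the radial modification, I would work near each $\bar x \in \Ec \cap \S^{d-1}$ with the thickened neighborhood
\begin{equation*}
\tilde{V}_\delta = \{x \in B_\delta^s + B_\delta^u + E_c : 5/8 < |x| < 11/8 \text{ and } \Pi_c x \cdot \bar x \ge 0\},
\end{equation*}
chosen so that the intersection of the initial annulus $\{4/5 \le |x| \le 6/5\}$ with a $\delta$-neighborhood of $\bar x$ is contained in $\tilde{V}_\delta$, while $\tilde{V}_\delta$ itself is strictly interior to the target annulus $\{3/4 \le |x| \le 5/4\}$ in the radial direction. Applying Lemma~\ref{lem:energyconservation} and Remark~\ref{rem:energycorollary} (whose constants are also uniform in $b$), we ensure with probability at least $1 - \epsilon$ that $|x_t^{(\epsilon,b)}|$ stays inside $[3/4, 5/4]$ throughout the interval $[0, C|\log \epsilon|]$. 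Combined with the uniform-in-$b$ center-stable manifold argument from the proof of Lemma~\ref{lem:scaledexit}, this forces the trajectory to leave $\tilde{V}_\delta$ across its unstable side rather than through a radial boundary, at which point the exit point lies in $\tilde{\Kd}$ by the choice of radial thresholds.

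The main (mild) obstacle is simply the bookkeeping needed to propagate uniformity in $b$ through the multi-step proof of Lemma~\ref{lem:scaledexit} and to verify that the slightly enlarged initial-condition set and slightly shrunken target set $\tilde{\Kd}$ remain compatible with the geometric comparison between exits from $\tilde{V}_\delta$ and entries into $\tilde{\Kd}$. Once these are handled, the constants $\delta, c, C$ produced are independent of $b \in \mathcal{S}$, giving the claim.
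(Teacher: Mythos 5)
Your proposal is correct and takes essentially the same approach as the paper: dismiss the radial modification as routine, then trace through the proof of Lemma~\ref{lem:scaledexit} identifying every constant that could depend on $b$ (the $C^k$ bounds on $B_b$, the spectral gap and change-of-basis bounds for $L_{\bar x}$, and the constants in Lemma~\ref{lem:generalhormander}) and argue each is uniform by compactness of $\mathcal{S}$ in $\mathring{\Bc}$ together with continuity in $b$.
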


\begin{proof} 
	It is clear that using $\tilde{\Kd}$ instead of $\Kd$ and allowing for initial conditions off of $\S^{d-1}$ is not important, and so the proof of this lemma amounts to checking that the constants $c,C,\delta > 0$ and choice of $\epsilon$ sufficiently small in Lemma~\ref{lem:scaledexit} can be chosen uniformly over $b \in \mathcal{S}$. The dependence of the constants in Lemma~\ref{lem:scaledexit} on the nonlinearity comes only from upper bounds for $B$ and its derivatives, a lower bound on the spectral gaps of the matrices $L_{\bar{x}}$ for each $\bar{x} \in \Ec \cap \S^{d-1}$, an upper bound on the change of basis matrix (and its inverse) to the real Jordan canonical form of each $L_{\bar{x}}$, and the constants in Lemma~\ref{lem:generalhormander} applied with $X = B$ and $\tilde{X} = -Ax$. That bounds on $B_b$ and its derivatives are uniform over $b \in \mathcal{S}$ is immediate from the fact that $\mathcal{S}$ is bounded. Uniform-in-$b$ bounds on the spectral gap and norm of the relevant change of basis matrix for each $\bar{x} \in \Ec \cap \S^{d-1}$ follow from well-known facts in spectral theory and that  $\mathcal{S}$ is compactly contained in $\mathring{\Bc}$.\footnote{For $b \in \Bc$ and $\bar{x} \in \Ec \cap \S^{d-1}$, let $L_{\bar{x}}^{b}$ denote the linearization of $B_b$ at the fixed point $\bar{x}$. The proof of generic hyperbolicity from Section~\ref{subsec:genercHyperbolicity3} shows that the spectrum of $L_{\bar{x}}^b$ is purely simple for every $b \in \mathring{\Bc}$ . Therefore (see e.g. \cite[Chapter 2.1.1]{kato2013perturbation}), for each $b_0 \in \mathring{\Bc}$ there exists a closed ball $\mathcal{U}_{b_0} \subseteq \mathring{\Bc}$ containing $b_0$ such that for $b \in \mathcal{U}_{b_0}$ the dimensions of the stable and unstable subspaces of $L_{\bar{x}}^b$ are constant and both the eigenvalues and eigenvectors of $L_{\bar{x}}^b$ vary continuously with respect to $b$. As the norm of the change of basis matrix and its inverse to the real Jordan canonical form depend only on the minimal angle between a complete set of real, linearly independent eigenvectors, it is clear that spectral gaps and change of basis matrices of the $L_{\bar{x}}^b$ are bounded uniformly over $\mathcal{U}_{b_0}$. Our required uniform bounds then follow by the compactness of $\mathcal{S}$.} Regarding the smoothing estimates, the proof of hypoellipticity given in Section~\ref{subsec:genericHypoellipticity} shows that the collection of vector fields $\{B_b - \epsilon Ax, \sigma_1 e_1, \ldots, \sigma_d e_d\}$ satisfies the parabolic H\"{o}rmander condition for every $\epsilon \in [0,1]$ and $b \in \mathring{\Bc}$. This is enough to imply that the constants $s$, $p$, and $C > 0$ in the functional inequality of Lemma~\ref{lem:generalhormander} (with the vector fields $X_{0,\epsilon} = B_b - \epsilon Ax$, $X_1 = \sigma_1 e_1, \ldots, X_d = \sigma_d e_d$) can be taken uniform over $\epsilon \in (0,1)$ and $b$ varying over compact subsets of $\mathring{\Bc}$.
	
\end{proof}

\begin{lemma} \label{lem:KdeltaTime}
There exists $\delta, c > 0$ so that for all $\epsilon$ sufficiently small, $b \in \mathcal{S}$, and $x_0 \in \S^{d-1}$ we have 
\begin{equation}
	\frac{1}{\epsilon^{-1}}\E \int_0^{\epsilon^{-1}} \mathbf{1}_{\Kd} (x_t^{(\epsilon,b)}) \dt \ge \frac{c}{|\log \epsilon|}.
\end{equation}
\end{lemma}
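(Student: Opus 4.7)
The plan is to partition $[0, \epsilon^{-1}]$ into $M \sim \epsilon^{-1}/|\log\epsilon|$ subintervals $I_k = [kT_\epsilon, (k+1)T_\epsilon]$ of length $T_\epsilon = (C+1)|\log\epsilon|$, where $C$ is the constant from Lemma~\ref{lem:uniformstopping}, and to show via the strong Markov property that each $I_k$ contributes expected dwell time in $\Kd$ bounded below by a positive constant. Summing then gives $\E\int_0^{\epsilon^{-1}}\mathbf{1}_{\Kd}(x_t^{(\epsilon,b)})\,dt \gtrsim M \gtrsim \epsilon^{-1}/|\log\epsilon|$, which is the claim after dividing by $\epsilon^{-1}$.

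The first step is to establish that $|x_{kT_\epsilon}^{(\epsilon,b)}|$ lies in a fixed bounded annulus with positive probability. It\^o's formula applied to $|x|^2$ combined with $Ax\cdot x \le \|A\||x|^2$ and a Gr\"onwall argument yield the two-sided energy bound $\E|x_t^{(\epsilon,b)}|^2 \in [e^{-2\|A\|}, 1+O(\epsilon)]$ uniformly in $b \in \mathcal{S}$ and $t \le \epsilon^{-1}$; a parallel computation gives $\E|x_t^{(\epsilon,b)}|^{2p} = O(1)$ for any fixed $p$. Paley--Zygmund applied to $X = |x_t|^2$ combined with higher-moment Markov inequalities then produces constants $c_1, c_2, c_3 > 0$ (depending on $A$ and $\{\sigma_i\}$ but uniform in $b$) with $\P(|x_t^{(\epsilon,b)}| \in [c_1, c_2]) \ge c_3$ uniformly in $t \in [0,\epsilon^{-1}]$ and $b \in \mathcal{S}$.

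The heart of the argument is to apply Lemma~\ref{lem:uniformstopping} on each $I_k$ via a homogeneity-based rescaling. Conditional on $|x_{kT_\epsilon}^{(\epsilon,b)}| = r \in [c_1, c_2]$, the $2$-homogeneity of $B_b$ implies that the rescaled process $\tilde{y}_s := r^{-1} x_{kT_\epsilon + s/r}^{(\epsilon,b)}$ satisfies the same rescaled SDE \eqref{eq:scaledSDE} but with parameter $\epsilon/r$ and initial data on $\S^{d-1}$. Applying Lemma~\ref{lem:uniformstopping} to $\tilde{y}$ (with its $\delta_*$) and translating back shows that $x^{(\epsilon,b)}$ enters the scaled set $r\tilde{\Kd} := \{d(x,\Ec) \ge r\delta_*,\ |x| \in [3r/4, 5r/4]\}$ within time $O(|\log\epsilon|)$ with probability at least $c$, uniformly in $r \in [c_1, c_2]$ and $b \in \mathcal{S}$. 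Choosing $\delta > 0$ small enough that $r\tilde{\Kd} \subseteq \Kd$ for all $r$ in a sub-annulus of $[c_1, c_2]$ still carrying most of the probability from the first step, a short-time continuity argument using Lemma~\ref{lem:energyconservation} and the uniform Lipschitz bound on $B_b$ over $\mathcal{S}$ shows that after the first hit the process remains in $\Kd$ for a deterministic time $t_0 > 0$ with high probability.

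Assembling these via the strong Markov property gives $\E[\int_{I_k}\mathbf{1}_{\Kd}(x_t^{(\epsilon,b)})\,dt \mid \mathcal{F}_{kT_\epsilon}] \gtrsim t_0$ on the favorable event at $kT_\epsilon$, and summing over $k = 0, \ldots, M-1$ yields the claim. The main technical obstacle is the annular mismatch: Lemma~\ref{lem:uniformstopping} is stated only for initial norms in $[4/5, 6/5]$, while the energy bounds only place $|x_{kT_\epsilon}^{(\epsilon,b)}|$ in the wider annulus $[c_1, c_2]$, and the scaled target $r\tilde{\Kd}$ fits inside the fixed annulus $[1/2, 3/2]$ of $\Kd$ only when $r \in [2/3, 6/5]$. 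Ensuring that $|x_{kT_\epsilon}^{(\epsilon,b)}|$ falls in this narrower window with a uniform positive probability for each $k$ is the most delicate part of the argument, which I would handle via a refined quantitative version of the Paley--Zygmund estimate, exploiting the sharper bound $\E|x_t^{(\epsilon,b)}|^2 \in [e^{-2\|A\|}, 1+O(\epsilon)]$ together with higher-moment Markov bounds to concentrate the distribution within $[2/3, 6/5]$.
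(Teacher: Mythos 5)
Your partitioning scheme, the use of the strong Markov property on each $|\log\epsilon|$-length block, the invocation of Lemma~\ref{lem:uniformstopping}, and the short-time continuity argument for remaining in $\Kd$ for an $O(1)$ time after the hit all match the paper's proof in structure. The homogeneity rescaling (so that Lemma~\ref{lem:uniformstopping} can be applied from initial radius $r$ via the parameter $\epsilon/r$) is a sound computation and a reasonable idea, but it buys you nothing here: as you yourself observe, the scaled target $r\tilde{\Kd}$ fits inside the fixed set $\Kd$ only for $r$ in a narrow window around $1$, so you still need the partition-point marginals concentrated in essentially the same annulus as if you had not rescaled at all.

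The genuine gap is in how you propose to obtain that concentration. The paper establishes $\mu_n^{(\epsilon,b)}(4/5\le |x|\le 6/5)\ge 1/2$ by invoking Remark~\ref{rem:energycorollary}, which rests on the \emph{pathwise} Gr\"onwall estimate of Lemma~\ref{lem:energyconservation}: conditioning on an event of small Brownian sup-norm, $|x_t^{(\epsilon,b)}|$ tracks its (conserved) deterministic value up to vanishing corrections. You instead propose to extract the concentration from $\E|x_t|^2\in[e^{-2\|A\|},1+O(\epsilon)]$ together with Paley--Zygmund and higher-moment Markov bounds. This cannot work: moment bounds of that shape place the second moment in a fixed interval but say nothing about anti-concentration. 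A distribution supported entirely at $|x|=0.3$ has all moments bounded and satisfies $\E|x|^2\ge e^{-2\|A\|}$ for $\|A\|$ of moderate size, yet puts zero mass on $[2/3,6/5]$. Paley--Zygmund gives $\P(X>a\,\E X)\ge(1-a)^2(\E X)^2/\E X^2$, which requires both $\E X$ to sit above $(4/5)^2$ (your bound permits it to be much smaller) and the normalized variance $\E X^2/(\E X)^2-1$ to be small (your bounds do not control this). Neither condition is available from the moment estimates you cite, so the crucial lower tail estimate $\P(|x_t|\ge 4/5)\ge 1/2$ does not follow. The fix is to abandon the moment/Paley--Zygmund route and use the pathwise argument: condition on the noise sup being small (a high-probability event over the relevant timescale by Brownian tail bounds), apply the Gr\"onwall inequality of Lemma~\ref{lem:energyconservation} to control $|x_t|$ directly, and deduce the marginal concentration as in Remark~\ref{rem:energycorollary}.
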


\begin{proof}
By Lemma~\ref{lem:uniformstopping}, there exist $\delta', c, C$ so that for all $\epsilon > 0$ sufficiently small, $b \in \mathcal{S}$, and $4/5 \le |x| \le 6/5$ we have $\P(\tau_{\delta'}^{(\epsilon,b)}(x,\omega) \le C|\log \epsilon|) \ge c$, where $\tau_{\delta'}^{(\epsilon,b)}$ is the first hitting time to $\tilde{\mathcal{K}_{\delta'}}$ as defined in Lemma~\ref{lem:uniformstopping}. Let $\delta = \delta'/2$, so that $\tilde{\mathcal{K}_{\delta'}} \subseteq \Kd$ and $\mathrm{dist}(\tilde{\mathcal{K}_{\delta'}}, \partial \Kd) \gtrsim \delta$. It is easy to check then that there are constants $c_1, c_2 > 0$ that do not depend on $b \in \mathcal{S}$ or $\epsilon \in (0,1)$ such that for any initial condition in $\tilde{\mathcal{K}_{\delta'}}$ we have 
\begin{equation} \label{eq:RemainOutside}
	\P(x_t^{(\epsilon,b)} \in \Kd \quad \forall 0 \le t \le c_1) \ge c_2.
\end{equation}
Let $T = C|\log \epsilon| + c_1$ and for $n \in \N$ define $T_n = nT$. Let $N$ be the largest natural number such that $N T \le \epsilon^{-1}$ and let $\mu_n^{(\epsilon,b)}$ denote the law of $x_{T_n}^{(\epsilon,b)}$. Then, we have 
\begin{equation} \label{eq:RemainOutside1}
	\epsilon \E  \int_0^{\epsilon^{-1}} \mathbf{1}_{\Kd} (x_t^{(\epsilon,b)}) \dt \ge \epsilon \sum_{n=0}^{N-1} \E \int_{T_n}^{T_{n+1}} \mathbf{1}_{\Kd} (x_t^{(\epsilon,b)}) \dt = \epsilon \sum_{n=0}^{N-1}\int_{\R^d}\left[\int_0^T (P_t^{(\epsilon,b)} \mathbf{1}_{\Kd})(x)\dt \right] \mu_n^{(\epsilon,b)}(\dx).
\end{equation}
Using \eqref{eq:RemainOutside} and the bound on $\tau_{\delta'}^{(\epsilon,b)}$ provided by Lemma~\ref{lem:uniformstopping}, it is straightforward to apply the strong Markov property to show that for any $x \in \R^d$ with $4/5 \le |x| \le 6/5$ there holds 
\begin{equation}\label{eq:RemainOutside2}
	\int_0^T (P_t^{(\epsilon,b)} \mathbf{1}_{\Kd})(x) \dt \ge c c_1 c_2.
\end{equation}
By Remark~\ref{rem:energycorollary}, for $\epsilon$ taken sufficiently small independently of $b \in \mathcal{S}$ and $x_0 \in \S^{d-1}$, one has $\mu_n^{(\epsilon,b)}(4/5 \le |x| \le 6/5) \ge 1/2$ for every $n \le N$. Thus, combining \eqref{eq:RemainOutside1} and \eqref{eq:RemainOutside2} we get 
$$\epsilon \E \int_0^{\epsilon^{-1}}\mathbf{1}_{\Kd}(x_t^{(\epsilon,b)}) \dt \ge \epsilon N \frac{cc_1c_2}{2}.$$
Since $N \approx \epsilon^{-1}/|\log \epsilon|$, the proof is complete. 
\end{proof}

We are now ready to finish the proof of Theorem~\ref{thm:mainswitched}.

\begin{proof}[Proof of Theorem~\ref{thm:mainswitched}]
	As shown earlier, it suffices to verify hypothesis \eqref{eq:DiscreteKBgoal} of Lemma~\ref{lem:discreteexistence}. Let $\delta > 0$ be as in Lemma~\ref{lem:KdeltaTime} and for $R \ge 1$ define 
	$$ \mathcal{U}_R = \{x \in \R^d: x/R \in \Kd\}.$$
We first claim that there exist $c_1, C_1 > 0$ and $R_1 \gg 1$ such that for all $R \ge R_1$ we have
	\begin{equation} \label{eq:claim1final}
		\mathcal{P} V(x_0) \le - c_1 |x_0| + V(x_0) + C_1 \quad \forall x_0 \in \mathcal{U}_{R}.
	\end{equation} 
	Let $x_0 \in \mathcal{U}_R$, $\mathcal{S}_{x_0/R} \subseteq \mathcal{S}$ be the set guaranteed by Lemma~\ref{lem:SwitchingTransverse}, and $x_t^{b}$ denote the solution of \eqref{eq:SDE} with $B=B_b$ and initial condition $x_0$. Recalling \eqref{eq:discretesemigroup} and \eqref{eq:Ptbenergy}, we have
	\begin{equation}\label{eq:finalproof1}
		\mathcal{P}V(x_0) \le (1-m_{\mathcal{S}}(\mathcal{S}_{x_0/R}))(V(x_0) + C) + \int_I \int_{\mathcal{S}_{x_0/R}} P_t^b V(x_0) m_{\mathcal{S}}(\dee b) \dt.
	\end{equation}
	With the goal of bounding the second term in \eqref{eq:finalproof1}, we now estimate $P_t^b V(x_0)$ for $(t,b) \in I\times \mathcal{S}_{x_0/R}$. First, we apply the energy inequality \eqref{eq:energyestimate} and rewrite the dissipation term $\E \int_0^t A x_s^b \cdot x_s^b \dee s$ in terms of $x_s^{(\epsilon,b)} = \epsilon x_{\epsilon s}^b$ with $\epsilon = 1/R$ to get 
	\begin{equation} \label{eq:PtbV1}
		P_t^b V(x_0) = V(x_0) - \frac{2}{\epsilon} \E \int_0^{t/\epsilon} Ax_s^{(\epsilon,b)} \cdot x_s^{(\epsilon,b)}\dee s + t \sum_{i=1}^{d} \sigma_i^2.
		\end{equation}
	Since $x_0^{(\epsilon,b)} = x_0/R \in \Kd$, $b \in \mathcal{S}_{x_0/R}$, $t \ge 1/2$ and $1/\epsilon = R \ge 2|x_0|/3$, it follows by Lemma~\ref{lem:SwitchingTransverse} and Lemma~\ref{lem:conservativediss} that for $R \gg 1$ independent of $b$ there holds
	\begin{equation} \label{eq:PtbV2}
		- \frac{2}{\epsilon} \E \int_0^{t/\epsilon} Ax_s^{(\epsilon,b)} \cdot x_s^{(\epsilon,b)}\dee s \le -\frac{4|x_0|}{3} \E \int_0^1 Ax_s^{(\epsilon,b)} \cdot x_s^{(\epsilon,b)}\dee s \le - c|x_0|,
	\end{equation}
	where $c$ is a constant that does not depend on $x_0$ or $b$. We noted here that the constant $c_\delta$ and smallness requirement on Lemma~\ref{lem:conservativediss} applied with $B = B_b$ are both uniform in $b \in \mathcal{S}$ by the analysis in \cite[Section 3]{BedrossianLiss22}. Putting \eqref{eq:PtbV2} into \eqref{eq:PtbV1}, using the resulting bound in \eqref{eq:finalproof1}, and lastly recalling that $m_{\mathcal{S}}(\mathcal{S}_{x_0/R}) \ge 1/2$ gives \eqref{eq:claim1final}.
	
	We now use \eqref{eq:claim1final} and Lemma~\ref{lem:KdeltaTime} to complete the proof. Fix $x_0 \in \R^d$ with $|x_0| \ge R$ for $R \ge R_1$ to be taken sufficiently large. Let $\{\Phi_n\}_{n \in \N}$ denote the Markov chain recalled at the beginning of this section initiated at $x_0$, and let $\nu$ denote the law of $\Phi_1$. Using that $\E \mathbf{1}_{\Kd}(x_t^{(\epsilon,b)}) = \P(x_t^{(\epsilon,b)} \in \Kd)$, it is straightforward to show with Lemma~\ref{lem:KdeltaTime}, a rescaling argument, and Chebyshev's inequality that there is a constant $c_2$ that does not depend on $x_0$ such that 
	\begin{equation} \label{eq:finalproof3}
		\nu(\mathcal{U}_{|x_0|}) \ge \frac{c_2}{\log(|x_0|)}
	\end{equation}
	for all $R$ sufficiently large. By  \eqref{eq:finalproof3}, \eqref{eq:Ptbenergy}, \eqref{eq:claim1final} and the fact that 
	\begin{equation} \label{eq:finalproof4}
		\mathcal{P}^2 V(x_0) = \int_{\R^d} \mathcal{P}V(x) \nu(\dx),
	\end{equation}
	we have 
	\begin{align*}
		\mathcal{P}^2 V(x_0) & = \int_{\mathcal{U}_{|x_0|}} \mathcal{P} V(x) \nu(\dx) + \int_{\mathcal{U}^c_{|x_0|}} \mathcal{P} V(x) \nu(\dx) \\ 
		& \le \int_{\mathcal{U}_{|x_0|}} (-c_1 |x| + V(x) + C_1)\nu(\dx) + \int_{\mathcal{U}_{|x_0|}^c} (V(x) + C) \nu(\dx) \\ 
		& = C + C_1 + \mathcal{P} V(x_0) - c_1 \int_{\mathcal{U}_{|x_0|}} |x| \nu(\dx) \\ 
		& \le V(x_0) + 2C + C_1 - \frac{c_1 |x_0|}{2} \nu(\mathcal{U}_{|x_0|}) \\ 
		& \le V(x_0) + 2C + C_1 - \frac{c_1 c_2 |x_0|}{2\log(|x_0|)}.  
	\end{align*}
	Using the negative term in the final line above to absorb the contribution from $2C + C_1$ for $R$ sufficiently large completes the proof.
\end{proof}

\addcontentsline{toc}{section}{References}
\bibliographystyle{abbrv}
\bibliography{Biblio}

\end{document}